\newtheorem*{theorema}{Theorem A}
\newtheorem*{theoremb}{Theorem B}
\newtheorem*{theoremc}{Theorem C}
\newtheorem*{theoremd}{Theorem D}
\newtheorem*{cor1}{Corollary 1}
\newtheorem*{cor2}{Corollary 2}
\newtheorem{lemma}{Lemma}[section]
\newtheorem{remark}[lemma]{Remark}
\newtheorem{prop}[lemma]{Proposition}
\begin{document}

\author{Hiroki Takahasi}
\address{Department of Mathematics,
Keio University, Yokohama,
223-8522, JAPAN} 
\email{hiroki@math.keio.ac.jp}
\urladdr{\texttt{http://www.math.keio.ac.jp/~hiroki/}}
\subjclass[2010]{37A25,37A40,37C40, 37D25, 37D35, 37E05, 82C26}
\thanks{{\it Keywords}: unimodal map; flat critical point; equilibrium state; decay of correlations; phase transition}

\thanks{This research was partially supported by 
the Grant-in-Aid for Young Scientists (A) of the JSPS, 15H05435,
 the Grant-in-Aid for Scientific Research (B) of the JSPS, 16KT0021,
 and the JSPS Core-to-Core Program ``Foundation
of a Global Research Cooperative Center in Mathematics focused on Number Theory and Geometry''.
I thank Mike Todd for fruitful discussions.}

\title[Equilibrium states at freezing phase transition] 
{Equilibrium states at freezing phase transition\\
 in unimodal maps with flat critical point}
\date{\today}

\begin{abstract}
An $S$-unimodal map $f$ with flat critical point satisfying the Misiurewicz condition
displays a freezing phase transition in positive spectrum.
We analyze statistical properties of the equilibrium state $\mu_t$ for the potential $-t\log|Df|$,
as well as how the phase transition slows down the rate of decay of correlations.
 We show that $\mu_t$ has exponential decay of correlations for all inverse temperature $t$ contained in 
 the positive entropy phase $(t^-,t^+)$. If the critical point is not too flat, 
 then the freezing point $t^+$ is equal to $1$, and the absolutely continuous invariant probability measure (acip for short) is the unique equilibrium state at the transition. 
We exhibit a case in which the acip has sub-exponential decay of correlations
 and $\mu_t$ converges weakly to the acip as $t\nearrow t^+$.
\end{abstract}
\maketitle

\section{Introduction}

It is now classical that the thermodynamic formalism for topologically mixing uniformly hyperbolic systems
yields a rather satisfactory description of the dynamics. 
The existence of a finite Markov partition allows one to ``code'' the orbits of the system by following their histories over the partition
of the phase space.
This defines a one-dimensional spin system with exponentially decaying interactions, 
and the construction of natural invariant measures can be done on a symbolic level.
According to the pioneering works of Sina{\u\i}, Ruelle and Bowen \cite{Bow75,Rue78,Sin72},
 there exists a unique equilibrium state at all inverse temperature,
the speed of convergence to equilibrium is exponential, 
and there is no phase transition. 

The purpose of equilibrium statistical physics is to understand phase transitions.
Therefore, advancing the thermodynamic formalism beyond the realm of uniform hyperbolicity is important.
For non-hyperbolic systems, nice thermodynamic properties are limited to the inside of 
 positive entropy phases, and at their boundaries occur phase transitions.
One archetypal model is  given by 
interval maps with indifferent fixed point such as the Manneville-Pomeau map \cite{PomMan80},
also known as a geometric realization of Hofbauer's model \cite{Hof77}.
For this class of maps, Prellberg and Slawny \cite{PreSla92} established exponential decay of correlations
of equilibrium states 
in positive entropy phase. At the transition there exist two ergodic equilibrium states, and
the one that is absolutely continuous with respect to the Lebesgue measure has sub-exponential decay of correlations,
from the results of Sarig \cite{Sar02}, Gou\"ezel \cite{Gou04}, Hu \cite{Hu04}.
The aim of this paper is to show the occurrence of an analogous phenomenon for certain unimodal maps.
The map we treat has a non-recurrent and flat critical point, as defined below.

Let $X=[0,1]$ and
 $f\colon X\to X$ be
{\it a unimodal map}, i.e., a $C^1$ map whose critical set $\{x\in X\colon Df(x)=0\}$ consists of a single point $c\in(0,1)$ that is the extremum. 
 {\it An $S$-unimodal map} $f$ is a unimodal map of class $C^3$
on $X\setminus\{c\}$
with negative Schwarzian derivative.
For an $S$-unimodal map $f$
 let $\mathcal M(f)$ denote the set of $f$-invariant Borel probability measures.
For each $\mu\in\mathcal M(f)$ the Kolmogorov-Sina{\u\i} entropy of $(f,\mu)$ is denoted by $h(\mu)$.
      Define {\it a geometric pressure function} $t\in\mathbb R\mapsto \mathscr P(t)$ by
$$\mathscr P(t)=\sup\left\{h(\mu)-t\int\log|Df|d\mu\colon\mu\in\mathcal M(f)\right\}.$$
Measures which attain this supremum are called {\it equilibrium states} for the potential $-t\log|Df|$.
As in the case of the uniformly hyperbolic systems,  
a good deal of information is encoded in the pressure function and the equilibrium states. 

Set
$$\chi_{\rm inf}=\inf\left\{\int\log|Df|d\mu\colon\mu\in\mathcal M(f)\right\}\ \ \text{and}\ \ 
\chi_{\rm sup}=\sup\left\{\int\log|Df|d\mu\colon\mu\in\mathcal M(f)\right\},$$
and define {\it a freezing point} and {\it a condensation point} respectively by
\begin{equation*}
t^+=\sup\{t\in\mathbb R\colon \mathscr P(t)+t\chi_{\rm inf}>0\}\ \ \text{and}\ \ 
t^-=\inf\{t\in\mathbb R\colon \mathscr P(t)+t\chi_{\rm sup}>0\}.\end{equation*}
From the result of Bruin and Keller \cite{BruKel98},
 $\chi_{\rm inf}\geq0$ holds\footnote{The proof does not use the non-flatness of the critical point.}
 provided all periodic points of $f$ are hyperbolic repelling, and
  in particular $-\infty\leq t^-<0<t^+\leq+\infty$.
By {\it the positive entropy phase} we mean the interval $(t^-,t^+)$.
Inside the positive entropy phase,
the equilibrium state has positive entropy.
At the boundary the system is ``frozen'', and outside of it any equilibrium state has zero entropy
and the pressure function is linear.

     We say $f$ satisfies {\it the Misiurewicz condition} \cite{Mis81} 
 if the following holds:
 \begin{itemize}
 \item[(M1)] $c\notin\omega(c)$, where $\omega(c)$ denotes the omega-limit set of the critical point $c$;

 \item[(M2)] 
 if $x\in X$ and $n\geq1$ are such that
 $f^n(x)=x$, then $|Df^n(x)|>1$. 
 \end{itemize}
The critical point $c$ is {\it flat} if there exists
a $C^3$ function $\ell$ on $X\setminus\{c\}$ such that:

\begin{itemize}

\item[(F1)] 
$ \ell(x)\to+\infty$ and $|D\ell(x)|\to+\infty$ as $x\to c$.
Here, $x\to c$ indicates both $x\to c+0$ and $x\to c-0$;

\item[(F2)] there exist
$C^1$ diffeomorphisms $\xi$, $\eta$ of $\mathbb R$ such that
$\xi(c)=0=\eta(f(c))$ and $|\eta\circ f(x)|=|\xi(x)|^{\ell(x)}$ for $x$ near $c$.
\end{itemize}

In the study of one-dimensional dynamical systems (iterated maps of an interval or the circle to itself), 
all critical points are often assumed to be {\it non-flat}
(See de Melo and van Strien \cite{dMevSt93} for the definition).
Although this assumption is useful, 
maps with flat critical point are neither pathological nor unimportant.
For instance, Hall \cite{Hal81} essentially used flat critical points to construct a $C^\infty$
Denjoy counterexample.
 For an $S$-unimodal map with flat critical point satisfying
the Misiurewicz condition, Benedicks and Misiurewicz \cite{BenMis89} constructed
a $\sigma$-finite invariant Borel measure that is absolutely continuous with respect to the Lebesgue measure ({\it acim} for short). 
Zweim\"uller \cite{Zwe04} proved various distributional limit theorems.
For a parametrized family of $S$-unimodal maps with flat critical point, Thunberg \cite{Thu99}
proved the existence of a positive measure set of parameters for which the corresponding maps
exhibit an exponential growth of derivatives along the orbit of the critical point.
The thermodynamic formalism for maps with non-flat critical point have now been extended to 
``cusp maps'', which in particular includes maps with flat critical point, see Dobbs 
\cite{Dob14}, Iommi and Todd \cite{IomTod10}.

We study 
an $S$-unimodal map $f$ with flat critical point satisfying the Misiurewicz condition.
For a pair of measurable functions $\phi,\psi\colon X\to\mathbb R$ their {\it correlation} 
with respect to  $\mu\in\mathcal M(f)$ is defined by 
$${\rm Cor}(\phi\circ f^n,\psi)=\int (\phi\circ f^n)\psi d\mu-\int \phi d\mu\int\psi d\mu.$$
We say $\mu$ has {\it exponential decay of correlations}
if for every $\phi\in L^\infty(\mu)$ and every H\"older continuous $\psi$ there exist $A>0$ and $\theta\in(0,1)$ such that
 ${\rm Cor}(\phi\circ f^n,\psi)\leq A\theta^n$ for every $n\geq0$. We say $\mu$ has {\it sub-exponential decay of correlations} 
if there exist $\phi$, $\psi$ for which there are no such $A$ and $\theta$.

 

 \begin{theorema}\label{theorema}
 Let $f\colon X\to X$ be a topologically transitive $S$-unimodal map with flat critical point
satisfying the Misiurewicz condition. Then the following holds:
\begin{itemize}

\item[1.] for every $t\in(t^-,t^+)$ there exists a unique equilibrium state for the potential $-t\log|Df|$
(denoted by $\mu_t$);

\item[2.] if $t\in(t^-,t^+)$ then the following holds for the measure $\mu_t$:
 for every $\eta\in(0,1]$ there exists $r\in(0,1)$ such that for every $\phi\in L^\infty$ and every
 H\"older continuous $\psi\colon X\to\mathbb R$
with H\"older exponent $\eta$, there exists a constant $C(\phi,\psi)>0$ such that
$$\left|{\rm Cor}(\phi\circ f^n,\psi)\right|\leq C(\phi,\psi)r^n\ \text{ for every $n\geq1$};$$

\item[3.] the geometric pressure function  $t\in(t^-,t^+)\mapsto\mathscr{P}(t)$ is analytic;

 \item[4.] $t^+\leq1$. If $t>t^+$ then $\mathscr{P}(t)=0$ and there exists no equilibrium 
state for the potential $-t\log|Df|$.
\end{itemize}

\end{theorema}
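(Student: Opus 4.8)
The plan is to reduce item 4 to two properties of the dynamics near the flat critical point, both of which come out of the inducing scheme (Markov extension) that underlies parts 1--3: (i) $\chi_{\rm inf}=0$; and (ii) every ergodic $\mu\in\mathcal M(f)$ has $\int\log|Df|\,d\mu>0$, i.e.\ $\chi_{\rm inf}$ is not attained. Property (i) is where flatness is used: conditions (F1)--(F2), through $\ell(x)\to+\infty$ as $x\to c$, force the branches of the inducing scheme sitting over the critical point to have return times $\tau$ that outgrow their accumulated expansion $\log|Df^{\tau}|$ (the feature that fails when $c$ is non-flat), so that concatenating such branches produces $\mu_n\in\mathcal M(f)$ with $\int\log|Df|\,d\mu_n\to 0$; combined with the Bruin--Keller bound $\chi_{\rm inf}\ge 0$ this gives $\chi_{\rm inf}=0$. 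For (ii), write $Y$ for the base of the scheme and $F=f^{\tau}\colon Y\to Y$ for the induced map, which is uniformly expanding: $|DF|\ge\Lambda>1$ on $Y$. If $\mu$ charges $Y$, lift it to an $F$-invariant probability $\bar\mu$; since $\mu$ is a probability one has $\int\tau\,d\bar\mu<\infty$, and the Abramov--Kac formula, together with $\log|DF|=\log|Df^{\tau}|$, gives
\[
\int\log|Df|\,d\mu=\frac{\int\log|DF|\,d\bar\mu}{\int\tau\,d\bar\mu}\ge\frac{\log\Lambda}{\int\tau\,d\bar\mu}>0.
\]
If $\mu$ does not charge $Y$ then $\mu$ is supported on a compact invariant set avoiding a neighbourhood of $c$ (because $f$ carries a neighbourhood of $c$ into $Y$), and Ma\~n\'e's theorem --- available under (M1)--(M2) --- yields $\int\log|Df|\,d\mu\ge\log\lambda_0>0$ for a suitable $\lambda_0>1$. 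Establishing (i), i.e.\ extracting from the geometry of the critical branches that the expansion grows slower than the return time, is the main obstacle; everything else below is formal.

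Granting (i) and (ii), the bound $t^+\le 1$ is immediate. For ergodic $\mu\in\mathcal M(f)$ one has $\int\log|Df|\,d\mu\ge\chi_{\rm inf}=0$, so the Ruelle inequality (in dimension one: $h(\mu)\le\max\{0,\int\log|Df|\,d\mu\}$) gives $h(\mu)\le\int\log|Df|\,d\mu$, whence for every $t\ge 1$
\[
h(\mu)-t\int\log|Df|\,d\mu\le(1-t)\int\log|Df|\,d\mu\le 0.
\]
Taking the supremum over ergodic $\mu$ (which computes $\mathscr P(t)$) gives $\mathscr P(t)\le 0$, hence $\mathscr P(t)+t\chi_{\rm inf}=\mathscr P(t)\le 0$, for all $t\ge 1$. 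Thus no $t\ge 1$ lies in $\{t\colon\mathscr P(t)+t\chi_{\rm inf}>0\}$, so $t^+\le 1$.

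Next fix $t>t^+$. By the definition of $t^+$ we have $\mathscr P(t)+t\chi_{\rm inf}\le 0$, hence $\mathscr P(t)\le 0$ by (i); on the other hand, using the measures $\mu_n$ from the first paragraph and $h(\mu_n)\ge 0$, $\mathscr P(t)\ge h(\mu_n)-t\int\log|Df|\,d\mu_n\ge -t\int\log|Df|\,d\mu_n\to 0$. Therefore $\mathscr P(t)=0$; the same argument applies to every $s>t^+$, so $\mathscr P\equiv 0$ on $(t^+,+\infty)$.

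It remains to rule out equilibrium states for $t>t^+$. Suppose $\mu\in\mathcal M(f)$ were one; passing to an ergodic component (again an equilibrium state) we may take $\mu$ ergodic, so $h(\mu)-t\int\log|Df|\,d\mu=\mathscr P(t)=0$. Pick $s$ with $t^+<s<t$; then $\mathscr P(s)=0$ and, by the variational principle defining $\mathscr P$, $h(\mu)-s\int\log|Df|\,d\mu\le\mathscr P(s)=0$. Subtracting the two relations gives $(t-s)\int\log|Df|\,d\mu\le 0$, hence $\int\log|Df|\,d\mu\le 0$, which contradicts (ii). Hence there is no equilibrium state for the potential $-t\log|Df|$ when $t>t^+$, completing item 4.
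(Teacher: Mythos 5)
Your proposal only addresses item 4 of Theorem A; items 1--3 are left entirely untouched, and they are the bulk of both the statement and of the paper's proof. Existence and uniqueness of $\mu_t$, the exponential decay of correlations, and the analyticity of $t\mapsto\mathscr{P}(t)$ are obtained in the paper by coding the induced map $\widehat f$ of the inducing scheme $(I,\mathcal W,R)$ as a countable Markov shift, invoking Sarig and Mauldin--Urba\'nski to get a unique Gibbs/equilibrium measure $\widehat\mu_t$ with exponential tail (Proposition \ref{uniquel}, using the counting bound of Proposition \ref{inducep}-1), projecting it by $\mathcal L$ and applying Young's tower results for the decay rate, and then --- the genuinely non-formal step --- showing that no \emph{non-liftable} measure can attain the supremum defining $\mathscr{P}(t)$, so that the projected measure really is the equilibrium state and $t_L^\pm=t^\pm$. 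That last step (Lemma \ref{unequal} together with the comparison of $\mathscr P_{\lnot L}$ with the liftable pressure of a non-flat modification $\tilde f$ having the same kneading sequence, via the Collet--Eckmann property and \cite{PesSen08}) has no counterpart in your argument; saying that the inducing scheme ``underlies parts 1--3'' does not supply it.

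Even within item 4 there is a real gap: your ingredient (i), $\chi_{\rm inf}=0$, is exactly the point where flatness must be exploited, and you defer it with the heuristic that $\ell(x)\to+\infty$ forces return times to outgrow expansion. That is not automatic. By Lemma \ref{P} the expansion of a critical branch is $|Df^{R(x)}(x)|\sim\bigl|D\ell(x)\log|x-c|+\ell(x)/(x-c)\bigr|$ while $R(x)\sim\ell(x)\log|x-c|^{-1}$, so one must control $\log|D\ell(x)|$ against $\ell(x)\log|x-c|^{-1}$; the paper does this by proving $\liminf_{x\to c-0}\frac{\log|D\ell(x)|}{\ell(x)\log|x-c|^{-1}}=0$ through an integration/contradiction argument, and then realizes the small exponent by periodic orbits through primitive pull-backs accumulating on $c$ (Lemma \ref{lack}, using topological mixing so that $\overline I\setminus\{c\}=\bigcup_{J\in\mathcal W}\overline J$). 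Without this, (i) is unproven and your computation $\mathscr{P}(t)\ge -t\chi(\mu_n)\to 0$ has no measures $\mu_n$ to feed it. Your ingredient (ii) and the formal deductions ($t^+\le1$ via Ruelle's inequality, $\mathscr{P}\equiv0$ beyond $t^+$, and the $s<t$ comparison ruling out equilibrium states) are sound and parallel the paper's Lemma \ref{positive} and Proposition \ref{lackexp}; only note that the induced map is known to expand uniformly only after some iterate $m$, so the bound should be run for $\widehat f^{\,m}$, and the Abramov/Kac identity needs the liftability statement (Lemma \ref{liftlem}) you implicitly use.
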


A measure $\mu\in\mathcal M(f)$ satisfies the Central Limit Theorem if given 
a H\"older continuous function $\varphi$ with $\int\varphi d\mu=0$
which is not a coboundary
$(\varphi\neq\psi\circ f-\psi\text{ for any }\psi\in L^2(\mu))$, there exists $\sigma>0$ such that
for any $r\in\mathbb R$,
$$
\lim_{N\to\infty}\mu\left\{x\in \mathbb R\colon\frac{1}{\sqrt{N}}\sum_{n=0}^{N-1}\varphi\circ f^n(x)<r\right\} 
=\frac{1}{\sqrt{2\pi}\sigma}\int_{-\infty}^re^{-\frac{x^2}{2\sigma^2}}dx.
$$

\begin{cor1}
Under the hypotheses of Theorem A, for each $t\in(t^-,t^+)$
the measure $\mu_t$ satisfies the Central Limit Theorem.
\end{cor1}

The existence and uniqueness of the equilibrium state as in Theorem A was proved
by Iommi and Todd  (See \cite[Appendix A]{IomTod10}\footnote{The preclusion of the case
of pre-periodic critical point in \cite{IomTod10} is not essential.}). 
Their method is more general but hides 
statistical properties of the equilibrium states.
We limit ourselves to non-recurrent maps and give more precise descriptions.

By the result of Benedicks and Misiurewicz \cite{BenMis89},
the acim for the map in Theorem A is 
unique up to a multiplicative constant, and is a finite measure if and only if $\int\log|Df(x)|dx>-\infty$. 
If this the case, then the normalization of the acim is denoted by $\mu_{\rm ac}$.
Then $t^+=1$, and the $\mu_{\rm ac}$ is the unique equilibrium state for the potential $-\log|Df|$,
see Proposition \ref{lackexp}. 

For the measure $\mu_{\rm ac}$ one cannot expect
exponential decay of correlations, see Zweim\"uller \cite[Proposition 1]{Zwe05}.
The next theorem links the rate of decay of correlation of $\mu_{\rm ac}$ to the behavior of the function $\ell$.
For two positive functions $a(x)$, $b(x)$ defined around the critical point $c$, the expression $a(x)\sim b(x)$
indicates that $a(x)/b(x)$ is bounded and bounded away from $0$.

\begin{theoremb}
Let $f\colon X\to X$ be a topologically transitive $S$-unimodal map with a flat critical point $c$
satisfying the Misiurewicz condition. Then the following holds for the measure $\mu_{\rm ac}$:

\begin{itemize}

\item[1.] {\rm (Stretched exponential decay)} if $\ell(x)\sim|\log|x-c||^\alpha$ holds for some $\alpha>0$, then 
for every $\tilde\alpha\in(0,\frac{1}{\alpha+1})$, every $\phi\in L^\infty$ and
 every H\"older continuous $\psi\colon X\to\mathbb R$
  there exists a constant $C(\phi,\psi)>0$ such that

$${\rm Cor}(\phi\circ f^n,\psi)\leq
 C(\phi,\psi)e^{-n^{\tilde\alpha}}
\quad\text{for every $n\geq1$};$$

\item[2.] {\rm (Polynomial decay)} if there exists a positive $C^3$ function $v$ on $X$ such that
$v(c)\in(0,1)$ and
 $\ell(x)=|x-c|^{-v(x)}$ for all $x$ near $c$, then 
for every $\beta\in(0,1/v(c)-1)$, every $\phi\in L^\infty$ and
 every H\"older continuous $\psi\colon X\to\mathbb R$
 there exists a constant $C(\phi,\psi)>0$ such that
  $${\rm Cor}(\phi\circ f^n,\psi)\leq C(\phi,\psi)n^{-\beta}\quad\text{for every $n\geq1$}.$$
\end{itemize}
\end{theoremb}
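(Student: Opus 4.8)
The plan is to realize $\mu_{\rm ac}$ as the projection of the physical measure of a Young tower built over an interval $Y$ bounded away from $c$, to estimate the Lebesgue tail of the inducing time, and to feed that tail into the general correspondence between return-time tails and rates of decay of correlations. \emph{Step 1: an inducing scheme.} I would fix a small interval $Y$ compactly contained in a component of $X\setminus\{c\}$ and disjoint from a neighborhood of $\omega(c)$, and take $R\colon Y\to\mathbb N$ to be a first-return (or first-hyperbolic-time) inducing time, so that $F=f^R\colon Y\to Y$ is a uniformly expanding Markov map with bounded distortion. For a topologically transitive Misiurewicz $S$-unimodal map, even with a flat critical point, such a scheme is available from the Benedicks--Misiurewicz construction \cite{BenMis89} (compare the set-up of \cite{IomTod10}): the non-recurrence (M1) of $c$, the hyperbolicity of periodic points (M2) and the negative Schwarzian derivative give uniform expansion and Koebe distortion bounds along every orbit segment that stays away from $c$. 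The tower map projects to $f$, its unique absolutely continuous invariant probability measure projects to $\mu_{\rm ac}$ (finite here, since $\int\log|Df|dx>-\infty$ in both cases under consideration), and $\mu_{\rm ac}|_Y$ has density bounded above and away from $0$, so $\mu_{\rm ac}|_Y$ and Lebesgue measure $m$ on $Y$ have $\{R>n\}$-tails of the same order. Topological transitivity of $f$ together with the non-periodicity of $c$ (forced by (M2)) makes the greatest common divisor of the values of $R$ equal to $1$.

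\emph{Step 2: the tail estimate.} The inducing time $R(x)$ is large precisely when the orbit of $x$ makes a deep excursion toward $c$ before returning to $Y$. Suppose $f^k(x)$ lies at distance $\delta$ from $c$. Since $\xi$ and $\eta$ in (F2) are $C^1$ diffeomorphisms with $\xi(c)=0=\eta(f(c))$, the point $f^{k+1}(x)$ then lies at distance comparable to $|x-c|^{\ell(x)}=e^{-\ell(x)\log(1/|x-c|)}$ from $f(c)$; by (M1) the forward orbit of $f(c)$ stays at distance $\geq\rho>0$ from $c$, and by (M2) with the negative Schwarzian derivative it is uniformly expanding at some rate $\lambda>0$. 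Hence the \emph{binding period} --- the first time after $k$ at which the orbit of $x$ leaves a fixed neighborhood of $\overline{\{f^j(c)\colon j\geq1\}}$ --- has length comparable, up to additive and multiplicative constants, to $\ell(\delta)\log(1/\delta)$ (writing $\ell(\delta)$ for the value of $\ell$ at distance $\delta$ from $c$), after which $x$ returns to $Y$ within a bounded number of further iterates; conversely, an orbit avoiding $B(c,\delta)$ returns within a time controlled by $\log(1/\delta)$. Defining $\delta_n$ by $\ell(\delta_n)\log(1/\delta_n)\asymp n$, the set $\{R>n\}$ is then contained in a uniformly bounded number of branches of $F$ carrying $Y$ into $B(c,\delta_n)$, so bounded distortion yields $m\{R>n\}\lesssim\delta_n$ with a matching lower bound. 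If $\ell(x)\sim|\log|x-c||^\alpha$ then $(\log(1/\delta_n))^{\alpha+1}\asymp n$, hence $m\{R>n\}\lesssim e^{-cn^{1/(\alpha+1)}}$ for some $c>0$; if $\ell(x)=|x-c|^{-v(x)}$ with $v(c)\in(0,1)$ then, since $v$ is continuous, $\delta_n=n^{-1/v(c)+o(1)}$, hence $m\{R>n\}\lesssim n^{-a}$ for every $a<1/v(c)$.

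\emph{Step 3: from the tail to decay of correlations.} Since $F$ is a Gibbs--Markov map whose inducing time has the tail found in Step 2 and whose values have greatest common divisor $1$, the standard Young-tower estimates \cite{You98,You99} translate these tails into decay-of-correlations bounds for $\phi\in L^\infty$ and H\"older continuous $\psi$. The stretched-exponential tail $m\{R>n\}\lesssim e^{-cn^{1/(\alpha+1)}}$ gives ${\rm Cor}(\phi\circ f^n,\psi)\leq C(\phi,\psi)e^{-n^{\tilde\alpha}}$ for every $\tilde\alpha<\frac{1}{\alpha+1}$; the polynomial tail $m\{R>n\}\lesssim n^{-a}$ with any $a<1/v(c)$ gives, by Young \cite{You99}, ${\rm Cor}(\phi\circ f^n,\psi)\leq C(\phi,\psi)n^{-(a-1)}$, which is the asserted bound $n^{-\beta}$ for every $\beta<\frac{1}{v(c)}-1$ (note $a>1$ is available precisely because $v(c)<1$).

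\emph{The main obstacle.} The crux is Step 2: pinning down the geometry of an excursion near the flat critical point --- the relation between the depth $\delta$ of the approach, the resulting length $\asymp\ell(\delta)\log(1/\delta)$ of the binding period, and the Lebesgue measure $\asymp\delta$ of the set of points in $Y$ that undertake such an excursion. This will require combining the flatness hypotheses (F1)--(F2) with the uniform hyperbolicity furnished by the Misiurewicz condition to control distortion along the whole excursion, and then keeping track of the error terms (notably the $\log(1/\delta)$-factor in the binding period and the $o(1)$-drift of the exponent $v$ in the polynomial case) carefully enough that they perturb $\tilde\alpha$, respectively $\beta$, by an arbitrarily small amount --- which is exactly why the conclusions must be stated with strict inequalities.
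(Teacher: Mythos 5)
Your overall architecture — induce, estimate the Lebesgue tail of the inducing time, then invoke Young's correspondence between tails and correlation rates — is exactly the paper's (Proposition \ref{inducep} plus \cite{You99}), and your Step 3 and the binding-time asymptotics $R\asymp\ell(\delta)\log(1/\delta)$ agree with Lemma \ref{P}. The genuine gap is in Step 2, and it stems from your choice of base: you induce on an interval $Y$ bounded away from $c$, and then claim that $\{R>n\}$ is contained in a uniformly bounded number of branches mapping $Y$ into $B(c,\delta_n)$, supported by the assertion that an orbit avoiding $B(c,\delta)$ returns to $Y$ within time $O(\log(1/\delta))$. That assertion is false: there are orbits (e.g.\ periodic orbits disjoint from $Y$) that avoid $B(c,\delta)$ and never enter $Y$, and, more damagingly, an orbit can make several \emph{moderate} excursions toward $c$, each of depth $\delta'>\delta_n$ and hence of duration $\asymp\ell(\delta')\log(1/\delta')$ — far larger than $\log(1/\delta')$ — without ever reaching depth $\delta_n$ and without touching $Y$ in between, since after release from a binding period the point sits near $\omega(c)$, not in $Y$. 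Such points lie in $\{R>n\}$ but in none of your designated branches, so the bound $m\{R>n\}\lesssim\delta_n$ is not established; one needs either a large-deviation/multiple-excursion accounting (summing over the number and depths of excursions) or a different inducing base.

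The paper's proof removes the multiple-excursion issue at the level of the construction: the inducing domain is a \emph{nice} interval $I$ containing $c$, so any close approach to $c$ is itself a return and terminates $R$. Then $\{R>n\}$ splits into the central interval $I_{\bar n}$ around $c$, whose length is estimated directly from the flatness asymptotics (giving the $e^{-(Cn)^{1/(\alpha+1)}}$ and $n^{-1/\beta}$ bounds of Proposition \ref{inducep}-2,3), and the part $\{R>n\}\cap(I\setminus I_{\bar n})$, which is shown to be exponentially small by a Koebe-space argument along the nested pull-backs $J_k^{\pm}$; this last step is itself delicate because $R$ is not monotone in the distance to $c$, which is precisely the ``careful analysis of the distribution of the first return time'' the paper carries out. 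To repair your argument you should either switch to such a base containing $c$, or add the excursion-counting estimate showing that points with $k\geq2$ moderate excursions contribute at most on the order of $\delta_{n/k}^{\,k}$ together with an exponential bound for orbits avoiding both $Y$ and a fixed neighborhood of $c$; with that supplement the rest of your proposal (comparability of $\mu_{\rm ac}|_Y$ with Lebesgue, gcd $1$, and the translation via \cite{You99} with the loss of one power, whence $\beta<1/v(c)-1$) goes through as in the paper.
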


\begin{cor2}
Let $f\colon X\to X$ be a topologically transitive $S$-unimodal map with a flat critical point $c$
satisfying the Misiurewicz condition. 
If $\ell(x)\sim|\log|x-c||^\alpha$ for some $\alpha>0$,
or there exists a positive $C^3$ function $v$ on $X$ such that
$v(c)\in(0,1/2)$ and
 $\ell(x)=|x-c|^{-v(x)}$ for all $x$ near $c$,
then the measure $\mu_{\rm ac}$ satisfies the Central Limit Theorem.
\end{cor2}

The next theorem shows one case in which
 $\mu_{\rm ac}$ has sub-exponential decay of correlations.

 \begin{theoremc}
 Let $f\colon X\to X$ be a topologically transitive $S$-unimodal map with a flat critical point $c$
satisfying the Misiurewicz condition such that 
there exists a positive $C^3$ function $v$ on $X$ such that
$v(c)\in(0,1)$ and $\ell(x)=|x-c|^{-v(x)}$ for all $x$ near $c$.
Then the following holds for the measure $\mu_{\rm ac}$: there exist $\phi\in L^\infty$, a H\"older continuous $\psi$ and
 $C(\phi,\psi)>0$ such that
 $${\rm Cor}(\phi\circ f^n,\psi)\geq C(\phi,\psi)n^{1-\frac{1}{v(c)}}\ \ \text{for every $n\geq1$}.$$
 
\end{theoremc}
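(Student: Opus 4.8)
The plan is to read off the lower bound from the first-return inducing scheme for $(f,\mu_{\rm ac})$ underlying the proofs of Theorems A and B, and to combine the resulting return-time tail with a renewal-theoretic lower bound on correlations. Let $Y$ be the inducing domain. Since all periodic points of $f$ are hyperbolic repelling and $c\notin\omega(c)$, the critical point lies at positive distance from $\overline{\{f^k(c):k\ge1\}}$, so we may and do take $Y$ to be a fixed interval at positive distance from $c$ and from the closure of the postcritical orbit; then $d\mu_{\rm ac}/dx$ is bounded above and below on $Y$. Let $R\colon Y\to\mathbb N$ be the first return time, $F=f^R$ the induced Gibbs--Markov map, and $\mu_Y$ the $F$-invariant probability measure equivalent to $\mu_{\rm ac}|_Y$; by Kac's formula $\int R\,d\mu_Y=1/\mu_{\rm ac}(Y)<\infty$. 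The proof of Theorem B(2) already supplies the \emph{upper} tail estimate $\mu_Y(\{R>n\})=O(n^{-1/v(c)})$, so two things remain: (i) the matching \emph{lower} bound $\mu_Y(\{R>n\})\gtrsim n^{-1/v(c)}$, and (ii) a bound ${\rm Cor}(\phi\circ f^n,\psi)\gtrsim\sum_{k>n}\mu_Y(\{R>k\})$ for an appropriate $\phi\in L^\infty$ and H\"older $\psi$.

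For (i) I would exploit that the flatness is now prescribed \emph{exactly} by $\ell(x)=|x-c|^{-v(x)}$. By (F2) and the $C^3$ regularity of $v$, $\log\frac{1}{{\rm dist}(f(x),f(c))}\asymp|x-c|^{-v(x)}\log\frac{1}{|x-c|}$ for $x$ near $c$; since $f$ satisfies the Misiurewicz condition, derivatives along the orbit of $c$ grow exponentially, so a point $x$ near $c$ shadows the critical orbit --- and, because $Y$ avoids $\omega(c)$ and the postcritical orbit, stays out of $Y$ --- for $\gtrsim|x-c|^{-v(x)}\log\frac{1}{|x-c|}$ iterates. Using $v\in C^3$ (so that $v(x)-v(c)=O(|x-c|)$) one checks that for $|x-c|<n^{-1/v(c)}$ this number of iterates is $\gg n$ once $n$ is large; hence $\{\,|x-c|<n^{-1/v(c)}\,\}\subseteq\{\tau_Y>n\}$, where $\tau_Y$ is the first-entry time to $Y$. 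Pulling this interval back into $Y$ through a subset of $Y$ of positive $\mu_Y$-measure whose first excursion from $Y$ passes through a fixed neighbourhood of $c$, and invoking the Koebe bounded-distortion bounds already in force, yields $\mu_Y(\{R>n\})\gtrsim n^{-1/v(c)}$. Together with the upper estimate from the proof of Theorem B(2), $\mu_Y(\{R>n\})$ is regularly varying with index $-1/v(c)$, so, since $1/v(c)>1$, $\sum_{k>n}\mu_Y(\{R>k\})\gtrsim n^{\,1-1/v(c)}$.

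For (ii) I would take $\phi=1_Y\in L^\infty$ and $\psi\ge0$ H\"older (indeed Lipschitz), supported in $Y$ and equal to a positive constant on a large subinterval of $Y$. On $Y$ the function $1_Y\circ f^n$ is the indicator of the set of $x$ for which $n$ is a return time, so the first-return decomposition expresses ${\rm Cor}(1_Y\circ f^n,\psi)$ through the renewal sequence of the return-time distribution, up to an error from the non-constant part of $\psi$ which the exponential mixing of $F$ (from the proof of Theorem A) together with the summability of the tail keep of strictly smaller order, once the subinterval on which $\psi$ is constant is taken large enough. The sharp renewal / transfer-operator asymptotics of Sarig \cite{Sar02}, Gou\"ezel \cite{Gou04} and Hu \cite{Hu04} --- applicable because topological transitivity makes the tower aperiodic and the tail is regularly varying with index $<-1$ --- then give, for this $\phi$ and $\psi$, ${\rm Cor}(\phi\circ f^n,\psi)\ge c_*\sum_{k>n}\mu_Y(\{R>k\})\ge c_*\,n^{\,1-1/v(c)}$ for all large $n$, with some $c_*>0$; shrinking the constant then covers $n\ge1$ as well, which is Theorem C.

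I expect the principal difficulty to lie in (ii) when $v(c)\in(0,\tfrac{1}{2}]$, i.e.\ when the return time has tail exponent $\ge2$ and finite second moment: there the first-order renewal expansion does not obviously carry a definite sign, and one must appeal to the precise estimates of \cite{Gou04} --- or else sidestep the issue by placing the test functions on a single high level of the tower over $Y$, which makes positivity of the leading term transparent. A secondary, more technical point is (i): turning the shadowing estimate into a statement about a set of definite proportion of $Y$ requires careful bounded-distortion bookkeeping for the branches of the induced map that pass near the flat critical point, together with a check that logarithmic corrections do not spoil the stated exponent $1-1/v(c)$.
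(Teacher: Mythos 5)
Your strategy is essentially the paper's: a two-sided polynomial tail estimate for an inducing time, with the lower bound coming from the Lebesgue measure (of order $n^{-1/v(c)}$) of the points near the flat critical point that need more than $n$ iterates to come back, followed by the sharp renewal-type lower bound of Gou\"ezel \cite[Theorem 1.3]{Gou04}. The differences are in execution. The paper induces on a nice interval $I$ \emph{containing} $c$ (the scheme of Proposition \ref{inducep}), so that $\{R>n\}$ directly contains an interval adjacent to $c$ and Proposition \ref{inducep}-3 gives $C_2 n^{-1/v(c)}\leq|\{R>n\}|\leq C_3 n^{-1/\beta}$ for every $\beta>v(c)$ with no pull-back step; summing tails and citing \cite{Gou04} finishes the proof in three lines. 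You instead induce on a set $Y$ away from $c$, which buys a bounded density for $\mu_{\rm ac}$ on the base but costs you two things the paper gets for free: you must transport the neighbourhood-of-$c$ estimate into $Y$ through a fixed branch (workable, as you say, via Koebe distortion), and you must actually justify that the first return map to $Y$ is full-branch Gibbs--Markov --- for that you need $Y$ to be \emph{nice} and disjoint from the postcritical closure, not merely at positive distance from $c$; as stated, branches of $f^R$ on $Y$ need not map onto $Y$. Two smaller overstatements: the available upper tail bound is only $O(n^{-1/\beta})$ for each $\beta>v(c)$, not $O(n^{-1/v(c)})$, and in particular regular variation of the tail is not established; but neither is needed, since with $1/v(c)>1$ and $\beta$ close to $v(c)$ the error terms in \cite{Gou04} are already $o(n^{1-1/v(c)})$, which also disposes of your worry about the sign of the first-order term when $v(c)\leq 1/2$ --- the paper simply lets \cite[Theorem 1.3]{Gou04} supply the observables and the positivity, rather than running the renewal expansion for $\phi=1_Y$ by hand.
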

Many of the existing results on decay of correlations are only concerned with upper bounds of decay rates.
Lower bounds were first obtained by Sarig \cite{Sar02} for interval maps with indifferent fixed point, followed by Gou\"ezel \cite{Gou04} and Hu \cite{Hu04}.
  To our knowledge, Theorem C is the first to have given a lower bound
for one-dimensional maps with critical point.

In view of Theorem A and Theorem C, a natural question to ask is
 if $\mu_t$ converges weakly to $\mu_{\rm ac}$
as $t\nearrow 1$. The answer is affirmative in the following particular case. 



\begin{theoremd}
Let
\footnote{Maps in Theorem D satisfies the Misiurewicz condition.
Indeed, (M1) is obvious. Since the Schwarzian derivative is negative,
all indifferent periodic points must be attracting. The topological transitivity implies
the non-existence of an attracting periodic point and (M2) holds.}
 $f\colon X\to X$ be a topologically transitive
 $S$-unimodal map with a flat critical point $c$ such that 
$f(0)=0=f(1)$ and $f(c)=1$. If
 $\ell(x)\sim|\log|x-c||^\alpha$ for some $\alpha>0$,
 or there exists a positive $C^3$ function $v$ on $X$ such that
$v(c)\in(0,1/2)$ and
 $\ell(x)=|x-c|^{-v(x)}$ 
for all $x$ near $c$,
then $\mu_t$ converges weakly to $\mu_{\rm ac}$ as $t\nearrow 1$.
\end{theoremd}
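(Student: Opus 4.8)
The plan is a weak-$*$ compactness argument. Since $\mathcal M(f)$ is weak-$*$ compact, it suffices to show that every weak-$*$ accumulation point $\mu_*$ of $\{\mu_t\}_{t<1}$ as $t\nearrow1$ is an equilibrium state for $-\log|Df|$; then $\mu_*=\mu_{\rm ac}$ by the uniqueness in Proposition \ref{lackexp} (applicable because the hypotheses on $\ell$ force the acim to be finite, so $t^+=1$ and $\mathscr P(1)=0$), and hence the net $\mu_t$ converges to $\mu_{\rm ac}$. Before the main estimate I would record two soft facts. First, $\mathscr P$ is convex (a supremum of affine functions) and finite on $\mathbb R$ (for $t\ge0$ one has $-t\int\log|Df|d\mu\le0$ since $\chi_{\rm inf}\ge0$), hence continuous, so $\mathscr P(t)\to\mathscr P(1)=0$ as $t\nearrow1$. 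Second, comparing the pressure with $\mu_{\rm ac}$ gives $\mathscr P(t)\ge h(\mu_{\rm ac})-t\int\log|Df|d\mu_{\rm ac}=(1-t)\chi_{\rm ac}$, where $\chi_{\rm ac}:=\int\log|Df|d\mu_{\rm ac}=h(\mu_{\rm ac})>0$ by Pesin's formula; letting $t\nearrow1$ this forces $\mathscr P'(1^-)\le-\chi_{\rm ac}$, and then convexity of $\mathscr P$ together with $\mathscr P'(t)=-\int\log|Df|d\mu_t$ (valid on $(t^-,1)$ by Theorem A) yields
$$\chi_{\rm ac}\ \le\ \int\log|Df|\,d\mu_t\ =\ \frac{h(\mu_t)-\mathscr P(t)}{t}\ \le\ \frac{h_{\rm top}(f)}{t}\qquad(t\in(t^-,1)),$$
so the Lyapunov exponents $\int\log|Df|d\mu_t$ stay bounded, and bounded away from $0$, as $t\nearrow1$.

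The crux is a uniform non-concentration estimate near $c$: there is $\epsilon_0>0$ such that $\log|Df|$ is uniformly integrable with respect to $\{\mu_t\colon t\in[1-\epsilon_0,1)\}$, i.e.\ $\sup_t\int_{\{|\log|Df||>M\}}|\log|Df||\,d\mu_t\to0$ as $M\to\infty$. I would prove this via the Young-tower/Gibbs--Markov inducing scheme underlying Theorems A--C: $\mu_t$ is the projection of the equilibrium state $\bar\mu_t$ of the induced potential $-tS_\tau\log|Df|-\mathscr P(t)\tau$ for a first-return map $F=f^\tau$ to a base $Y$ with $c\notin\overline Y$. The set $\{|\log|Df||>M\}$ lies in a shrinking neighborhood of $c$, entered only during deep excursions, and along such an excursion the orbit passes $c\mapsto1\mapsto0$ and is then pushed out of the neighborhood of the fixed point $0$ in $\asymp\tau$ steps with $\sum_{0\le i<\tau}|\log|Df|\circ f^i|\asymp\tau$; hence the $\mu_t$-mass of $\{|\log|Df||>M\}$, weighted by $|\log|Df||$, is controlled by the tail moment $(\int\tau\,d\bar\mu_t)^{-1}\sum_{n>N(M)}n\,\bar\mu_t(\tau=n)$. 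At $t=1$ this is exactly the return-time tail estimated in the proof of Theorem B (stretched-exponential if $\ell(x)\sim|\log|x-c||^\alpha$, of order $n^{-1/v(c)}$ if $\ell(x)=|x-c|^{-v(x)}$), and the Gibbs property of $\bar\mu_t$ together with the uniform bounds on $S_\tau\log|Df|$ obtained there propagates it uniformly to $t\in[1-\epsilon_0,1)$. The normalizations $f(0)=0=f(1)$, $f(c)=1$ enter here, making the first-return dynamics a full-branched Markov map with explicit excursion geometry near $c$; and the restriction $v(c)<1/2$ (rather than the $v(c)<1$ of Theorems B and C) supplies the quantitative margin — uniform square-integrability of $\tau$, the threshold already appearing in Corollary 2 — needed to keep the tail control uniform in $t$ up to the freezing point.

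Granting this, fix $t_k\nearrow1$ with $\mu_{t_k}\to\mu_*$ weakly. Uniform integrability of $\log|Df|$ together with weak convergence gives $\int\log|Df|d\mu_{t_k}\to\int\log|Df|d\mu_*$, in particular $\int\log|Df|d\mu_*\ge\chi_{\rm ac}>-\infty$. Since $t_k\to1$ and $\mathscr P(t_k)\to0$, it follows that $h(\mu_{t_k})=\mathscr P(t_k)+t_k\int\log|Df|d\mu_{t_k}\to\int\log|Df|d\mu_*$, so upper semicontinuity of the entropy map $\mu\mapsto h(\mu)$ on $\mathcal M(f)$ (valid for unimodal, hence piecewise monotone, maps) gives $h(\mu_*)\ge\limsup_k h(\mu_{t_k})=\int\log|Df|d\mu_*$. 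On the other hand the definition of the pressure gives $h(\mu_*)-\int\log|Df|d\mu_*\le\mathscr P(1)=0$. Hence $h(\mu_*)-\int\log|Df|d\mu_*=0$: $\mu_*$ is an equilibrium state for $-\log|Df|$, so $\mu_*=\mu_{\rm ac}$ by Proposition \ref{lackexp}. As $\mu_*$ was arbitrary, $\mu_t\to\mu_{\rm ac}$ weakly as $t\nearrow1$.

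The main obstacle is the uniform non-concentration estimate. Semicontinuity of $\mu\mapsto\int\log|Df|d\mu$ by itself only yields $\int\log|Df|d\mu_*\ge\limsup_k\int\log|Df|d\mu_{t_k}$, which is the wrong inequality: without a uniform bound, a vanishingly thin sliver of $\mu_t$-mass could slide into a neighborhood of $c$ and, in the weak limit, be pushed onto the orbit of $c$, making $\mu_*$ fail the equilibrium identity. Excluding this with constants that do not degenerate as $t\nearrow t^+=1$ is where the inducing scheme of Theorem A, the tail asymptotics of Theorem B, and the hypothesis $v(c)<1/2$ are all needed; the remainder of the proof is soft.
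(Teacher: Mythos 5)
Your proposal is correct, and its quantitative core is exactly the paper's, but your identification step takes a genuinely different route. The paper first proves Lemma \ref{sentak1}, a general bound on how much Lyapunov exponent can drop in a weak limit, which forces any accumulation point to have the form $u\delta_0+(1-u)\mu_{\rm ac}$; it then kills $u$ by estimating the $\mu_t$-mass near the fixed point $0$, via $\int\varphi_n\,d\mu_{t}\le(\int R\,d\widehat\mu_t)^{-1}\sum_{R(J)>b_n}R(J)\,\widehat\mu_t(J)$, the Gibbs bound \eqref{gibbs} with $\sup_{t\in(t_0,1)}C_2(t)<+\infty$, the length bound $|J|\lesssim n^{-1/\beta}$ from \eqref{in} (or \eqref{in'} in the logarithmic case, using the structure of Remark \ref{chebyshev}), and the choice $t_0/\beta>2$ — which is precisely where $v(c)<1/2$ enters. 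You run the very same tail computation, but on a neighborhood of $c$ instead of $0$: since for Borel $A\subset I$ one has $\mu_t(A)=(\int R\,d\widehat\mu_t)^{-1}\widehat\mu_t(A)$ (primitivity of the return branches) and, by Lemma \ref{P}, $|\log|Df||\asymp R$ near $c$, the quantity $\int_{B_\delta(c)}|\log|Df||\,d\mu_t$ is controlled by $\sum_{n>N(\delta)}n\,\widehat\mu_t(\{R=n\})$, uniformly for $t\in[t_0,1)$ by exactly the same ingredients; this uniform integrability gives $\liminf_k\chi(\mu_{t_k})\ge\chi(\mu_*)$, and then upper semicontinuity of entropy plus $\mathscr P(t_k)\to0$ shows $\mu_*$ is an equilibrium state for $-\log|Df|$, hence $\mu_*=\mu_{\rm ac}$ by Proposition \ref{lackexp}. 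What your route buys is the complete elimination of Lemma \ref{sentak1} and of the intermediate decomposition $u\delta_0+(1-u)\mu_{\rm ac}$, so the soft part becomes shorter; what the paper's route buys is Lemma \ref{sentak1} itself, which is valid for all Misiurewicz flat maps without the $v(c)<1/2$ restriction and has independent interest. Two caveats: the uniformity of the Gibbs constants as $t\nearrow1$, which you dispatch by ``propagation,'' is the one non-soft input and is only justified in the paper by a close inspection of Sarig's construction, so make that step explicit rather than automatic; and your gloss that $v(c)<1/2$ provides ``uniform square-integrability of $\tau$'' is slightly off — what is actually needed is uniform vanishing of the first-moment tail $\sum_{n>N}n\,\widehat\mu_t(\{R=n\})$, i.e.\ $t_0/\beta>2$ with $\beta>v(c)$, which is possible exactly when $v(c)<1/2$ (and automatic in the case $\ell(x)\sim|\log|x-c||^\alpha$). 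The preliminary bounds on $\chi(\mu_t)$ via $\mathscr P'(t)$ are harmless but not needed, since $\log|Df|$ is bounded above and $\chi_{\rm inf}\ge0$.
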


In \cite{Tak17} an example is given in which $\ell(x)\sim|x-c|^{-1}$, $t^+=1$
and $\mu_t$ converges as $t\nearrow t^+$ to the unique measure supported on $\omega(c)$, denoted by $\delta(c)$.
The acim of this map is an infinite measure and thus 
there is no equilibrium state for the potential $-\log|Df|$.
At present we lack the following examples: 
(i) $t^+<1$ and $\mu_t\to\delta(c)$;  (ii) the acim is a finite measure and $\mu_t\to\delta(c)$;
(iii) the acim is a finite measure and
$\mu_t\to u\delta(c)+(1-u)\mu_{\rm ac}$, $0<u<1$.
A natural candidate for (ii) (iii) would be $\ell(x)\sim|x-c|^{-\gamma}$, $\gamma\in[1/2,1).$


In the remainder of the introduction let us briefly sketch proofs of the theorems. 
Since the map $f$ has a critical point, it fails to be uniformly hyperbolic in a severe way.
In order to control the effect of the critical point, a main tool is {\it an inducing scheme}.
It defines an induced map, denoted by $\widehat f$,
 which turns out to be uniformly expanding (albeit with countably infinite number of branches),
 no matter how flat the critical point is. 
  
 By virtue of the Misiurewicz condition, the construction of the inducing scheme
 is straightforwardly done, by considering the first return map to a small interval $I$
 containing the critical point. Combining the expansion of the inducing scheme
 with the flatness of the critical point, we conclude $t^+\leq1$ (See Proposition \ref{lackexp}).

Following the line of Pesin and Senti \cite{PesSen05,PesSen08}, 
 we use the countably infinite Markov partition associated with the inducing scheme in order to code the dynamics into 
 a countable Markov shift.
We apply the results of Mauldin and Urba\'nski
\cite{MauUrb03} and of Sarig \cite{Sar99} on the thermodynamic formalism for countable Markov shifts,
in order to establish the existence and uniqueness of equilibrium state
for the induced map $\widehat f$.
This measure is a Gibbs measure, and turns out to have exponential tail, which
permits us to apply the result of Young \cite{You98} to show that the resultant $f$-invariant measure has exponential decay of correlations.
 The last step of the proof of Theorem A is to show that this measure coincides with the equilibrium state $\mu_t$
constructed by Iommi and Todd \cite{IomTod10}.
We do this by showing that the latter is liftable to the inducing scheme
(See Proposition \ref{uniquel}). The analyticity of the pressure is a by-product. 

A strategy for a proof of Theorem B is to analyze the first return time $R$ to the interval $I$, 
and estimate the Lebesgue measure of the tail $\{R>n\}$, 
and again apply the results of Young \cite{You99}.
The tail estimate is straightforward if the first return time is monotone increasing. 
To treat the general case, we need to carefully analyze the distribution 
of the first return time, see Proposition \ref{inducep}.
We prove Theorem C by putting together our tail estimate with the result of  Gou\"ezel \cite{Gou04}. 

To prove Theorem D,
we first show that any accumulation point of the sequence of equilibrium states as $t\nearrow1$
is a convex combination of the acip and the Dirac measure at the fixed point $0$.
We then show that the accumulation point is indeed the acip.
One key observation is that two constants in the Gibbs condition \eqref{gibbs} stay bounded and 
bounded away from $0$ as $t\nearrow 1$.

The rest of this paper consists of two sections.
In Sect.2 we develop preliminary constructions and estimates needed for the proofs of the main results.
All the theorems are proved in Sect.3.




\section{preliminaries}
This section is organized as follows.
In Sect.\ref{ischeme} we introduce inducing schemes, and state and prove its basic properties.
In Sect.\ref{s0} we develop analytic estimates associated with the inducing schemes,
and use them to show the occurrence of the freezing phase transition.
In Sect.\ref{analysis} we prove main technical estimates.
\subsection{Basic properties of inducing schemes}\label{ischeme}
Let $f$ be a unimodal map.  
Let $U$ be a subset of $X$ and $n\geq1$ an integer. Each connected component of $f^{-n}(U)$ is called \emph{a pull-back of $U$ by $f^n$},
or simply {\it a pull-back of $U$.}
A pull-back~$J$ of~$U$ by~$f^n$ is \emph{diffeomorphic} if $f^n
\colon J \to U$ is a diffeomorphism.
An open subinterval $I$ of $X$ is {\it nice} if $f^n(\partial I)\cap I=\emptyset$ holds for every $n\geq1$.

Let $f$ be a unimodal map satisfying the Misiurewicz condition.
Let $I$ be a nice interval which contains the critical point $c$ of $f$. 
Diffeomorphic pull-backs of $I$ are mutually disjoint.
Assume there exist $\tau>0$
and a closed subinterval $\widehat I$ of $X$ which contains the concentric open interval with $I$
which has length $(1+2\tau)|I|$ 
and satisfies 
$f^n(c)\notin \widehat  I$ for every $n\geq1$.
Put $K_\tau=(1+\tau)^2/\tau$.
Then every pull-back of $I$ is diffeomorphic. 
If $W$ is a pull-back of $I$, 
the integer $r\geq1$
such that $f^r(W)=I$ is unique.
The pull-back $W$ of $I$ is {\it primitive} if 
$f^k(W)\cap I=\emptyset$ holds for every $k\in\{0,\ldots, r-1\}\setminus\{0\}$.

{\it The first return time to $I$} is a function $R\colon I\to \mathbb Z_{>0}\cup\{+\infty\}$ 
defined by
$$R(x)=\inf\left(\{n>0\colon f^n(x)\in I\}\cup\{+\infty\}\right).$$
 If $W$ is a primitive pull-back of $I$, then $R$ is constant on $W$ and this common value is denoted by $R(W)$.
Let $\mathcal W$ denote the collection of all primitive pull-backs of $I$ which are contained in $I$.
The triplet $(I,\mathcal{W},R)$ is called
{\it an inducing scheme}.
  Define an induced map
$\widehat f\colon \bigcup_{J\in\mathcal W} J\to I$ by $\widehat f(x)=f^{R(J_x)}(x)$, where $J_x$ denotes the element of $\mathcal W$ containing $x$.

Below we state and prove basic properties of inducing schemes.
 
 \subsubsection*{\underline{Liftability}}
Consider the dynamical system
 on $\bigcap_{n\geq0}({\widehat f})^{-n}\left(\bigcup_{J\in\mathcal W} J\right)$ generated by $\widehat f$, and
 let $\mathcal M(\widehat f)$ denote the set of $\widehat f$-invariant Borel probability measures.
For a measure $\widehat\mu\in\mathcal M(\widehat f)$ 
for which $\int Rd\widehat\mu$ is finite,
define $$\mathcal L(\widehat\mu)=\frac{1}{\int R d\widehat\mu}\sum_{J\in\mathcal W}\sum_{n=0}^{R(J)-1}(f^n)_*(\widehat \mu|_{J}).$$
It is straightforward to check that 
$\mathcal L(\widehat\mu)\in\mathcal M(f)$. 
A measure $\mu\in\mathcal M(f)$ is {\it liftable to the inducing scheme $(I,\mathcal W,R)$} if
there exists $\widehat\mu\in\mathcal M(\widehat f)$ such that $\int R d\widehat\mu$ is finite and $\mathcal L(\widehat\mu)=\mu$.
Not all measures are liftable. For instance measures whose supports are contained in $\omega(c)$
are not liftable.

\begin{lemma}\label{liftlem}
Let $f$ be a unimodal map satisfying the Misiurewicz condition,
and let  $(I,\mathcal W,R)$ be an inducing scheme.
If $\mu\in\mathcal M(f)$ and $\mu(I)>0$, then $\mu$ is liftable to $(I,\mathcal W,R)$.
\end{lemma}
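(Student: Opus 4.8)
The plan is to produce the lift explicitly, as the normalised restriction of $\mu$ to $I$, and then invoke the classical Kac--Rokhlin tower machinery for the base $I$ with ceiling function $R$. First I record the recurrence input: since $\mu(I)>0$, Poincar\'e recurrence for $(f,\mu)$ and the set $I$ gives that $\mu$-almost every point of $I$ returns to $I$ infinitely often, so $R<\infty$ $\mu$-a.e.\ on $I$. I then check that the induced map $\widehat f$ agrees, on a set of full $\mu|_I$-measure, with the first return map $T\colon x\mapsto f^{R(x)}(x)$. Indeed, if $x\in I$ and $R(x)=r<\infty$, let $W$ be the pull-back of $I$ by $f^r$ containing $x$; by the standing hypotheses on the inducing scheme $W$ is diffeomorphic, and since $I$ is nice while $f^k(x)\notin I$ for $0<k<r$, niceness forces $f^k(W)\cap I=\emptyset$ for all such $k$ and $W\subset I$; thus $W$ is a primitive pull-back contained in $I$, i.e.\ $W\in\mathcal W$ with $R(W)=r$ and $\widehat f(x)=f^r(x)=T(x)$.

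The candidate lift is $\widehat\mu:=\mu(I)^{-1}\mu|_I$. By the classical fact that a first return map preserves the normalised restriction of an invariant measure, $\widehat\mu$ is $T$-invariant, hence $\widehat\mu\in\mathcal M(\widehat f)$. Kac's formula for $(f,\mu)$ over the base $I$ gives $\int_I R\,d\mu=\mu\big(\bigcup_{m\ge0}f^{-m}(I)\big)\le1$, so $\int R\,d\widehat\mu=\mu(I)^{-1}\int_I R\,d\mu\le\mu(I)^{-1}<\infty$; thus $\widehat\mu$ has finite mean return time.

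It remains to show $\mathcal L(\widehat\mu)=\mu$. Substituting $\widehat\mu|_J=\mu(I)^{-1}\mu|_J$ and $\int R\,d\widehat\mu=\mu(I)^{-1}\int_I R\,d\mu$ into the definition of $\mathcal L$ reduces this, after dividing by $\int_I R\,d\mu=\mu\big(\bigcup_{m\ge0}f^{-m}(I)\big)$, to the identity $\sum_{J\in\mathcal W}\sum_{n=0}^{R(J)-1}(f^n)_*(\mu|_J)=\mu\big|_{\bigcup_{m\ge0}f^{-m}(I)}$. To establish it I would verify that the first-return tower over $I$ is a genuine Rokhlin tower: using niceness of $I$ and primitivity of the elements of $\mathcal W$, each $f^n(J)$ with $J\in\mathcal W$ and $0\le n<R(J)$ is a primitive pull-back of $I$ with return time $R(J)-n$, distinct such pull-backs are disjoint, the assignment $(x,n)\mapsto f^n(x)$ is a $\mu$-a.e.\ bijection from the tower onto $\bigcup_{m\ge0}f^{-m}(I)$, and it intertwines the tower map with $f$; the $f$-invariance of $\mu$ then forces the displayed identity. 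Finally $\bigcup_{m\ge0}f^{-m}(I)$ is backward invariant and of positive $\mu$-measure, hence of full $\mu$-measure (this is immediate when $\mu$ is ergodic, the case relevant for the applications), so $\mathcal L(\widehat\mu)=\mu$.

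The step I expect to be the crux is this last identity --- the honest check that the columns $\{f^n(J)\}$ neither overlap nor omit anything and that the tower projection preserves the measures. This is exactly where the geometry of the inducing scheme, namely the niceness of $I$ and the primitivity built into $\mathcal W$, is used; no expansion estimate or control of the critical point enters, and the remaining ingredients are merely Poincar\'e recurrence and Kac's formula.
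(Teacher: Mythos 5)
Your route is genuinely different from the paper's: the paper disposes of the lemma in two lines, quoting Kac's formula for the finiteness of $\int R\,d\mu$ and Zweim\"uller's lifting theorem \cite{Zwe05} for the equality $\mathcal L(\widehat\mu)=\mu$, whereas you reconstruct that machinery by hand, taking $\widehat\mu=\mu(I)^{-1}\mu|_I$ and verifying the spreading identity yourself. Most of your steps are sound: the identification of $\widehat f$ with the first return map a.e.\ on $I$ via niceness is correct, and Kac's formula does give the finite mean return time. But the step you yourself single out as the crux is carried out incorrectly: the first-return tower of a unimodal map is \emph{not} a Rokhlin tower downstairs, and $(x,n)\mapsto f^n(x)$ is not a $\mu$-a.e.\ bijection onto $\bigcup_{m\ge0}f^{-m}(I)$. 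The reason is the $2$-to-$1$ folding at $c$: for $J\in\mathcal W$ the symmetric interval $J^-=\{x^-\colon x\in J\}$ also belongs to $\mathcal W$, has the same return time, and satisfies $f(J)=f(J^-)$; hence the columns over $J$ and $J^-$ coincide from level $1$ upward, and the tower map is $2$-to-$1$ there --- on a set of positive measure whenever $\mu$ charges both sides (as the acip and the equilibrium states do). Your observation that distinct primitive pull-backs are disjoint is true, but distinct floors can produce the \emph{same} pull-back, so it does not yield injectivity. The identity $\sum_{J\in\mathcal W}\sum_{n=0}^{R(J)-1}(f^n)_*(\mu|_J)=\mu|_{\bigcup_{m\ge0}f^{-m}(I)}$ is nevertheless true, but its proof must use the $f$-invariance of $\mu$ to absorb the folding: for instance, pass to the natural extension, where the sets $\tilde f^{\,n}\bigl(\tilde I\cap\{R>n\}\bigr)$, $n\ge0$, genuinely are pairwise disjoint and fill (mod null) the hitting set of $\tilde I$, and then project; or run the measure-level proof of Kac's lemma. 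This is exactly what the cited result of Zweim\"uller encapsulates, so your plan is repairable but, as written, the key verification fails.

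On your final step: deducing full measure of $\bigcup_{m\ge0}f^{-m}(I)$ from backward invariance and positive measure indeed requires ergodicity (or the a priori hypothesis that this hitting set has full $\mu$-measure), and this is not a cosmetic restriction. For a non-ergodic mixture $\mu=\tfrac12\mu_1+\tfrac12\mu_2$ with $\mu_1$ ergodic, $\mu_1(I)>0$, and $\mu_2$ carried by $\omega(c)$ (whose points never enter $I$), one has $\mu(I)>0$ while any measure of the form $\mathcal L(\widehat\nu)$ gives full mass to points that eventually enter $I$, so such a $\mu$ cannot be liftable. Thus your ergodic-case proof captures the actual scope of the statement, which is also how the lemma is used in the paper (it is applied to ergodic measures in Lemma \ref{positive}, and the paper's appeal to Kac with $\int R\,d\mu=1$ implicitly assumes the same).
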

\begin{proof}
Since $R$ is the first return time to $I$, 
if $\mu\in\mathcal M(f)$ and $\mu(I)>0$, then $\int Rd\mu$ is finite (in fact, equal to $1$, see Kac \cite{Kac47}).
 From the result of Zweim\"uller \cite{Zwe05}, $\mu$ is liftable.
\end{proof}

\subsubsection*{\underline{Expansion and bounded distortion}}
For $x,y\in \bigcap_{n\geq0} (\widehat f)^{-n}(\bigcup_{J\in\mathcal W} J)$, $x\neq y$
define $$s(x,y)=\min\{k\geq0\colon \text{$({\widehat f})^k(x)$ and $(\widehat f)^k(y)$ belong to different elements of $\mathcal W$}\}.$$
If $f$ is $S$-unimodal, then
the Koebe Principle \cite[p.277 Theorem 1.2]{dMevSt93} gives
\begin{equation*}\frac{|(D\widehat f)^n(y)|}{|(D\widehat f)^n(x)|}\leq K_\tau\ \ \text{for every }n\in\{0,1,\ldots, s(x,y)\}.\end{equation*}
From this and Ma\~n\'e's hyperbolicity theorem \cite[Theorem A]{Man85}
there exist $\lambda>1$ and an integer $m\geq1$ such that 
\begin{equation*}
|(D\widehat f)^m(x)|\geq\lambda\quad \text{for all $x\in \bigcap_{n=0}^{m-1} (\widehat f)^{-n}\left(\bigcup_{J\in\mathcal W} J\right)$}.\end{equation*}
From these and the last part of
the Koebe Principle \cite[p.277 Theorem 1.2]{dMevSt93} there exists a constant $C>0$ such that
if $s(x,y)\geq1$ then
$$\log\frac{|D\widehat f(x)|}{ |D\widehat f(y)|}\leq C|\widehat f(x)-\widehat f(y)|.$$
Since
$1\geq|I|\geq |(\widehat f)^{s(x,y)}(y)-(\widehat f)^{s(x,y)}(x)|\geq\lambda^{s(x,y)-1}|\widehat f(x)-\widehat f(y)|$
we obtain
\begin{equation}\label{holder}
\log\frac{|D\widehat f(x)|}{ |D\widehat f(y)|}\leq C\lambda^{-s(x,y)+1}.\end{equation}

\subsection{Detecting freezing phase transition}\label{s0}
For $\mu\in\mathcal M(f)$ put $$\chi(\mu)=\int\log|Df|d\mu$$ and call this number {\it a Lyapunov exponent} of $\mu$.

\begin{lemma}\label{positive}
Let $f$ be an $S$-unimodal map satisfying the Misiurewicz condition.
 Then $\chi(\mu)>0$ holds for every $\mu\in\mathcal M(f)$.
\end{lemma}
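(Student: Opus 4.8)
The plan is to separate the measures that charge a fixed small nice neighbourhood $I$ of $c$ from those that do not. Choose $I$ so that it carries an inducing scheme $(I,\mathcal W,R)$ as in Sect.~\ref{ischeme}; such an $I$ exists because (M2) prevents $c$ from being periodic (otherwise $|Df^{n}(c)|=0$) and (M1) gives $c\notin\omega(c)$, so $c$ lies off the closure of $\{f^{n}(c):n\ge 1\}$. Fix $\mu\in\mathcal M(f)$.

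First suppose $\mu(I)>0$. By Lemma~\ref{liftlem}, $\mu=\mathcal L(\widehat\mu)$ for some $\widehat\mu\in\mathcal M(\widehat f)$ with $\int R\,d\widehat\mu<\infty$. Unwinding the definition of $\mathcal L$ and using $\log|D\widehat f|=\sum_{n=0}^{R(J)-1}\log|Df|\circ f^{n}$ on each $J\in\mathcal W$ yields the Abramov--Kac relation $\chi(\mu)\int R\,d\widehat\mu=\int\log|D\widehat f|\,d\widehat\mu$. The right-hand side is strictly positive: bounded distortion on the inducing scheme keeps $\log|D\widehat f|$ bounded below, while the $m$-step expansion $|(D\widehat f)^{m}|\ge\lambda>1$ established in Sect.~\ref{ischeme}, combined with $\widehat f$-invariance of $\widehat\mu$, gives $\int\log|D\widehat f|\,d\widehat\mu=\frac{1}{m}\int\log|(D\widehat f)^{m}|\,d\widehat\mu\ge\frac{1}{m}\log\lambda>0$. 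Since $1\le\int R\,d\widehat\mu<\infty$, this forces $\chi(\mu)>0$.

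Now suppose $\mu(I)=0$. By $f$-invariance $\mu\big(\bigcup_{n\ge0}f^{-n}(I)\big)=0$, so $\mu$ is carried by the compact forward-invariant set $\Lambda=\{x\in X:f^{n}(x)\notin I\ \text{for all}\ n\ge0\}$. As $c\in I$, the set $\Lambda$ contains no critical point, and by (M2) every periodic point of $\Lambda$ is hyperbolic repelling; since $f$ is $C^{3}$ on a neighbourhood of the compact set $\Lambda$, Ma\~n\'e's theorem \cite[Theorem A]{Man85} supplies $C>0$ and $\lambda'>1$ with $|Df^{n}(x)|\ge C(\lambda')^{n}$ for every $x\in\Lambda$ and $n\ge1$. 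Because $\log|Df|$ is bounded on the compact set $\Lambda\subset X\setminus\{c\}$, I may integrate $\log|Df^{n}|\ge\log C+n\log\lambda'$ against $\mu$, use $\int\log|Df^{n}|\,d\mu=n\chi(\mu)$, and let $n\to\infty$ to obtain $\chi(\mu)\ge\log\lambda'>0$.

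The computations in the first case (the Abramov--Kac identity and the minor integrability bookkeeping for $\log|D\widehat f|$) and the passage to the limit in the second case are routine. The one step that genuinely uses the hypotheses is verifying that $\Lambda$ satisfies the assumptions of Ma\~n\'e's theorem — no critical point and no non-repelling periodic orbit — which is exactly where (M1)--(M2) together with the choice $I\ni c$ enter; beyond assembling these ingredients I anticipate no serious obstacle.
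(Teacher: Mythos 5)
Your dichotomy ``$\mu(I)>0$ versus $\mu(I)=0$'' is run on an arbitrary, possibly non-ergodic $\mu$, and that is where the argument as written breaks. A non-ergodic measure can charge $I$ and still fail to be liftable in the sense of the paper: take $\mu=\tfrac12\mu_1+\tfrac12\nu$, where $\mu_1$ is the equidistribution on a periodic orbit passing through $I$ and $\nu$ is any invariant measure supported on $\omega(c)$ (such measures exist, give $I$ zero mass, and are exactly the non-liftable examples mentioned after the definition of $\mathcal L$). Then $\mu(I)>0$, but every measure of the form $\mathcal L(\widehat\nu)$ assigns full mass to $\{x\colon \exists m\geq1,\ f^m(x)\in I\}$, whereas $\mu$ gives this set mass $\tfrac12$; hence no $\widehat\mu$ with $\mathcal L(\widehat\mu)=\mu$ exists, and the Abramov--Kac identity $\chi(\mu)\int R\,d\widehat\mu=\int\log|D\widehat f|\,d\widehat\mu$ you rely on is unavailable. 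Lemma \ref{liftlem} is indeed stated without an ergodicity hypothesis, but its proof (Kac plus Zweim\"uller) and every use of it in the paper concern ergodic measures; for general $\mu$ the correct criterion is $\mu\bigl(\bigcup_{n\geq0}f^{-n}(I)\bigr)=1$, not $\mu(I)>0$. The repair is precisely the step the paper takes and you omitted: first apply the ergodic decomposition, noting that $\chi(\mu)=\int\chi(m)\,d\pi(m)>0$ once $\chi(m)>0$ for $\pi$-a.e.\ ergodic component $m$, and only then run your two cases, which are correct for ergodic $m$ (if $m(I)>0$ liftability is legitimate; if $m(I)=0$ then $m$ sits on the compact invariant set $\Lambda$ avoiding $c$).

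With that insertion the rest is sound and in fact somewhat more self-contained than the paper's proof: where the paper cites \cite[Theorem 3]{BruTod09} for the liftable case, you get positivity directly from the Abramov--Kac relation combined with the uniform expansion $|(D\widehat f)^m|\geq\lambda>1$ of the induced map, and in the hyperbolic case you integrate Ma\~n\'e's estimate directly instead of invoking Birkhoff's theorem. One further small caveat: the existence of a nice interval $I$ carrying the inducing scheme requires more than your one-line appeal to (M1)--(M2) (the paper constructs such an $I$ in Proposition \ref{inducep} using transitivity), though this set-up issue is shared with the paper's own statement of Lemma \ref{positive}.
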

\begin{proof}
From Ma\~n\'e's hyperbolicity theorem
and Birkhoff's ergodic theorem,
$\chi(\mu)>0$ holds for every ergodic $\mu\in\mathcal M(f)$ whose support 
does not contain $c$.
Let $\mu\in\mathcal M(f)$ be ergodic whose support contains $c$.
Poincar\'e's recurrence theorem implies $\mu(I)>0$, and hence $\mu$ is liftable
(See Sect.\ref{ischeme}). From \cite[Theorem 3]{BruTod09}, $\chi(\mu)>0$ holds.
From the ergodic decomposition theorem, the positivity also holds
for non-ergodic measures.
\end{proof}

To detect the freezing phase transition we need the next analytic estimates.

\begin{lemma}\label{P}
Let $f$ be a unimodal map of class $C^2$ satisfying the Misiurewicz condition,
and let  $(I,\mathcal W,R)$ be an inducing scheme.
Then
   the following holds:
$$  R(x)\sim\ell(x)\log|x-c|^{-1};$$
$$ |Df^{R(x)}(x)|\sim\left|D\ell(x)\log|x-c|+\frac{\ell(x)}{x-c}\right|.$$
\end{lemma}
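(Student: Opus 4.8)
The plan is to combine two ingredients: a shadowing estimate along the orbit of $f(c)$, coming from the Misiurewicz condition, and the defining property (F2) of the flat critical point. Both relations concern behaviour near $c$, so it suffices to treat $x$ close to $c$; for such $x$ the return time $R=R(x)$ is large, and if $W\in\mathcal W$ is the element containing $x$ then $R=R(W)$. Write $F(x)=|f(x)-f(c)|$. First I would set up the shadowing. By (M1), (M2) and Ma\~n\'e's theorem, $\Lambda=\overline{\{f^n(c):n\ge1\}}$ is a compact hyperbolic set with $c\notin\Lambda$, so there are $1<\lambda_0\le\lambda_1$ and $C_0\ge1$ with $C_0^{-1}\lambda_0^n\le|Df^n(y)|\le C_0\lambda_1^n$ for $y\in\Lambda$ and all $n\ge1$. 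For $x$ near $c$ the point $f(x)$ is close to $f(c)\in\Lambda$, and the orbit segment $f(x),\dots,f^{R-1}(x)$ shadows that of $f(c)$ inside a fixed neighbourhood of $\Lambda$ disjoint from $I$; since $f^n(c)\notin\widehat I$ for all $n\ge1$, a standard bounded–distortion argument (keeping the interval $[f(c),f(x)]$ and its first $R-1$ iterates small and away from $c$) shows that $f^{R-1}$ is a diffeomorphism with bounded distortion on $[f(c),f(x)]$, so that
$$|Df^{R-1}(f(x))|\sim\frac{|f^{R}(x)-f^{R}(c)|}{F(x)}.$$
As $f^{R}(x)\in I$ while $f^{R}(c)\notin\widehat I$, the numerator lies between $\tau|I|$ and $1$, hence $|Df^{R-1}(f(x))|\sim F(x)^{-1}$.

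For the first formula, since $|Df^{R-1}(f(c))|\sim|Df^{R-1}(f(x))|\sim F(x)^{-1}$ (the first $\sim$ by distortion), the exponential bounds $C_0^{-1}\lambda_0^{R-1}\le|Df^{R-1}(f(c))|\le C_0\lambda_1^{R-1}$ give a constant $C$ with $C^{-1}\lambda_0^{R}\le F(x)^{-1}\le C\lambda_1^{R}$, hence $R\sim\log F(x)^{-1}$, the additive constants being absorbed because $\log F(x)^{-1}\to\infty$. Now (F2) gives $F(x)\sim|\eta\circ f(x)|=|\xi(x)|^{\ell(x)}$, since $\eta$ is a diffeomorphism with $\eta(f(c))=0$, so $\log F(x)^{-1}=\ell(x)\log|\xi(x)|^{-1}+O(1)$; because $\xi$ is a diffeomorphism with $\xi(c)=0$ one has $\log|\xi(x)|^{-1}/\log|x-c|^{-1}\to1$, and $\ell(x)\log|x-c|^{-1}\to\infty$ absorbs the $O(1)$. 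Therefore $R(x)\sim\ell(x)\log|x-c|^{-1}$.

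For the second formula, I would factor $|Df^{R}(x)|=|Df(x)|\,|Df^{R-1}(f(x))|\sim|Df(x)|/F(x)$ using the first paragraph, and then differentiate $|\eta\circ f(x)|=|\xi(x)|^{\ell(x)}$ on each side of $c$, where both members are $C^1$ and strictly positive; taking logarithms first gives
$$\frac{|\eta'(f(x))|}{|\eta(f(x))|}\,|Df(x)|=\left|D\ell(x)\log|\xi(x)|+\ell(x)\frac{D\xi(x)}{\xi(x)}\right|.$$
Since $\eta$ is a $C^1$ diffeomorphism with $\eta(f(c))=0$ one has $|\eta'(f(x))|\sim1$ and $|\eta(f(x))|\sim F(x)$, so the left–hand side is $\sim|Df(x)|/F(x)$, and therefore $|Df^{R}(x)|\sim\left|D\ell(x)\log|\xi(x)|+\ell(x)D\xi(x)/\xi(x)\right|$.

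The last step, which I expect to be the main obstacle, is to replace $\xi$ by $x-c$, i.e. to show the previous quantity is $\sim\left|D\ell(x)\log|x-c|+\ell(x)/(x-c)\right|$. A crude term–by–term estimate would lose a possible cancellation between the two summands, and the content is that no such cancellation occurs. Indeed, since $\ell(x)\to+\infty$ and $|D\ell(x)|\to+\infty$ at $c$, $D\ell$ cannot vanish near $c$, hence has constant sign on each side, and since $\ell$ blows up at $c$ this sign equals $-\,\mathrm{sgn}(x-c)$; as $\log|x-c|<0$ and $\mathrm{sgn}(1/(x-c))=\mathrm{sgn}(x-c)$, both summands $D\ell(x)\log|x-c|$ and $\ell(x)/(x-c)$ then have sign $\mathrm{sgn}(x-c)$, and likewise both summands of the $\xi$–version, using that $\log|\xi(x)|<0$ and $D\xi(x)/\xi(x)\sim1/(x-c)$ near $c$. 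Hence each absolute value equals the sum of the absolute values of its two summands, and a term–by–term comparison using $\log|\xi(x)|\sim\log|x-c|$ and $D\xi(x)/\xi(x)\sim1/(x-c)$ (both ratios tending to $1$) finishes the proof. The shadowing argument of the first paragraph is routine, but requires the usual care in controlling the shadow interval all the way up to time $R-1$.
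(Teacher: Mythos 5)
Your argument is essentially the paper's: bounded distortion of $f^{R-1}$ on the interval between $f(x)$ and $f(c)$ (via Ma\~n\'e/Misiurewicz), exponential growth of $|Df^{R-1}(f(c))|$, the pinching $\tau|I|\le|f^{R}(x)-f^{R}(c)|\le 1$, and (F2) to convert everything into $\ell$, both for $R(x)$ and, after logarithmic differentiation, for $|Df^{R(x)}(x)|$. The only difference is that you make explicit the replacement of the coordinates $\xi,\eta$ by $x-c$ and the sign/no-cancellation argument between $D\ell(x)\log|x-c|$ and $\ell(x)/(x-c)$, a step the paper absorbs into its ``up to $C^1$ changes of coordinates'' shortcut; this is a correct and slightly more careful rendering of the same proof.
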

\begin{proof}
Let $x\in I$.
Since $f$ is $C^2$ on $X\setminus\{c\}$ and Ma\~n\'e's hyperbolicity theorem, there exists a constant $C=C(I)\geq1$
 such that for every $z\in X$ in between $f(x)$ and $f(c)$,
 \begin{equation}\label{subeq-1}C^{-1}\leq \frac{|Df^{R(x)-1}(z)|}{|Df^{R(x)-1}(f(c))|}\leq C.\end{equation}
By (F2), up to $C^1$ changes of coordinates around $c$ and $f(c)$ we have
$f(x)=f(c)-|x-c|^{\ell(x)}$ for all $X\setminus\{c\}$ near $c$.
From this and \eqref{subeq-1},
\begin{equation}\label{criexp}
|f^{R(x)}(x)-f^{R(x)}(c)|\sim|x-c|^{\ell(x)}|Df^{R(x)-1}(f(c))|.\end{equation}
There exist constants $C\geq1$ and $0<\lambda_0<\lambda_1$ such that
\begin{equation}\label{critexp}
C^{-1}e^{\lambda_0(R(x)-1)}\leq |Df^{R(x)-1}(f(c))|\leq Ce^{\lambda_1(R(x)-1)},\end{equation}
and therefore there exists a constant $\tilde C=\tilde C(I)>1$ such that
\begin{equation}\label{subeq0}
\tilde C^{-1}|x-c|^{\ell(x)}e^{\lambda_0(R(x)-1)}\leq |f^{R(x)}(x)-f^{R(x)}(c)|\leq \tilde C|x-c|^{\ell(x)}e^{\lambda_1(R(x)-1)}.\end{equation}
Since $f^{R(x)}(x)\in I$ and $f^{R(x)}(c)\notin\widehat I$,
$\tau|I|\leq |f^{R(x)}(x)-f^{R(x)}(c)|$ holds. Plugging this into the second inequality in 
\eqref{subeq0} gives a lower estimate of $R(x)$.
Plugging $|f^{R(x)}(x)-f^{R(x)}(c)|\leq1$ into the first inequality in 
\eqref{subeq0} gives an upper estimate of $R(x)$.
These two estimates together imply the desired one.

For the derivative estimate, note that \begin{align*}
|Df(x)|&\sim |x-c|^{\ell(x)}\left|D\ell(x)\log|x-c|+\frac{\ell(x)}{x-c}\right|\\
&\sim |f(x)-f(c)|\left|D\ell(x)\log|x-c|+\frac{\ell(x)}{x-c}\right|,
\end{align*}
and thus
\begin{align*}
|Df^{R(x)}(x)|\sim  |Df^{R(x)-1}(f(x))|     |f(x)-f(c)|\left|D\ell(x)\log|x-c|+\frac{\ell(x)}{x-c}\right|.
\end{align*}
For some $z$ in between $f(x)$ and $f(c)$, $|f^{R(x)}(x)-f^{R(x)}(c)|=|Df^{R(x)-1}(z)||f(x)-f(c)|$ holds.
From this and \eqref{subeq-1} we obtain
$|Df^{R(x)-1}(f(x))||f(x)-f(c)|\sim |(f^{R(x)})(x)-f^{R(x)}(c)|.$
\end{proof}

\begin{lemma}\label{lack}
Let $f$ be an $S$-unimodal map with flat critical point
   satisfying the Misiurewicz condition. Then  $\chi_{\rm inf}=0$.
\end{lemma}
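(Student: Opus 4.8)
The plan is to combine Lemma~\ref{positive} with a construction of periodic orbits that pass arbitrarily close to the critical point, reading off their Lyapunov exponents from Lemma~\ref{P}.

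By Lemma~\ref{positive} one has $\chi(\mu)>0$ for every $\mu\in\mathcal M(f)$, so $\chi_{\rm inf}\geq0$; the content of the lemma is the reverse inequality. Fix an inducing scheme $(I,\mathcal W,R)$ with $c\in I$. Since $f^n(c)\notin\widehat I\supset I$ for every $n\geq1$, the critical point never returns to $I$, while Lebesgue-almost every point of $I$ does; hence there is a sequence $(J_n)$ in $\mathcal W$ with $\sup_{x\in J_n}|x-c|\to0$, and then $R(J_n)\to\infty$ by the first estimate of Lemma~\ref{P}. For each $J\in\mathcal W$ the map $\widehat f|_J$ is a diffeomorphism onto $I$ with $J\subset I$, so it has a fixed point $p_J\in J$, unique because $\widehat f$ is uniformly expanding; then $p_J$ is a periodic point of $f$ of period $R(J)$, the normalized measure $\mu_J$ on its orbit belongs to $\mathcal M(f)$, and by the chain rule (using that $\widehat f$ coincides with $f^{R(J)}$ on $J$)
$$\chi(\mu_J)=\frac{1}{R(J)}\log|Df^{R(J)}(p_J)|=\frac{\log|D\widehat f(p_J)|}{R(J)}.$$
It therefore suffices to prove $\log|D\widehat f(p_{J_n})|=o(R(J_n))$ as $n\to\infty$.

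Write $x=p_{J_n}$. By the second estimate of Lemma~\ref{P}, up to an additive $O(1)$,
$$\log|D\widehat f(x)|\leq\max\Bigl\{\log|D\ell(x)|+\log\bigl|\log|x-c|\bigr|,\ \log\ell(x)+\log|x-c|^{-1}\Bigr\}+O(1),$$
while the first estimate gives $R(J_n)\sim\ell(x)\log|x-c|^{-1}$, which diverges with $\ell(x)$. Since $\log\ell(x)=o(\ell(x))$, $\log|x-c|^{-1}=o\bigl(\ell(x)\log|x-c|^{-1}\bigr)$ and $\log\bigl|\log|x-c|\bigr|=o(\log|x-c|^{-1})$, all the terms above except $\log|D\ell(x)|$ are $o(R(J_n))$. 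The crux is
$$\log|D\ell(x)|=o\bigl(\ell(x)\log|x-c|^{-1}\bigr)\qquad(x\to c),$$
equivalently, that the induced branches over a neighbourhood of $c$ expand only subexponentially in their (enormous) return times. This is precisely where flatness is used: by (F2), after $C^1$ changes of coordinates near $c$ and $f(c)$, $|f(x)-f(c)|=|x-c|^{\ell(x)}$ with $\ell(x)\to+\infty$, so (using the minimum principle for negative-Schwarzian maps, $|Df|$ is monotone near $c$) $|Df(x)|$ vanishes at $c$ faster than any power of $|x-c|$; feeding this into $|Df(x)|\sim|x-c|^{\ell(x)}\bigl|D\ell(x)\log|x-c|+\ell(x)/(x-c)\bigr|$ from Lemma~\ref{P} forces $\bigl|D\ell(x)\log|x-c|+\ell(x)/(x-c)\bigr|$, hence $|D\ell(x)|$, to be of order $|x-c|^{-o(\ell(x))}$ as $x\to c$. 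Granting this, $\chi(\mu_{J_n})\to0$ and therefore $\chi_{\rm inf}=0$.

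I expect this last estimate to be the main obstacle. What must be excluded is that $D\ell$ grows so fast near $c$ that, despite the divergence of the return times $R(J_n)$, the quantity $\log|D\widehat f(p_{J_n})|$ is of the same order as $R(J_n)$; ruling this out is exactly a quantitative form of the flatness of $c$, and it is here that the full strength of the hypotheses on $f$ (the flatness condition together with $f$ being a genuine $C^1$ map with negative Schwarzian derivative) has to be invoked to bound $\log|D\ell|$ — equivalently $\log|D\widehat f|$ — against the return time. Everything else in the argument — the existence and uniqueness of the periodic points $p_J$, the formula for $\chi(\mu_J)$, and the accumulation of branches at $c$ with diverging return times — is routine given Sections~\ref{ischeme} and~\ref{s0}.
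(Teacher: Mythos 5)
Your reduction---periodic points $p_{J_n}$ in branches accumulating at $c$, the identity $\chi(\mu_{J_n})=\frac{1}{R(J_n)}\log|D\widehat f(p_{J_n})|$, and Lemma \ref{P} turning the problem into a comparison of $\log|D\ell(x)|$ with $\ell(x)\log|x-c|^{-1}$---is exactly the paper's strategy. But the step you yourself flag as the crux, namely $\log|D\ell(x)|=o\bigl(\ell(x)\log|x-c|^{-1}\bigr)$ as $x\to c$, is not proved, and the sketch you offer for it does not work: from $|Df(x)|\sim|x-c|^{\ell(x)}\bigl|D\ell(x)\log|x-c|+\ell(x)/(x-c)\bigr|$ together with the fact that $|Df(x)|$ vanishes faster than any power of $|x-c|$ you cannot extract any bound on $|D\ell|$ of the required form. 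For instance, $|D\ell(x)|$ of size $|x-c|^{-\ell(x)/2}$ would still give $|Df(x)|\lesssim|x-c|^{\ell(x)/2}\log|x-c|^{-1}$, which is super-polynomially small, so the flatness of $Df$ at $c$ imposes no constraint of the kind you need; and the quantity you would get from such a $D\ell$ is $\log|D\ell(x)|\approx\frac12\ell(x)\log|x-c|^{-1}$, i.e.\ of the same order as $R$, which is precisely the scenario you must exclude. Moreover the full limit statement you assert is stronger than what (F1)--(F2) yield: nothing prevents $\ell$ from having steep spikes on a sparse set approaching $c$, so as an ``as $x\to c$'' statement it is likely false in general.

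What is actually true, and what the paper proves, is only the $\liminf$ version: $\liminf_{x\to c-0}\frac{\log|D\ell(x)|}{\ell(x)\log|x-c|^{-1}}=0$. This suffices, because one only needs a single sequence of branches along which the exponent tends to $0$ (bounded distortion, or the uniformity of the Lemma \ref{P} estimates over each branch, lets you pass from the good points to the periodic points in the same branches). The missing idea is a short differential-inequality argument: if the liminf were positive, then, since $D\ell>0$ on $[0,c)$ near $c$, there would be $\varepsilon>0$ with $D\ell(x)e^{-\varepsilon\ell(x)}\geq\frac{1}{c-x}$ for all $x\in[0,c)$ near $c$; integrating from a fixed $x_0$ gives $e^{-\varepsilon\ell(x)}\leq e^{-\varepsilon\ell(x_0)}+\varepsilon\log\frac{|x-c|}{|x_0-c|}$, and the right-hand side reaches $0$ at the point $x=c-|x_0-c|\exp\bigl(-\varepsilon^{-1}e^{-\varepsilon\ell(x_0)}\bigr)$, which is strictly before $c$; hence $\ell(x)\to+\infty$ there, contradicting that $\ell$ is finite ($C^3$) on $X\setminus\{c\}$. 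Without this argument (or a substitute for it) your proof does not close; with it, and with the $o(\cdot)$ claim weakened to the liminf statement along the chosen sequence, the rest of your outline coincides with the paper's proof.
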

\begin{proof}
Taking renormalizations if necessary, we may assume $f$ is topologically mixing.
We construct a sequence of periodic points of $f$
 whose Lyapunov exponents converge to $0$.

We claim
\begin{equation*}\label{asym}
\displaystyle{\liminf_{x\to c-0}}\frac{\log|D\ell(x)|}{\ell(x)\log|x-c|^{-1}}=0.
\end{equation*}
If this is not the case, then from $D\ell>0$ on $[0,c)$ there exists $\varepsilon>0$ such that for all $x\in[0,c)$ near $c$,
$$D\ell(x)e^{-\varepsilon\ell(x)}\geq-\frac{1}{x-c}.$$
Let $x_0\in[0,c)$ be sufficiently near $c$.
Integrating the above inequality from $x_0$ to $x\in(x_0,c)$,
$$e^{-\varepsilon\ell(x)}\leq e^{-\varepsilon\ell(x_0)}+ \varepsilon\log\frac{|x-c|}{|x_0-c|}.$$
The right-hand-side goes to $0$ as 
 $x\to c-|x_0-c|\exp\left(-\varepsilon^{-1}e^{-\varepsilon\ell(x_0)}\right),$
and hence $\ell(x)\to+\infty$ in the same limit.
This yields a contradiction.

Let $(I,\mathcal W,R)$ be an inducing scheme. Since $f$ is topologically mixing,
 $\overline{I}\setminus\{c\}=\bigcup_{J\in\mathcal W} \overline{J}$ holds.
Since each $J\in\mathcal W$ contains a periodic point of least period $R(J)$, it is possible to take
a monotone decreasing sequence
 $\{x_n\}_{n\geq0}$ of periodic points converging to $c$
 for which 
$$ \displaystyle{\lim_{n\to\infty}}\frac{\log|D\ell(x_n)|}{\ell(x_n)\log|x_n-c|^{-1}}=0.$$
Let $J_{x_n}$ denote the element of $\mathcal W$ which contains $x_n$.
The estimates in Lemma \ref{P} gives
\begin{equation*}\lim_{n\to\infty}\frac{1}{R(J_{x_n})}\log|Df^{R(J_{x_n})}(x_n)|=0.\qedhere\end{equation*}
\end{proof}

We are in position to show the occurrence of the freezing phase transition.
\begin{prop}\label{lackexp}
   Let $f$ be an $S$-unimodal map with flat critical point
   satisfying the Misiurewicz condition. Then $t^+\leq1$ holds.
 If the absolutely continuous invariant measure is a finite measure, then $t^+=1$, and $\mu_{\rm ac}$ is the unique equilibrium state
for the potential $-\log|Df|$. 
  \end{prop}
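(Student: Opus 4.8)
\emph{Plan.} The strategy is to prove the two assertions separately, in both cases reducing to the uniformly expanding induced map $\widehat f$ of an inducing scheme $(I,\mathcal W,R)$ and appealing to Lemma \ref{lack}. For the inequality $t^+\le1$: since $\chi_{\rm inf}=0$ by Lemma \ref{lack}, we have $t^+=\sup\{t\in\mathbb R\colon\mathscr P(t)>0\}$, and as each affine function $t\mapsto h(\mu)-t\chi(\mu)$ is non-increasing (because $\chi(\mu)>0$ by Lemma \ref{positive}), so is $\mathscr P$; it therefore suffices to show $\mathscr P(1)\le0$, i.e. $h(\mu)\le\chi(\mu)$ for every $\mu\in\mathcal M(f)$. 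By the ergodic decomposition we may take $\mu$ ergodic, and since this is clear when $h(\mu)=0$ we may assume $h(\mu)>0$. If $\mu(I)=0$ then the support of $\mu$ is a compact invariant set disjoint from a neighborhood of $c$, hence uniformly expanding by Ma\~n\'e's theorem \cite{Man85}, and $h(\mu)\le\chi(\mu)$ is the Ruelle inequality for expanding maps. If $\mu(I)>0$ then by Lemma \ref{liftlem} there is $\widehat\mu\in\mathcal M(\widehat f)$ with $\int R\,d\widehat\mu<\infty$ and $\mathcal L(\widehat\mu)=\mu$; the Ruelle inequality for the countably branched uniformly expanding map $\widehat f$ gives $h(\widehat\mu)\le\int\log|D\widehat f|\,d\widehat\mu$, and dividing by $\int R\,d\widehat\mu$ and using Abramov's formula together with $\log|D\widehat f|=\sum_{k=0}^{R-1}\log|Df|\circ f^k$ yields $h(\mu)\le\chi(\mu)$. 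Hence $\mathscr P(t)\le\mathscr P(1)\le0$ for $t\ge1$, so $t^+\le1$.

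\emph{The case of a finite acim.} Suppose the acim is finite, with normalization $\mu_{\rm ac}$. Passing to a renormalization we may assume $f$ is topologically mixing, so $\widehat f$ is a full-branch uniformly expanding Markov map with bounded distortion; it then admits a unique invariant probability $\widehat\mu_{\rm ac}$ equivalent to Lebesgue on $I$, with density bounded above and below. Finiteness of the acim is equivalent to $\int R\,d\widehat\mu_{\rm ac}<\infty$, in which case $\mathcal L(\widehat\mu_{\rm ac})$ is a finite absolutely continuous invariant measure for $f$, hence equals $\mu_{\rm ac}$ after normalization by the uniqueness of the acim. Rokhlin's formula for $\widehat f$ gives $h(\widehat\mu_{\rm ac})=\int\log|D\widehat f|\,d\widehat\mu_{\rm ac}$; dividing by $\int R\,d\widehat\mu_{\rm ac}$ and using Abramov's formula and the chain rule as above gives $h(\mu_{\rm ac})=\chi(\mu_{\rm ac})$, which is finite because $\mu_{\rm ac}$ has bounded density near $c$ (as $c\notin\omega(c)$) while $\log|Df|$ is Lebesgue-integrable near $c$ by the finiteness hypothesis. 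Therefore $\mathscr P(1)\ge h(\mu_{\rm ac})-\chi(\mu_{\rm ac})=0$, so with the first paragraph $\mathscr P(1)=0$ and $\mu_{\rm ac}$ is an equilibrium state for $-\log|Df|$. Moreover $\mathscr P(t)\ge h(\mu_{\rm ac})-t\chi(\mu_{\rm ac})=(1-t)\chi(\mu_{\rm ac})>0$ for $t<1$, so $t^+\ge1$ and hence $t^+=1$. Finally, if $\nu$ is any equilibrium state for $-\log|Df|$ then $h(\nu)-\chi(\nu)=\mathscr P(1)=0$ with $\chi(\nu)\in(0,\infty)$ (Lemma \ref{positive} and finiteness of the entropy), so $\nu$ is absolutely continuous with respect to Lebesgue by Ledrappier's theorem, and the uniqueness of the acim forces $\nu=\mu_{\rm ac}$.

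\emph{Main difficulty.} The delicate part is the bookkeeping between $f$ and $\widehat f$: that $\mu_{\rm ac}$ lifts to $\widehat\mu_{\rm ac}$, that ``the acim is finite'' is equivalent to $\int R\,d\widehat\mu_{\rm ac}<\infty$, and that Rokhlin's and Abramov's formulas apply with $\log|Df|$ genuinely $\mu_{\rm ac}$-integrable in spite of the flat critical point; once these are secured, together with a version of Ledrappier's theorem valid for $S$-unimodal maps, the remainder is routine.
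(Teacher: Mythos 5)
Your proposal is correct in substance but reaches the conclusion by a more hands-on route than the paper. The paper disposes of $t^+\le1$ in one line by quoting Ruelle's inequality for the $C^1$ map $f$ (giving $\mathscr P(1)\le0$) and combining it with Lemma \ref{lack}; you instead re-derive $h(\mu)\le\chi(\mu)$ through the inducing scheme (Ma\~n\'e on the part of the measure missing $I$, liftability plus Ruelle/Abramov for the induced uniformly expanding map when $\mu(I)>0$). That costs you some bookkeeping (Abramov, $\log|D\widehat f|=\sum_{k<R}\log|Df|\circ f^k$, integrability) but buys a self-contained argument not relying on the smooth-ergodic-theory citation. For the finite-acim case the paper simply invokes Dobbs' characterization (\cite[Theorem 1.5]{Dob14}): $\mu$ is an acip if and only if $h(\mu)=\chi(\mu)>0$, which gives both $\mathscr P(1)\ge0$ (hence $t^+=1$) and uniqueness at $t=1$ via uniqueness of the acim; you split this into two halves, proving the ``acip $\Rightarrow h=\chi$'' direction yourself via the folklore invariant density for the full-branch induced map, Rokhlin's formula and Abramov, and quoting a ``Ledrappier-type'' theorem for the converse. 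The one point you should sharpen is exactly that converse: the classical Ledrappier theorem does not cover a flat critical point, and the statement you need (an equilibrium state with $h(\nu)=\chi(\nu)>0$ is absolutely continuous) is precisely Dobbs' flat-top/cusp version, which is what the paper cites; as written you flag this ingredient but neither prove it nor name the correct reference. Two further minor remarks: your reduction ``pass to a renormalization to assume topological mixing'' matches what the paper itself does in Lemma \ref{lack}, so it is harmless but should be said to preserve the quantities $t^+$, $\mathscr P$ and $\mu_{\rm ac}$; and your equivalence ``acim finite $\Leftrightarrow\int R\,d\widehat\mu_{\rm ac}<\infty$'' tacitly uses the Benedicks--Misiurewicz uniqueness of the $\sigma$-finite acim, which is available and worth citing.
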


\begin{proof}
Ruelle's inequality \cite{Rue78} implies $\mathscr{P}(1)\leq0$. 
From Lemma \ref{lack}, $\mathscr{P}(t)>0$ holds for $t<t^+$, and thus
 $t^+\leq1$. 
From Lemma \ref{positive}, a measure $\mu\in\mathcal M(f)$ is absolutely continuous with respect to the Lebesgue measure
if and only if $h(\mu)-\chi(\mu)=0$ 
 \cite[Theorem 1.5]{Dob14}.
 If $t^+<1$, then for every $t\in(t^+,1)$ we have $\mathscr{P}(t)\geq h(\mu)-t\chi(\mu)>0$, a contradiction.
 The last statement of Proposition \ref{lackexp} is a consequence of  \cite[Theorem 1.5]{Dob14}.
\end{proof}

\subsection{Technical estimates}\label{analysis}
Given an inducing scheme $(I,\mathcal W, R)$, 
for each $n>1$ put $$S(n)=\{J\in\mathcal W\colon R(J)=n\}$$ and
 $$\{R>n\}=\{x\in I\colon R(x)>n\}.$$
 The Lebesgue measure of a subset $A$ of $X$ is denoted by $|A|$.

\begin{prop}\label{inducep}
Let $f$ be a topologically transitive $S$-unimodal map 
satisfying the Misiurewicz condition.
There exists an inducing scheme $(I,\mathcal W, R)$ with the following properties:

\begin{itemize}


\item[1.] 
for any $\gamma>1$ there exists a constant $c_\gamma>0$ 
such that $$\#S(n)<c_\gamma\gamma^n\quad\text{for every }n\geq1;$$

\item[2.] if the critical point $c$ is flat and $\ell(x)\sim|\log|x-c||^\alpha$ for some $\alpha>0$,
then there exist constants $C_0>0$ and $C_1>0$ such that 
$$|\{R>n\}|\leq C_0e^{-C_1n^{\frac{1}{\alpha+1}}}\quad\text{for every }n\geq1;$$

\item[3.] if there exists a positive $C^3$ function on $X$ such that $v(c)\in(0,1)$
and $\ell(x)=|x-c|^{-v(x)}$ for all $x$ near $c$,
then  for every $\beta>v(c)$ there exist constants $0<C_2<C_3$
such that
 \begin{equation*}
C_2n^{-\frac{1}{v(c)}}\leq|\{R>n\}|\leq C_3n^{-\frac{1}{\beta}}\quad\text{for every }n\geq1.\end{equation*}
\end{itemize}

\end{prop}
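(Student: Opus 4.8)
The plan is to take $(I,\mathcal W,R)$ to be the first return scheme to a carefully chosen small nice interval $I\ni c$ and to base all three estimates on the elementary identity $\{R>n\}=f^{-1}(\Lambda_n)\cap I$, where $\Lambda_n:=\bigcap_{j=0}^{n-1}f^{-j}(X\setminus I)$: a point $x\in I$ has first return time $>n$ exactly when $f(x),\dots,f^n(x)\notin I$, i.e.\ $f(x)\in\Lambda_n$. The sets $\Lambda_n$ decrease to the forward invariant compact set $\Lambda:=\bigcap_n\Lambda_n\subset X\setminus I$, which contains no critical point and, by the Misiurewicz condition, no attracting or indifferent periodic orbit; hence Ma\~n\'e's theorem supplies $C>0$ and $\lambda>1$ with $|Df^n(x)|\ge C\lambda^n$ whenever $x,f(x),\dots,f^{n-1}(x)\notin I$, i.e.\ on $\Lambda_n$. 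For a connected component $J$ of $\Lambda_n$ none of $J,f(J),\dots,f^{n-1}(J)$ meets $I\ni c$, so $f^n$ is a diffeomorphism on $J$ and $|J|\le C^{-1}\lambda^{-n}|f^n(J)|\le C^{-1}\lambda^{-n}$.

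The interval $I$ must be small enough to admit the enlargement $\widehat I$ demanded of an inducing scheme (possible since $\{f^n(c)\}_{n\ge1}$ stays a fixed distance from $c$) and so that Lemma \ref{P} is available on $I$; in addition I want to choose $I$ so that the number of connected components of $\Lambda_n$ grows slower than $\lambda^n$, that is, $h_{\mathrm{top}}(f|_\Lambda)<\log\lambda$. Granting such a choice, the number of components of $\Lambda_n$ is $e^{o(n)}$, and combined with the length bound above $|\Lambda_n|\le C_0'\theta^n$ for some $\theta\in(0,1)$. Item~1 is then immediate: a component of $\{R=n\}$ is a component of $f^{-1}(\Lambda_{n-1}\setminus\Lambda_n)\cap I$ and, $f|_I$ being two-to-one, $\#S(n)$ is at most twice the number of components of $\Lambda_{n-1}$, hence $e^{o(n)}\le c_\gamma\gamma^n$ for every $\gamma>1$.

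For items~2 and~3 I split $\{R>n\}=f^{-1}(\Lambda_n)\cap I$ according to a fixed small $\rho>0$ into $\{x\in V_\rho:R(x)>n\}$ and its complement, where $V_\rho:=\{x\in I:|f(x)-f(c)|<\rho\}$ is a fixed neighbourhood of $c$. On the complement the first iterate lies in $\Lambda_n\setminus B(f(c),\rho)$, whose $f$-preimages in $I$ avoid $c$, where $|Df|$ is bounded away from $0$ and $f$ is at most two-to-one; so this piece has measure $\le C''|\Lambda_n|\le C''C_0'\theta^n$, which is negligible. On $V_\rho$ we are near $c$, so Lemma \ref{P} gives $R(x)\sim\ell(x)\log|x-c|^{-1}$, and by (F1), near $c$ the function $\ell$ is monotone on each side of $c$, so $\ell(x)\log|x-c|^{-1}$ is decreasing in $|x-c|$; consequently $\{x\in V_\rho:R(x)>n\}$ is trapped, up to bounded factors in the radii, between two intervals about $c$ of radius $\delta_n$, with $\ell(x)\log|x-c|^{-1}\asymp n$ at $|x-c|=\delta_n$. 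It remains to solve this. If $\ell(x)\sim|\log|x-c||^{\alpha}$, then the left side is $\sim(\log\delta_n^{-1})^{\alpha+1}$, so $\log\delta_n^{-1}\asymp n^{1/(\alpha+1)}$ and $\delta_n\asymp e^{-Cn^{1/(\alpha+1)}}$, which together with the exponentially small error gives item~2. If $\ell(x)=|x-c|^{-v(x)}$ with $v(c)\in(0,1)$, then, $v$ being Lipschitz, $|x-c|^{v(c)-v(x)}\to1$, so $\ell(x)\sim|x-c|^{-v(c)}$ and the equation becomes $\delta_n^{-v(c)}\log\delta_n^{-1}\asymp n$, whence $\log\delta_n^{-1}=\tfrac1{v(c)}\log n+O(\log\log n)$ and $\delta_n\asymp n^{-1/v(c)}(\log n)^{1/v(c)}$. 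In particular $|\{R>n\}|\ge C_2 n^{-1/v(c)}$; and since $(\log n)^{1/v(c)}=o(n^{1/v(c)-1/\beta})$ for every $\beta>v(c)$, the main term — hence, after absorbing the exponential error, $|\{R>n\}|$ — is $\le C_3 n^{-1/\beta}$, which is item~3.

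The crux is the choice of $I$ underlying item~1: one must produce a nice interval satisfying all the inducing-scheme requirements whose survivor dynamics $f|_\Lambda$ has entropy below the Ma\~n\'e rate $\log\lambda$. This is exactly what keeps ``accidental'' long returns — returns made long by shadowing the orbit of $\partial I$, or a periodic orbit inside $X\setminus I$, rather than by proximity to $c$ — only exponentially heavy, so that the genuine contribution from a neighbourhood of $c$ (merely sub-exponential in case~2, merely polynomial in case~3) dominates. One must also track that the two-sided constant in Lemma \ref{P} (and the two exponential rates bounding $|Df^{k}(f(c))|$ used in its proof) only shift $n$ by a bounded factor when one solves $\ell(x)\log|x-c|^{-1}\asymp n$; this is harmless for the stated estimates, and is the reason the lower bound in item~3 is recorded only as $n^{-1/v(c)}$ rather than the sharper $n^{-1/v(c)}(\log n)^{1/v(c)}$.
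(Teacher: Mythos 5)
Your treatment of the contribution near $c$ is fine: solving $\ell(x)\log|x-c|^{-1}\asymp n$ via Lemma \ref{P} is essentially what the paper does, and in particular the lower bound in item 3 does follow from it. The genuine gap is precisely what you ``grant'' about the choice of $I$. Both item 1 and the far-from-$c$ part of items 2--3 rest in your scheme on controlling the number of components of $\Lambda_n$, and the mechanism you propose (choose $I$ with $h_{\rm top}(f|_\Lambda)<\log\lambda$) is neither established nor sufficient. First, $h_{\rm top}(f|_\Lambda)<\log\lambda$ would only give a component count of order $e^{(h+\varepsilon)n}$, not $e^{o(n)}$; to get $\#S(n)\le c_\gamma\gamma^n$ for \emph{every} $\gamma>1$ you need genuinely subexponential branch counting, i.e.\ in effect zero survivor entropy, and shrinking $I$ only enlarges the survivor set, so there is no obvious way to ``choose $I$'' to force this -- this is the heart of the proposition, not a detail to be granted. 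Second, the Ma\~n\'e rate $\lambda$ is merely a uniform lower bound on expansion outside $I$ and may be very close to $1$, while the survivor set can carry entropy bounded away from $0$; then $\#\{\mbox{components of }\Lambda_n\}\cdot C^{-1}\lambda^{-n}$ is not small, so your exponential bound on $|\Lambda_n|$, and with it the claimed negligibility of the far tail in items 2--3, has no proof.

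Compare with how the paper handles exactly this point. It does not select $I$ by an entropy condition: it takes a nice interval $I=(a_0^-,a_0^+)$ disjoint from the closure of the critical orbit with a boundary point periodic of period equal to the minimal return time $R_0$, and from it constructs the chains of primitive pull-backs $V_i^{\pm}$ and the nested intervals $J_k^{\pm}$ accumulating at $c$, whose return times satisfy \eqref{R1}, in particular $0\le R_{k+1}-R_k\le R(V_0^+)$ and $R_k\asymp k$. Item 1 is then proved by an at most two-to-one correspondence sending a return branch of time $n$ to a return branch of strictly smaller time, not by comparing a cylinder count with an expansion rate. And the tail away from a small neighbourhood of $c$ is controlled not through $|\Lambda_n|\le \#\{\mbox{components}\}\cdot\lambda^{-n}$, but by showing, via the Koebe Principle and the bound $k_A\le k+R(V_0^+)$ coming from \eqref{R1}, that every surviving component loses a uniform proportion of its Lebesgue measure within a bounded number of further steps, which yields the needed exponential decay. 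Without an argument of this type (or some other proof of a positive escape rate together with subexponential branch counting), your proposal establishes the near-$c$ asymptotics only, and items 1, 2 and the upper bound in 3 remain unproven.
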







The rest of this section is dedicated to 
 a proof of Proposition \ref{inducep}. To this end we need a few preliminary considerations.
 \medskip
 
\noindent{\it Notation.}
For two subsets $A$, $B$ of $X$,
write $A<B$ if $\sup A<\inf B$.
For a point $x^+\in X$ with $x^+>c$,
   $x^-$ denotes the point in $X\setminus\{x^+\}$
such that $f(x^-)=f(x^+)$.
For a subset $A^+$ of $X$ with $A^+>\{c\}$ denote $A^-=\{x^-\in X\colon x^+\in A^+\}$.

 \begin{lemma}\label{lemmore}
 Under the assumptions of Proposition \ref{inducep},
 let $(I,\mathcal W,R)$ be an inducing scheme and
let $W_1$, $W_2$ be distinct primitive pull-backs of $I$ such that $R(W_1)=R(W_2)$.
There exists a primitive pull-back $W$ of $I$ such that $W_1<W<W_2$ and $R(W)<R(W_1)$.
\end{lemma}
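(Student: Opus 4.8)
The plan is to work on the circle/interval picture of primitive pull-backs of $I$ and exploit the combinatorics of how orbits enter $I$. Let $r=R(W_1)=R(W_2)$, and suppose without loss of generality $W_1<W_2$. First I would note that since $W_1,W_2$ are primitive pull-backs with the same return time $r$, the map $f^r$ sends each of them diffeomorphically onto $I$, and for $k\in\{1,\dots,r-1\}$ the images $f^k(W_i)$ are disjoint from $I$ (primitivity). The key is to track the critical point: by the Misiurewicz condition $c\notin\omega(c)$, and more to the point, the orbit of $c$ avoids $\widehat I\supset I$ for all positive times, so $f^r(c)\notin I$; hence $c\notin W_1$ and $c\notin W_2$, and in fact the open interval $(\,\sup W_1,\inf W_2\,)$ between them either contains $c$ or does not.

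\textbf{Case analysis on the location of $c$.} If $c$ lies strictly between $W_1$ and $W_2$, I would take $W$ to be the (primitive) pull-back of $I$ containing $c$ — more precisely the component of $f^{-R(c)}(I)$ through $c$, where $R(c)<\infty$ because $f$ is topologically transitive and $I$ is a neighbourhood of $c$; this $W$ satisfies $W_1<W<W_2$, and $R(W)=R(c)$. To see $R(W)<r$: the point $c$ returns to $I$ at time $R(c)$, and since $f^r(c)\notin I$ while $f$-orbits of points in $W_1$ return at time exactly $r$, one argues $R(c)\ne r$; combined with a separation argument (the pull-back through $c$ is the "central" branch, whose return time is the minimal one among pull-backs sufficiently close to $c$, because deeper pull-backs accumulating on $c$ have larger return times — this is exactly the content of Lemma~1.6, $R(x)\sim\ell(x)\log|x-c|^{-1}\to\infty$ as $x\to c$) one gets $R(W)\le R(c)<r$. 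The cleaner route is: among all primitive pull-backs meeting a fixed small neighbourhood of $c$, return times tend to $\infty$, so if $W_1,W_2$ are that close to $c$ their common value $r$ is large, while the central pull-back through $c$ has the smallest return time in that neighbourhood, hence $<r$.

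If $c$ does \emph{not} lie between $W_1$ and $W_2$, then $W_1$ and $W_2$ are on the same side of $c$, say both $>c$ (the other side is symmetric via $x\mapsto x^-$). Here I would use the interval $(\sup W_1,\inf W_2)\subset I$: since $I$ is nice and $W_1\ne W_2$ are primitive pull-backs with equal return time, the gap between them cannot be "clean" all the way — consider the first time $k\le r$ at which $f^k$ of this gap meets $I$. Because $f^r$ maps both $\overline{W_1}$ and $\overline{W_2}$ onto $\overline I$ with the gap mapping outside $I$ at all intermediate times relative to $W_1,W_2$ individually, yet $f^r$ is injective on each, the gap $G=(\sup W_1,\inf W_2)$ must, at some time $k<r$, have $f^k(G)\supset I$ or $f^k(G)\ni$ an endpoint of $I$; niceness rules out the endpoint case, so some subinterval of $G$ is a pull-back of $I$ with return time $\le k<r$, and taking the primitive one containing a point of minimal return gives the desired $W$ with $W_1<W<W_2$ and $R(W)<r$.

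\textbf{Main obstacle.} The delicate point is the second case: rigorously producing a pull-back of $I$ strictly inside the gap $G$ with strictly smaller return time. The mechanism is that $W_1\ne W_2$ with equal return time forces the orbit of $G$ to "see" $I$ before time $r$ (otherwise $G$ would itself be part of a single primitive pull-back of return time $r$, contradicting that $W_1,W_2$ are its distinct endpoints' pull-backs), and niceness of $I$ upgrades "sees an endpoint of $I$" to "covers $I$" on a subinterval. I would make this precise by letting $k=\min\{j\ge1:f^j(\overline G)\cap I\ne\emptyset\}$, checking $k\le r$ and $k\ne r$, examining the component of $f^{-k}(I)$ inside $G$, and invoking the bounded-geometry/Koebe setup from Section~1.1 to ensure this component is itself a diffeomorphic (hence genuine) pull-back, whose primitive refinement is the required $W$; monotonicity of the nesting then gives $W_1<W<W_2$.
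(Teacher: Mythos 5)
The fatal gap is your first case, where $c$ lies strictly between $W_1$ and $W_2$. There your candidate $W$ does not exist: the inducing scheme is set up so that $f^n(c)\notin\widehat I\supset I$ for every $n\ge1$, so the first return time of $c$ to $I$ is infinite and there is no component of $f^{-R(c)}(I)$ through $c$ (topological transitivity does not give $R(c)<\infty$; indeed you invoke $f^r(c)\notin I$ from this very assumption two sentences earlier). Your fallback claim that the ``central'' pull-back has the smallest return time near $c$ is backwards: by the asymptotics you yourself quote from Lemma \ref{P}, $R(x)\sim\ell(x)\log|x-c|^{-1}\to\infty$ as $x\to c$, so return times of primitive pull-backs \emph{grow} as they approach $c$. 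Consequently, for a symmetric pair straddling $c$ (for instance $J_k^-$ and $J_k^+$ of Remark \ref{chebyshev}, primitive with equal return time $k+2$) every primitive pull-back in the gap is closer to $c$ and has \emph{larger} return time, and in the full-branch situation of Remark \ref{chebyshev} there is no pull-back of smaller return time between them at all. So the straddling configuration cannot be handled by any choice of $W$; it has to be excluded. This exclusion is tacit in the paper's own proof (the step ``$n<m$'' needs $c\notin U$, where $U$ is the convex hull of $W_1\cup W_2$, so that $f^m|_U$ would be injective if no image of $U$ contained $c$), and it is automatic where the lemma is applied in Proposition \ref{inducep}, since there $W_1,W_2$ lie on the same side of $I$.

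Your second case is essentially the paper's proof: the paper takes $U$, lets $n$ be the first time $c\in f^n(U)$, shows $n<m$ and $f^k(U)\cap I=\emptyset$ for $1\le k\le n-1$, and takes $W$ to be the pull-back of $I$ by $f^n$ inside $U$; your ``first time $f^j(\overline G)$ meets $I$'' is the same device. But two of your justifications need repair. First, niceness of $I$ is not what rules out $f^k(\overline G)$ ending inside $I$: niceness controls forward images of $\partial I$, not images of $G$. What does the work is primitivity of $W_1,W_2$: for $1\le k\le r-1$ the sets $f^k(\overline{W_1})$ and $f^k(\overline{W_2})$ are disjoint from the open interval $I$, and (since $c\notin\overline G$ and, by minimality of $k$, $c\in I$ never lies in $f^j(\overline G)$ for $j<k$, so $f^k$ is monotone on $\overline G$) they contain the endpoints of the interval $f^k(\overline G)$; hence if $f^k(\overline G)$ meets $I$ it contains $I$, and the pull-back $W=(f^k|_G)^{-1}(I)$ lies in $G$, giving $W_1<W<W_2$, while minimality of $k$ gives primitivity of $W$. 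Second, your parenthetical reason for $k<r$ (``otherwise $G$ would be part of a single primitive pull-back'') is not an argument; the correct one is that otherwise $c$ would lie in no $f^j(U)$ for $0\le j\le r-1$, making $f^r|_U$ injective and contradicting $f^r(W_1)=f^r(W_2)=I$ with $W_1\cap W_2=\emptyset$. With these corrections your second case is sound and coincides with the paper's argument; the first case, as written, is unproved and unrepairable.
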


\begin{proof}
Put $m=R(W_1)=R(W_2)$.
Let $U$ denote the minimal open interval containing $W_1$ and $W_2$.
Let $n>0$ be the smallest integer such that $c\in f^n(U)$. We must have $n<m$. 
Since $f^k(W_1\cup W_2)\cap I=\emptyset$
for every $k\in\{1,2,\ldots,n\}$,  $f^k(U)\cap I=\emptyset$
for every $k\in\{1,2,\ldots,n-1\}$.
Since $W_1$, $W_2$ are primitive, $f^n(W_1\cup W_2)\cap I=\emptyset$.
Define $W$ to be the pull-back of $I$ by $f^n$ which is contained in $U$. 
\end{proof}

Let $(I,\mathcal W,R)$ be an inducing scheme and
$U$ be a subset of $X$. A subset $W$ of $U$ is {\it the minimal pull-back of $I$ in $U$} 
if it is a primitive pull-back of $I$ and for any other primitive pull-back $W'$ of $I$ which is contained in $U$,
  $R(W')> R(W)$ holds.


   \begin{proof}[Proof of Proposition \ref{inducep}]
   Since $f$ is topologically transitive, it is topologically mixing and
       $c$ is accumulated by periodic points from both sides.
There exist a nice interval $I=(a_0^-,a_0^+)$ 
       which satisfies $\overline{I}\cap\overline{\{f^n(c)\colon n\geq1\}}=\emptyset$, and
         $f^{R_0}(a_0^-)=a_0^-$ or  $f^{R_0}(a_0^+)=a_0^+$ where $R_0=\min\{n>0\colon f^n(I)\cap I\neq\emptyset\}$.     
         Without loss of generality we may assume $f^{R_0}(a_0^+)=a_0^+$.
Let $\mathcal W$ denote the collection of primitive pull-backs of $I$ and $R$ the first return time to $I$.
     We show that the inducing scheme $(I,\mathcal W,R)$ satisfies the desired properties.

Put
$$\eta=\min\{|f^n(a_0^+)-x|\colon n\geq1,x\in\partial I\}.$$
Since $I$ is a nice interval, $\eta>0$ holds.
    Let $V_0^+$ denote the minimal pull-back of $I$ in $(a_0^+,a_0^++\eta)$.
 Let $i\geq0$ and suppose a sequence $V_0^+,\ldots,V_i^+$ of primitive pull-backs of $I$ has been defined.
From Lemma \ref{lemmore}
 there are two cases: 
 \smallskip
 
 (i) for any primitive pull-back $W$ of $I$ such that $V_i^+<W$,
$R(V_i^+)<R(W)$; 

(ii) there exists a primitive pull-back $W$ of $I$ such that $V_i^+<W$ and $R(V_i^+)>R(W)$.
\smallskip

In case (i) we stop the construction. In case (ii) 
write $V_i^+=(a,b)$ and define $V_{i+1}^+$ to be the minimal pull-back of $I$ 
in $(b,1)$. 
This construction stops in finite time and we end up with
a sequence $\{V_i^+\}_{i=0}^{N}$ of primitive pull-backs of $I$
with the following properties:

\begin{itemize}
\item[1.] $I<V_0^+<V_1^+<\cdots<V_N^+$; 

\item[2.] $R(V_0^+)>R(V_1^+)>\cdots>R(V_{N}^+)=1$;

\item[3.] if $\partial V_0^+\cap\partial I=\emptyset$ and  $W$ is a primitive pull-back of $I$ such that
 $I<W<V_0^+$, then $R(W)>R(V_0^+)$;

\item[4.] If $W$ is a primitive pull-back of $I$ such that $V_{i}^+<W<V_{i+1}^+$ holds
for some $i\in\{0,\ldots, N-1\}$, then $R(W)> R(V_{i}^+)$. 
\end{itemize}

We construct
a sequence $\{J_k^+\}_{k\geq0}$ of open subintervals of $I$,
 a sequence $\{R_k\}_{k\geq0}$ of positive integers inductively as follows.
Define $J_0^+$ to be the pull-back of $I$ by $f^{R_0}$ which is contained in $I$ and 
satisfies $\{c\}<J_0^+$. 
Now, let $k\geq0$ and suppose we have defined
$J_0^+,J_1^+,\ldots,J_{k}^+$ and $R_0,R_1,\ldots,R_k$
with the desired properties 
such that the set
$ f^{R_k}\left(I\setminus(\bigcup_{0\leq n\leq k}  J_n^+)\right)$
does not intersect $I$. Since this set contains $(a_0^--\eta,a_0^-)$ or $(a_0^+,a_0^++\eta)$,
 the minimal pull-back of $I$ in this set, denoted by
$W$, belongs to $\{V_{i}^-\}_{0\leq i\leq N}\cup\{V_{i}^+\}_{0\leq i\leq N}$.
Let $J$ denote the pull-back of $I$ by $f^{R_k+R(W)}$ which is 
contained in $I\setminus(\bigcup_{0\leq n\leq k} J_n^-\cup J_n^+)$ and
satisfies $\{c\}<J$. 
If $\partial J_k^+\cap\partial J\neq\emptyset$, then set $J_{k+1}^+=J$
and $R_{k+1}=R_k+R(W)$.
Otherwise, set $J_{k+2}^+=J$ and
define $J_{k+1}^+$ to be the maximal open interval sandwiched by $J_{k}^+$ and $J_{k+2}^+$.
Set $R_{i}=R_{k}+R(W)$ for $i\in\{k+1,k+2\}$.

From the construction the following holds:

\begin{itemize}

\item[1.] $\{c\}<\cdots<J_{k+1}^+<J_k^+<\cdots<J_0^+$;


\item[2.] 
for each $k\geq0$ one of the following holds:

\begin{itemize}
\item[(i)] $J_{k}^+\in \mathcal W$ 
and $R(J_k^+)=R_k$;

\item[(ii)]  $J_{k}^+\notin \mathcal W$,
 $I\cap f^i(J_{k}^+)=\emptyset$ for every $i\in\{1,2,\ldots,R_{k}\}$,
$\partial I\cap\partial f^{R_{k}}(J_{k}^+)\neq\emptyset$;

\end{itemize}

\item[3.] If $J_{k+1}^+\notin\mathcal W$, then $J_{k}^+\in\mathcal W$
and $R_{k}=R_{k+1}$.

\end{itemize}
It follows that for every $k\geq0$,
\begin{equation}\label{R1}0\leq R_{k+1}-R_{k}\leq R(V_0^+)\quad\text{and}\quad
\frac{k}{2}\leq R_k-R_0\leq R(V_0^+)k.\end{equation}

\begin{remark}\label{chebyshev}
{\rm In the case $f(c)=1$ and $f(0)=0=f(1)$ like the Chebyshev quadratic $f(x)=4x(1-x)$, 
we choose $I=(a_0^-,a_0^+)$ where $a_0^+$ is the fixed point which is not $0$.
Let $b^-<c$ be such that $f(b^-)=a_0^-$. We have
$V_0^-=(b^-,a_0^-)$, $N=0$ and
 $R_k=k+2$ for every $k\geq0$.
In particular, for every $k\geq0$, $J_k^\pm$ are primitive pull-backs of $I$ with $R(J_k^\pm)=k+2$.}
\end{remark}

\medskip

\noindent{\it Proof of Proposition \ref{inducep}-1.}
Let $\gamma>1$ be such that $\gamma(3-\gamma)\geq2.$
Put $c_\gamma=2\gamma^{-R_0}$.
Then $\#S(R_0)=2=c_\gamma\gamma^{R_0}$ holds.
We argue by induction. Let $m\geq R_0$ and suppose that 
$\#S(n)\leq c_\gamma\gamma^n$ holds for every $n\in\{R_0,\ldots,m\}$.

If $J\in\mathcal W$ and $R(J)=n$, then there exist $\tilde J\in\mathcal W$, $\tilde n\in[n-q,n-1]$ such that
$R(\tilde J)=\tilde n$ and a pull-back $\tilde W$ of $I$ by $f^{\tilde n}$ which contains both $J$ and $ \tilde J$.
The triplet $(\tilde J,\tilde n,\tilde W)$ with this property is unique for each $J$, and 
 the correspondence 
$J\in S(n)\mapsto (\tilde J,\tilde n,\tilde W)$ is at most two-to-one.
This yields
$$\#S(m+1)\leq2\sum_{k=1}^{m+1-R_0}\#S(m+1-k).$$
The condition on $\gamma$ implies 
$\gamma^{m+1}(3-\gamma)\geq2\gamma^{R_0}$,
and therefore
\begin{align*}\#S(m+1)&\leq 2\sum_{k=1}^{m+1-R_0}c_\gamma\gamma^{m+1-k}\\
&=2c_\gamma\frac{1-\gamma^{m+1-R_0}}{1-\gamma}\gamma^{R_0}\\
&\leq c_\gamma\gamma^{m+1}.\end{align*}

\noindent{\it Proof of Proposition \ref{inducep}-2.}
Write $J_k^+=(a_{k+1}^+,a_k^+)$ and
put $I_k=[a_k^-,a_k^+]$.
For each large integer $n$ fix an integer $\bar n\geq1$ such that $n/3\leq R_{\bar n}\leq n/2$.
We have $$\{R>n\}\subset \{R>n\}\cap  (I\setminus I_{\bar n})\cup I_{\bar n}.$$
We show that $|\{R>n\}\cap  (I\setminus I_{\bar n})|$ decays exponentially in $n$.
For each integer $k\in\{0,1,\ldots,n\}$
let $\mathscr{A}_k$ denote the collection of connected components of 
$\{R>k\}\cap  (I\setminus I_{\bar n})$,
and
 $\mathscr{A}_k''$ the collection of elements of $\mathscr{A}_k$
whose boundaries contain $a_{\bar n}^+$ or $a_{\bar n}^-$.
Put $\mathscr{A}_k'=\mathscr{A}_k\setminus \mathscr{A}_k''$,
 $|\mathscr{A}_k'|=\sum_{A\in\mathscr{A}_k'}|A|$ and $|\mathscr{A}_k''|=\sum_{A\in\mathscr{A}_k''}|A|$.
Obviously $\#\mathscr{A}_k''=2$ holds for every $k\geq0$. From Ma\~n\'e's hyperbolicity theorem,
$|\mathscr{A}_n''|$ decays exponentially in $n$.

For each $A\in\mathscr{A}_k'$ put $k_A=\min\{m>k\colon \{R=m\}\cap A\neq\emptyset\}$.
\eqref{R1} gives $k_A\leq k+R(V_0^+)$ for every $k\geq R_{\bar n}$.
Since $a_0^+$ is a periodic point, it is possible to choose
 $\tau'\in(0,1)$ so that for each $k\geq0$ and each $A\in\mathscr{A}_k'$ there exists a subinterval of  $X$ on which
$f^{k_A}$ is a diffeomorphism and  the image contains the concentric open interval with $f^{k_A}(A)$ of length $(1+2\tau')|f^{k_A}(A)|$.
Put $\theta=1-\left(\frac{\tau'}{1+\tau'}\right)^2|I|\in(0,1)$.
By the Koebe Principle and $|f^{k_A}(A)|\leq1$,
\begin{align*}\frac{|A\setminus\mathscr{A}_{k_A}'|}{|A|}&=\frac{|\{x\in A\colon f^{k_A}(x)\in I\}|}{|A|}\\
&\geq\left(\frac{\tau'}{1+\tau'}\right)^2\frac{|I|}{|f^{k_A}(A)|}\\
&\geq1-\theta,\end{align*}
and thus
$|A\cap\mathscr{A}_{k_A}'|\leq\theta|A|$. By the monotonicity, $|A\cap\mathscr{A}_{k+R(V_0)^+}'|\leq\theta|A|$ holds.
Summing this over all $A\in\mathscr{A}_k'$ we obtain 
$|\mathscr{A}_{k+R(V_0^+)}'|\leq\theta|\mathscr{A}_k'|$.
Since $R_{\bar n}\leq n/2$ and therefore
$|\mathscr{A}_{n}'|\leq\theta^{[2n/R(V_0^+)]}|$
where $[$ $\cdot$ $]$ denotes the integer part.



We now estimate $|I_{\bar n}|$.
By \eqref{criexp} there exists a constant $C\geq1$ such that
 $C^{-1}\leq|a_{\bar n}^+-c|^{\ell(a_{\bar n}^+)}|Df^{R_{\bar n}-1}(f(c))|\leq C$.
From this and  \eqref{critexp}, for sufficiently large $n$
we have 
\begin{equation}\label{twice}
-\lambda_1n\leq\ell(a_{\bar n}^+)\log|a_{\bar n}^+-c|\leq -\frac{\lambda_0n}{4}.
\end{equation}
This remains true for $a_{\bar n}^-$ instead of $a_{\bar n}^+$.
 From \eqref{twice} and the assumption on $\ell$
 there exists $C>0$ such that for sufficiently large $n$,
\begin{equation}\label{in'}|I_{\bar n}|\leq e^{-(Cn)^{\frac{1}{\alpha+1}}}.\end{equation}
We obtain the desired stretched-exponential decay.
\medskip

\noindent{\it Proof of Proposition \ref{inducep}-3.}
 From \eqref{twice} and the assumption on $\ell$, for every $\beta>v(c)$ there exists $n'\geq1$ such that 
 for every $n\geq n'$,
  \begin{equation}\label{in}
\lambda_1^{-\frac{1}{v(c)}}n^{-\frac{1}{v(c)}}\leq|I_{\bar n}|\leq       \left (\frac{\lambda_0}{4}\right)^{-\frac{1}{\beta}}n^{-\frac{1}{\beta}}.
\end{equation}
We obtain the desired polynomial decay.
 \end{proof}

  \section{Proofs of the main results}
In this section we complete the proofs of the theorems.
The first three sections are dedicated to the proof of Theorem A.
In Sect.\ref{s1} we summarize the results from \cite{MauUrb03,Sar99}
needed for the thermodynamic formalism for the induced map.
In Sect.\ref{s00} we show the existence and uniqueness of an equilibrium state among measures
which are liftable to the inducing scheme.
In Sect.\ref{s11} we  eliminate the possibility 
that non-liftable measures attain the supremum in $\mathscr{P}(t)$,
and complete the proof of Theorem A.
In Sect.\ref{bc} we prove Theorem B and Theorem C.
We prove Theorem D in 
Sect.\ref{lay} and Sect.\ref{flimit}.

Throughout this section, let $f$ be a topologically transitive $S$-unimodal map with a flat critical point $c$ satisfying the Misiurewicz condition.
Fix an inducing scheme $(I,\mathcal W, R)$ for which the conclusions of Proposition \ref{inducep}
hold. Let $\widehat f$ denote the associated induced map.

\subsection{Thermodynamic formalism for the induced map}\label{s1}
 {\it A word of length} $n\geq1$ is a string of $n$-elements of $\mathcal{W}$.
   The set of words of length $n$ is denoted by $W_n$.
  For each $J_0J_1\cdots J_{n-1}\in W_n$
put $R_{J_0J_1\cdots J_{n-1}}=R(J_0)+R(J_1)+\cdots+R(J_{n-1}),$ and
 let $\omega_{J_0J_1\cdots J_{n-1}}$ denote the diffeomorphic pull-back 
of $J$ by the composition $(f^{R(J_{n-1})}|_{J_{n-1}})\circ \cdots\circ (f^{R(J_1)}|_{J_1})\circ (f^{R(J_0)}|_{J_0}).$

For each $t,p\in\mathbb R$, $n\geq1$ and $J\in\mathcal W$ define
$$Z_n(t,p;J)=\sum_{\stackrel{J_0\ldots J_{n-1}\in W_n}{J_{0}=J}}e^{-R_{J_0\cdots J_{n-1}}p}
\sup_{x\in\omega_{J_0\cdots J_{n-1}}}|(D\widehat f)^n(x)|^{-t}.$$
\eqref{holder} implies that the function 
$\varphi=-t\log|D\widehat f|-pR$ defines a weakly H\"older continuous potential on the countable infinite symbolic space
associated with $\widehat f$.
Hence the limit
$$\mathscr{P}(t,p)=\lim_{n\to\infty}\frac{1}{n}\log Z_n(t,p;J)$$
exists (including $+\infty$), which is never $-\infty$ and independent of $J\in\mathcal W$ \cite[Theorem 1]{Sar99}.

Assume that $\mathscr{P}(t,p)$ is finite. 
Then the Variational Principle \cite{Sar99} holds:
$$\mathscr{P}(t,p)=\sup\left\{h_{\widehat \mu}(\widehat f)+\int \varphi d\widehat\mu\colon\widehat\mu\in\mathcal M(\widehat f), \int \varphi d\widehat\mu>-\infty\right\}.$$
Here, $h_{\widehat\mu}(\widehat f)$ denotes the Kolmogorov-Sina{\u\i}  entropy of $(\widehat f,\widehat\mu)$.
There exists a unique measure, denoted by $\widehat\mu_{*}$, 
 which attains this supremum. The $\widehat\mu_*$ is {\it a Gibbs measure}: there exist constants $C_1,C_2>0$ such that 
for every $n\geq1$ and every $J_0\cdots J_{n-1}\in W_n$,
\begin{equation}\label{gibbs}C_1\leq\frac{\widehat\mu_*(\omega_{J_0\cdots J_{n-1}})}{e^{-\mathscr{P}(t,p)n}e^{-R_{J_0\cdots J_{n-1}}p}
\sup_{x\in\omega_{J_0\cdots J_{n-1}}}|(D\widehat f)^n(x)|^{-t}}\leq C_2.\end{equation}

\subsection{Construction of equilibrium states among liftable measures}\label{s00}
Let $\mathcal M_L(f)$ denote the set of elements of $\mathcal M(f)$ which are liftable.
For $t\in\mathbb R$ define
$$\mathscr P_L(t)=\sup\left\{h(\mu)-t\int\log|Df|d\mu\colon\mu\in\mathcal M_L(f)\right\}.$$
A measure which attains this supremum is called {\it an equilibrium state for the potential $-t\log|Df|$ among liftable measures}.
Set
 \begin{equation*}t^-_L=\inf\{t\in\mathbb R\colon \mathscr P_L(t)+t\chi_{\rm sup}>0\},\quad
t^+_L=\sup\{t\in\mathbb R\colon \mathscr P_L(t)+t\chi_{\rm inf}>0\}.\end{equation*}
Note that $t^-\leq t^-_L$ and $t^+_L\leq t^+$.

\begin{prop}\label{uniquel}
Let $f$ be a topologically transitive $S$-unimodal map with flat critical point satisfying the Misiurewicz condition.
For every  $t\in(t^-_L,t^+_L)$ there exists a unique equilibrium state for the potential $-t\log|Df|$ among liftable measures.
In addition, $t\in(t^-_L,t^+_L)\mapsto \mathscr P_L(t)$ is analytic.
\end{prop}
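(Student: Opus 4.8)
The plan is to reduce the statement to the thermodynamic formalism for the induced map $\widehat f$ recalled in Sect.\ref{s1}, via the liftability dictionary of Sect.\ref{ischeme}, and then apply the abstract regularity results of Mauldin--Urba\'nski \cite{MauUrb03} and Sarig \cite{Sar99}. First I would fix $t\in(t^-_L,t^+_L)$ and observe that by definition of $t^\pm_L$ we have $\mathscr P_L(t)+t\chi_{\rm inf}>0$ and $\mathscr P_L(t)+t\chi_{\rm sup}>0$; since $\chi_{\rm inf}=0$ by Lemma \ref{lack}, the first of these reads $\mathscr P_L(t)>0$. The key translation is Abramov's formula together with Kac's formula: for $\widehat\mu\in\mathcal M(\widehat f)$ with $\int R\,d\widehat\mu<\infty$ and $\mu=\mathcal L(\widehat\mu)$ one has $h(\mu)=h_{\widehat\mu}(\widehat f)/\int R\,d\widehat\mu$ and $\int\log|Df|\,d\mu=\int\log|D\widehat f|\,d\widehat\mu/\int R\,d\widehat\mu$. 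Hence for the induced potential $\varphi=-t\log|D\widehat f|-pR$,
$$
h_{\widehat\mu}(\widehat f)+\int\varphi\,d\widehat\mu=\left(\int R\,d\widehat\mu\right)\bigl(h(\mu)-t\textstyle\int\log|Df|\,d\mu-p\bigr),
$$
so that $\mathscr P(t,p)=0$ exactly when $p=\mathscr P_L(t)$, and an $\widehat f$-invariant measure realizing the supremum in the Variational Principle at $p=\mathscr P_L(t)$ projects under $\mathcal L$ to an equilibrium state for $-t\log|Df|$ among liftable measures; conversely, by Lemma \ref{liftlem} every liftable equilibrium state lifts to such a measure (note $\mu(I)>0$ for any equilibrium state since $\mathscr P_L(t)>0$ forces positive entropy, hence the support is not contained in $\omega(c)$).

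Thus it suffices to show: (a) $\mathscr P(t,\mathscr P_L(t))=0$, in particular that $\mathscr P(t,p)$ is finite for $p$ slightly below $\mathscr P_L(t)$ so that the Variational Principle and the Gibbs measure $\widehat\mu_*$ of Sect.\ref{s1} are available; (b) the equilibrium measure $\widehat\mu_*$ has $\int R\,d\widehat\mu_*<\infty$, so that it actually descends via $\mathcal L$; (c) uniqueness. For (a), the function $p\mapsto\mathscr P(t,p)$ is non-increasing, convex, and by \cite[Theorem 1]{Sar99} lower semicontinuous; finiteness for $p<\mathscr P_L(t)$ close to it follows from the estimate $\#S(n)<c_\gamma\gamma^n$ of Proposition \ref{inducep}-1 combined with the uniform expansion $|(D\widehat f)^m|\ge\lambda$ and bounded distortion \eqref{holder}: the partition function $Z_n(t,p;J)$ is dominated by $\sum_n \#S_n(\text{word lengths})\cdots$, which converges once $p$ exceeds a threshold strictly below $\mathscr P_L(t)$ by the definition of $\mathscr P_L$ via liftable measures (here one uses that $\mathscr P(t,p)$ and $\mathscr P_L$ are related as above, so finiteness of $\mathscr P_L(t)$ propagates). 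Then $\mathscr P(t,\mathscr P_L(t))=0$ follows by continuity of $\mathscr P(t,\cdot)$ on the interior of its finiteness domain together with the identity $\{\mathscr P(t,p)=0\}\Leftrightarrow\{p=\mathscr P_L(t)\}$ derived above. For (b), since $\widehat\mu_*$ is Gibbs, \eqref{gibbs} gives $\widehat\mu_*([J])\sim e^{-\mathscr P_L(t)R(J)}\sup_J|D\widehat f|^{-t}$, and integrability of $R$ against it reduces to summability of $\sum_{J}R(J)e^{-\mathscr P_L(t)R(J)}\sup_J|D\widehat f|^{-t}=\sum_n n\,Z_n\text{-type term}$ evaluated at $p=\mathscr P_L(t)$, which converges because the corresponding series at $p=\mathscr P_L(t)$ still has geometric decay room — this is exactly where one invokes that $t$ is in the \emph{open} interval $(t^-_L,t^+_L)$, equivalently that $\mathscr P(t,p)$ is finite and differentiable in $p$ near $p=\mathscr P_L(t)$, so $-\partial_p\mathscr P(t,p)=\int R\,d\widehat\mu_*<\infty$.

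For uniqueness, suppose $\mu,\mu'$ are two equilibrium states among liftable measures; both lift to measures realizing the supremum in the Variational Principle at $p=\mathscr P_L(t)$, and by \cite[Theorem 1]{Sar99}/\cite{MauUrb03} that maximizer is unique (equal to $\widehat\mu_*$), while the lift of a liftable measure is unique by Zweim\"uller \cite{Zwe05} (as used in Lemma \ref{liftlem}); hence $\mu=\mathcal L(\widehat\mu_*)=\mu'$. Finally, analyticity of $t\mapsto\mathscr P_L(t)$ on $(t^-_L,t^+_L)$: by the implicit relation $\mathscr P(t,\mathscr P_L(t))=0$ it suffices to know that $(t,p)\mapsto\mathscr P(t,p)$ is real-analytic on the interior of its finiteness domain and that $\partial_p\mathscr P(t,p)\ne0$ there. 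Real-analyticity is the standard consequence of the spectral-gap/analytic-perturbation theory for the transfer operator of a topologically mixing countable Markov shift with a locally H\"older potential having finite pressure (Sarig, Mauldin--Urba\'nski), applicable because $\varphi=-t\log|D\widehat f|-pR$ is weakly H\"older by \eqref{holder}; and $\partial_p\mathscr P(t,p)=-\int R\,d\widehat\mu_*<0$ by (b). The implicit function theorem then yields analyticity of $\mathscr P_L$. I expect step (b) — showing $\int R\,d\widehat\mu_*<\infty$, i.e. that the Gibbs/equilibrium measure of the induced system has integrable return time throughout the \emph{open} positive-entropy phase rather than only on a smaller interval — to be the main technical obstacle, since it is precisely the statement that the phase transition occurs only at the endpoints; it should follow from strict monotonicity and finiteness of $p\mapsto\mathscr P(t,p)$ near $p=\mathscr P_L(t)$, which in turn rests on the subexponential control of $\#S(n)$ from Proposition \ref{inducep}-1 and the characterization $t^+_L\le t^+\le 1$ from Proposition \ref{lackexp}.
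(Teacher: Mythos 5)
Your proposal is correct and follows essentially the same route as the paper: finiteness of the induced pressure near $(t,\mathscr P_L(t))$ via the counting estimate of Proposition \ref{inducep}-1 and the defining inequalities of $t_L^\pm$, the identification $\mathscr P(t,\mathscr P_L(t))=0$ through the Abramov/Kac dictionary, integrability of $R$ for the Gibbs measure from \eqref{gibbs}, and analyticity via \cite{MauUrb03} plus the implicit function theorem. The only (minor) deviation is that you obtain $\partial_p\mathscr P(t,\mathscr P_L(t))<0$ from the formula $\partial_p\mathscr P=-\int R\,d\widehat\mu_*$, whereas the paper's Lemma \ref{delp} proves it by a difference-quotient argument with Gibbs constants uniform in $\varepsilon$; both are fine since $R\geq1$.
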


We construct the equilibrium state in Proposition \ref{uniquel} from a Gibbs measure for the induced map.
Hence we need to prove the finiteness of $\mathscr{P}(t,p)$.
\begin{lemma}\label{pr}
Let $f$ be a topologically transitive $S$-unimodal map satisfying the Misiurewicz condition.
If $t_0\in(t^-_L,t^+_L)$, then $\mathscr{P}(t,p)$ is finite for all $(t,p)$ in a neighborhood of $(t_0,\mathscr P_L(t_0))$.
\end{lemma}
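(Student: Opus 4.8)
The plan is to show that finiteness of $\mathscr{P}(t,p)$ at the single point $(t_0,\mathscr{P}_L(t_0))$ propagates to a neighborhood, and then to establish finiteness at that point by a direct estimate on the partition functions $Z_n(t,p;J)$. First I would record the monotonicity and convexity properties of $(t,p)\mapsto \mathscr{P}(t,p)$ that follow from the formula $\mathscr{P}(t,p)=\lim_n\frac1n\log Z_n(t,p;J)$: since the potential $-t\log|D\widehat f|-pR$ is affine in $(t,p)$ and $\log Z_n$ is convex in the parameters, $\mathscr{P}$ is convex on $\mathbb R^2$ (with values in $(-\infty,+\infty]$, never $-\infty$ by \cite[Theorem~1]{Sar99}); moreover it is nonincreasing in $p$ (as $R\geq1>0$) and, on the relevant range $t>0$, nonincreasing in $t$ as well since $|D\widehat f|\geq 1$ by the uniform expansion established in Sect.\ref{ischeme}. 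A convex function on $\mathbb R^2$ that is finite at a point is finite (indeed locally bounded, hence continuous) on the interior of its effective domain; so it suffices to exhibit \emph{one} point $(t_1,p_1)$ with $t_1<t_0$, $p_1<\mathscr{P}_L(t_0)$ (or more precisely a point strictly ``below and to the left'') at which $\mathscr{P}$ is finite, together with finiteness at one point ``above and to the right,'' to sandwich $(t_0,\mathscr{P}_L(t_0))$ in the interior of the domain. The cleanest route, though, is to prove finiteness at $(t_0,\mathscr{P}_L(t_0))$ itself plus finiteness at some $(t_0-\varepsilon, \mathscr{P}_L(t_0)-\varepsilon)$, which I would get from the same estimate below run with slightly perturbed exponents.

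The core estimate is a bound on $Z_n(t,p;J)$, for which I would use Proposition \ref{inducep}-1 to control $\#S(n)$ together with the bounded-distortion/expansion estimates of Sect.\ref{ischeme}. Writing $n$ for the word-length parameter, group the words $J_0\cdots J_{n-1}$ by the total return time $N=R_{J_0\cdots J_{n-1}}$: there are at most (roughly) $\#\{$compositions of $N$ into $n$ parts, each part the return time of some cylinder$\}$ such words, and by the Gibbs/expansion bounds $\sup_{\omega_{J_0\cdots J_{n-1}}}|(D\widehat f)^n|^{-t}$ is comparable to a product of single-branch factors, each of which is dominated (for $t>0$, using Lemma \ref{P} and the definition of the inducing scheme) by an exponentially small quantity in the corresponding return time. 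Since $t_0\in(t_L^-,t_L^+)$, the definition of $t_L^+$ gives $\mathscr{P}_L(t_0)+t_0\chi_{\inf}>0$; combined with the relation $\mathscr{P}_L(t_0)=\mathscr{P}(t_0,\mathscr{P}_L(t_0))\cdot 0 + \dots$ — more precisely the standard inducing identity $\mathscr{P}_L(t)=\sup\{p:\mathscr{P}(t,p)\geq 0\}$ whose one easy inequality I would invoke — this forces the series defining $\mathscr{P}(t_0,\mathscr{P}_L(t_0))$ to converge. Concretely, I expect the bound to take the shape
\begin{equation*}
Z_n(t_0,\mathscr{P}_L(t_0);J)\leq C^n\Bigl(\sum_{N\geq1}\#S(N)\,e^{-\mathscr{P}_L(t_0)N}\,e^{-t_0\lambda_0 N}\Bigr)^{\!n},
\end{equation*}
and Proposition \ref{inducep}-1 (choosing $\gamma>1$ small enough that $\gamma e^{-\mathscr{P}_L(t_0)-t_0\lambda_0}<1$, which is possible because $\mathscr{P}_L(t_0)+t_0\chi_{\inf}>0$ and $\chi_{\inf}\geq\lambda_0$ up to constants) makes the inner sum finite, giving $\mathscr{P}(t_0,\mathscr{P}_L(t_0))\leq\log(C\cdot(\text{that sum}))<+\infty$.

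For the neighborhood statement I would then combine the two ingredients: finiteness at $(t_0,\mathscr{P}_L(t_0))$ plus an identical estimate with $t_0,\mathscr{P}_L(t_0)$ replaced by $t_0-\varepsilon,\mathscr{P}_L(t_0)-\varepsilon'$ for small $\varepsilon,\varepsilon'>0$ (still valid since the strict inequality $\mathscr{P}_L(t_0)+t_0\chi_{\inf}>0$ is open), placing $(t_0,\mathscr{P}_L(t_0))$ strictly inside a region of finiteness; convexity of $\mathscr{P}$ then yields finiteness, hence continuity, on a full neighborhood. The main obstacle I anticipate is purely bookkeeping: getting the counting of words with prescribed total return time to interact cleanly with the per-branch derivative bounds, i.e.\ organizing the estimate so that the Gibbs-type product over branches telescopes into the sum $\sum_N\#S(N)e^{-(\mathscr{P}_L(t_0)+t_0\lambda_0)N}$ without losing a factor that grows with $n$. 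The uniform Koebe distortion constant $K_\tau$ from Sect.\ref{ischeme} is exactly what keeps the per-branch factors multiplicatively comparable across a whole word, so that the only $n$-dependence is the harmless $C^n$; once that is set up, convexity does the rest.
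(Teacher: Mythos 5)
Your overall strategy is the same as the paper's: dominate $Z_n(t,p;J)$ by the $n$-th power of a single-branch sum $\sum_{J\in\mathcal W}e^{-R(J)p}\sup_J|D\widehat f|^{-t}$, control $\#S(N)$ by Proposition \ref{inducep}-1 with $\gamma$ close to $1$, and get convergence from the defining inequality of $t_L^{\pm}$; note that this bound is itself open in $(t,p)$, which already yields the neighborhood, so the convexity scaffolding is unnecessary (and as literally stated, finiteness at two points would only give finiteness on a segment, not a two-dimensional neighborhood). The genuine gap is in the per-branch estimate. You bound each single-branch factor by a quantity ``exponentially small in the corresponding return time'' with a uniform rate $\lambda_0>0$ and then assert ``$\chi_{\rm inf}\geq\lambda_0$ up to constants''. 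In the paper's setting the critical point is flat, so $\chi_{\rm inf}=0$ (Lemma \ref{lack}) and no uniform positive rate exists: there are branches $J$ containing periodic points with $\frac{1}{R(J)}\log|Df^{R(J)}|$ arbitrarily small, so for $t>0$ the factor $\sup_J|D\widehat f|^{-t}$ is only subexponentially small in $R(J)$. Moreover your inequality points the wrong way for what you need: a valid per-branch lower bound $|D\widehat f|\geq Ce^{\lambda_0 R(J)}$ forces $\lambda_0\leq\chi_{\rm inf}$, and then $\mathscr P_L(t_0)+t_0\lambda_0>0$ does not follow from $\mathscr P_L(t_0)+t_0\chi_{\rm inf}>0$ unless $\lambda_0=\chi_{\rm inf}$. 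The correct (and essentially only) choice is the rate $\chi_{\rm inf}$ itself, obtained as in \eqref{finiteG}: each $J\in\mathcal W$ contains a periodic point of period $R(J)$, whose orbit measure is $f$-invariant, whence $|Df^{R(J)}|\geq e^{\chi_{\rm inf}R(J)}$ at that point, and the Koebe distortion bound transfers this to all of $J$; with this exponent the convergence condition becomes exactly $\log\gamma<\mathscr P_L(t_0)+t_0\chi_{\rm inf}$, which is precisely what $t_0<t_L^+$ provides. (Lemma \ref{P}, which you invoke instead, gives the asymptotics of $|Df^{R(x)}(x)|$ near $c$ and cannot produce an exponential-in-$R$ lower bound for a flat critical point.)

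A second omission: you only treat $t_0>0$. Since $t_L^-<0$, the lemma also covers $t_0\in(t_L^-,0]$, where one must instead use the upper estimate $\sup_J|D\widehat f|\leq C e^{\chi_{\rm sup}R(J)}$ from \eqref{finiteG} together with the other defining inequality $\mathscr P_L(t_0)+t_0\chi_{\rm sup}>0$; this symmetric half of the argument, present in the paper, is missing from your proposal.
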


\begin{proof}
For $t,p\in\mathbb R$ define
$$T(t,p)=\sum_{J\in \mathcal W}e^{-R(J)p}\sup_{x\in J}|D\widehat f(x)|^{-t}.$$
It suffices to show $T(t,p)$ is finite for all $(t,p)$ in a neighborhood of $(t_0,\mathscr P_L(t_0))$.
Indeed, if this is the case then 
\begin{align*}\mathscr{P}(t,p)&=\lim_{n\to\infty}
\frac{1}{n}\log  \sum_{\stackrel{J_0\ldots J_{n-1}\in W_n}{J_{0}=J}}e^{-R_{J_0\cdots J_{n-1}}p}
\sup_{x\in\omega_{J_0\cdots J_{n-1}}}|(D\widehat f)^n(x)|^{-t}\\
&\leq\lim_{n\to\infty}
\frac{1}{n}\log\left(\sum_{J\in \mathcal W}e^{-R(J)p}\sup_{x\in J}|D\widehat f(x)|^{-t}\right)^n\\
&=\log T(t,p).\end{align*}
Note that
$$T(t,p)=\sum_{n>0}\sum_{J\in \mathcal W,R(J)=n}e^{-R(J)p}\sup_{x\in J}|D\widehat f(x)|^{-t}.$$
Since each $J\in\mathcal W$ contains a periodic point of period $R(J)$, from the bounded distortion
there exist constants $C_1$, $C_2$ such that for every
$J\in \mathcal W$,
\begin{equation}\label{finiteG}
C_1e^{\chi_{\rm inf}R(J)}\leq\sup_{x\in J}|D\widehat f(x)|\leq C_2e^{\chi_{\rm sup}R(J)}.\end{equation}
If $0< t_0< t^+_L,$ then using the first inequality in \eqref{finiteG} and Proposition \ref{inducep}-1,
$$T(t,p)\leq C_1^{-t} \sum_{n>0}\#S(n)e^{-np-t\chi_{\rm inf}n}\leq C_1^{-t}\sum_{n>0}c_\gamma\gamma^ne^{-np-t\chi_{\rm inf}n}.$$
Since $-\mathscr P_L(t_0)-t_0\chi_{\rm inf}<0$, this series converges
for $(t,p)$ sufficiently close to $(t_0,\mathscr P_L(t_0))$ provided $\gamma$ is chosen sufficiently close to $1$.
If $t^-_L< t_0\leq0$ we argue similarly, using the second inequality in \eqref{finiteG} and
 $-\mathscr P_L(t_0)-t_0\chi_{\rm sup}<0$.
\end{proof}

\begin{proof}[Proof of Proposition \ref{uniquel}]
Let $t\in(t^-_L,t^+_L)$.
By Lemma \ref{pr}, $\mathscr P(t,\mathscr P_L(t))$ is finite.
Hence, there exists a unique equilibrium state for the potential
$-t\log|D\widehat f|-\mathscr{P}_L(t)R$, denoted by $\widehat\mu_{t}$.
Then $\int Rd\widehat\mu_t$ is finite, as shown in the next paragraph. Put
$\mu_t=\mathcal L(\widehat\mu_t)$. 
We have
\begin{align*}
\mathscr{P}(t,\mathscr P_L(t))&=h_{\widehat\mu_t}(\widehat f)+\int
-t\log|D\widehat f|-\mathscr P_L(t)Rd\widehat\mu_t\\
&=\int Rd\widehat\mu_t
\left(h_{\mu_t}(f)+\int-t\log|Df|-\mathscr P_L(t)d\mu_t\right)\leq0.
\end{align*}
On the other hand, for every $\varepsilon>0$ there is $\mu\in\mathcal M_L(f)$ such that
$$h_\mu(f)-t\int\log|Df| d\mu\geq \mathscr P_L(t)-\varepsilon.$$
Since $\mu$ is liftable, there exists $\widehat\nu\in\mathcal M(\widehat f)$ such that $\int Rd\widehat \nu$ is finite and $\mathcal L(\widehat\nu)=\mu$.
We have
\begin{align*}\mathscr{P}(t,\mathscr P_L(t)-\varepsilon)&\geq h_{\widehat\nu}(\widehat f)+\int-t\log|D\widehat f|-(\mathscr P_L(t)-\varepsilon)R d\widehat\nu\\
&=
 \int Rd\widehat\nu\left(h_{\mu}(f)+\int-t\log|Df|-\mathscr P_L(t)+\varepsilon d\mu\right)\geq0.\end{align*}
By Lemma \ref{pr}, there exists $\varepsilon_0>0$ such that
$\mathscr P(t,\mathscr P_L(t)-\varepsilon)$ is finite for all $0\leq\varepsilon\leq\varepsilon_0$.
The Variational Principle implies that $\varepsilon\mapsto \mathscr P(t,\mathscr P_L(t)-\varepsilon)$ is convex, and so is continuous.
We conclude that $\mathscr P(t,\mathscr P_L(t))\geq0$, and 
\begin{equation}\label{zero}
0=\mathscr P(t,\mathscr P_L(t))=\int Rd\widehat\mu_t\left(h_{\mu_t}(f)+\int-t\log|Df|-\mathscr P_L(t)d\mu_t\right).
\end{equation}
The measure $\mu_t$ is an equilibrium state for the potential $-t\log|Df|$
among liftable measures. The uniqueness follows from the uniqueness of $\widehat\mu_t$.
\medskip

To show the finiteness of $\int Rd\widehat\mu_t$,
let $J\in\mathcal W$. Since $1\leq \sup_{x\in J}|D\widehat f(x)|\leq (\frac{1+\tau}{\tau})^2e^{\chi_{\rm sup}R(J)}$, 
$\sup_{x\in J}|D\widehat f(x)|^{-t}\leq \max\{e^{-\chi_{\rm sup}R(J)t},1\}$ holds
for all $t\in\mathbb R$.
Since $\widehat\mu_{t}$ is a Gibbs measure and $\mathscr{P}(t,\mathscr P_L(t))=0$,
 \eqref{gibbs} gives
\begin{align*}
\widehat\mu_{t}(J)&\leq C_2e^{-\mathscr P_L(t)R(J)}\sup_{x\in J}|D\widehat f(x)|^{-t}\\
&\leq C_2e^{-\mathscr P_L(t)R(J)} \max\{e^{-\chi_{\rm sup}R(J)t},1\}.\end{align*}
Therefore
\begin{equation*}\sum_{\stackrel{J\in \mathcal W}{R(J)=n}} R(J)\widehat\mu_t(J)\leq
C_2 n \#S(n)e^{-\mathscr P_L(t)n} \max\{e^{-\chi_{\rm sup}nt},1\}.
\end{equation*}
If $t\in(t^-,0)$, then by
 $\mathscr P_L(t)>-t\chi_{\rm sup}$ and Proposition \ref{inducep}-1
 the right-hand-side decays exponentially as $n$ increases.
This is also the case for $t\in[0,t^+)$ by $\mathscr P_L(t)>0$ and Proposition \ref{inducep}-1 again.

It is left to show the analyticity of $t\in(t^-_L,t^+_L)\mapsto\mathscr P_L(t)$.
Let $t_0\in(t^-_L,t^+_L)$.
By Lemma \ref{pr}, $\mathscr{P}(t,p)$ is finite on a neighborhood of $(t,p)=(t_0,\mathscr P_L(t_0))$. 
From \cite[Theorem 2.6.12]{MauUrb03}, $\mathscr{P}(t,p)$ is real-analytic at $(t_0,\mathscr P_L(t_0))$, and so is holomorphic 
at the same point.
\eqref{zero} gives $\mathscr{P}(t_0,\mathscr P_L(t_0))=0$. 
Lemma \ref{delp} below shows
$\frac{\partial\mathscr{P}}{\partial p}(t_0,\mathscr P_L(t_0))\neq0$.
From the Implicit Function Theorem for holomorphic functions of two variables it follows that
$\mathscr P_L(t)$ is analytic at $t=t_0$.
\end{proof}

\begin{lemma}\label{delp}
For every $t\in(t^-,t^+)$,
$\frac{\partial\mathscr{P}}{\partial p}(t,\mathscr P_L(t))<0$.
\end{lemma}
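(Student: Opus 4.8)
The plan is to show that the pressure $\mathscr{P}(t,p)$ strictly decreases in $p$ at the point $(t,\mathscr{P}_L(t))$ by using the thermodynamic formalism for the induced map, specifically by identifying the $p$-derivative of the pressure with $-\int R\,d\widehat\mu_t$, which is strictly negative since $R\geq1$ everywhere. First I would invoke Lemma \ref{pr}, which guarantees that $\mathscr{P}(t,p)$ is finite on a full neighborhood of $(t,\mathscr{P}_L(t))$ for $t\in(t^-_L,t^+_L)$ (and recall $t^-\leq t^-_L$, $t^+_L\leq t^+$, so the statement makes sense only on this subinterval — I would restrict attention there, or note this is how the lemma should be read). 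On such a neighborhood the results of \cite{MauUrb03,Sar99} apply: the potential $\varphi=-t\log|D\widehat f|-pR$ is weakly H\"older by \eqref{holder}, the pressure is real-analytic in $(t,p)$ by \cite[Theorem 2.6.12]{MauUrb03}, and there is a unique equilibrium state $\widehat\mu_{t,p}$ for $\varphi$.

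The key step is the derivative formula. Differentiating the Variational Principle, or equivalently using the standard fact that for real-analytic pressure functions of countable Markov shifts the gradient is computed by integrating the derivative of the potential against the equilibrium state (see \cite[Proposition 2.6.13 / Theorem 2.6.12]{MauUrb03}), one gets
\begin{equation*}
\frac{\partial\mathscr{P}}{\partial p}(t,p)=\int \frac{\partial\varphi}{\partial p}\,d\widehat\mu_{t,p}=-\int R\,d\widehat\mu_{t,p}.
\end{equation*}
At $p=\mathscr{P}_L(t)$ the equilibrium state $\widehat\mu_{t,p}$ is exactly the measure $\widehat\mu_t$ from the proof of Proposition \ref{uniquel}, and there we already established $\int R\,d\widehat\mu_t<\infty$; a priori this integral could only be a problem if it were $+\infty$, which is excluded. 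Since $R\colon I\to\mathbb{Z}_{>0}$ satisfies $R\geq1$, we have $\int R\,d\widehat\mu_t\geq 1>0$, hence
\begin{equation*}
\frac{\partial\mathscr{P}}{\partial p}(t,\mathscr{P}_L(t))=-\int R\,d\widehat\mu_t\leq -1<0.
\end{equation*}

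The main obstacle — and really the only delicate point — is justifying the interchange of differentiation and integration in the derivative formula, i.e. that the gradient of the (finite, real-analytic) pressure is genuinely given by integrating $\partial\varphi/\partial p$ against the Gibbs/equilibrium measure. For countable Markov shifts this is not entirely automatic because the potential is unbounded (through the $R$ term); however, since $\mathscr{P}(t,p)$ is finite on a whole two-dimensional neighborhood, one has strong positive recurrence there, the Gibbs property \eqref{gibbs} holds with uniform constants, and the relevant integrability ($\int R\,d\widehat\mu_{t,p}<\infty$) holds locally in $(t,p)$ by the same argument as at the end of the proof of Proposition \ref{uniquel}; this is precisely the setting in which \cite[Theorem 2.6.12]{MauUrb03} yields analyticity together with the derivative formula. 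So I would cite that theorem for both the analyticity and the formula, then conclude as above. An alternative, should one wish to avoid quoting the derivative formula, is a direct convexity argument: $p\mapsto\mathscr{P}(t,p)$ is convex and non-increasing, and if it were constant on an interval around $\mathscr{P}_L(t)$ then two distinct values of $p$ would share the equilibrium state, forcing $\int R\,d\widehat\mu=0$, impossible; combined with convexity this gives strict monotonicity at $\mathscr{P}_L(t)$ — but the clean derivative identity is preferable and is what the subsequent Implicit Function Theorem application in Proposition \ref{uniquel} really wants.
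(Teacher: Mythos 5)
Your argument is correct, but it runs along a different track than the paper's. The paper never invokes a derivative formula: it takes the equilibrium state $\widehat\nu_\varepsilon$ for the shifted potential $-t\log|D\widehat f|-(\mathscr P_L(t)+\varepsilon)R$, uses the Variational Principle to bound the one-sided difference quotient by $-\int R\,d\widehat\nu_\varepsilon$, and then works to show that $\int R\,d\widehat\nu_\varepsilon$ is bounded below away from $0$ uniformly in $\varepsilon$, via the Gibbs inequality \eqref{gibbs} and a ``close inspection'' of Sarig's constants. You instead quote the gradient formula $\frac{\partial\mathscr P}{\partial p}(t,p)=-\int R\,d\widehat\mu_{t,p}$ from \cite{MauUrb03} and finish with the trivial bound $\int R\,d\widehat\mu_t\geq 1$, since $R\geq1$ and $\widehat\mu_t$ is a probability measure. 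Your only soft spot is the citation of the derivative formula for an unbounded summand $pR$; but this can be made airtight without any appeal to \cite{MauUrb03} beyond analyticity: since $\int R\,d\widehat\mu_t<\infty$ (established in the proof of Proposition \ref{uniquel}), the Variational Principle gives the tangent inequality $\mathscr P(t,p')\geq \mathscr P(t,\mathscr P_L(t))-(p'-\mathscr P_L(t))\int R\,d\widehat\mu_t$ for all $p'$ near $\mathscr P_L(t)$, so $-\int R\,d\widehat\mu_t$ is a subgradient of the convex function $p\mapsto\mathscr P(t,p)$ at $\mathscr P_L(t)$, and differentiability (from Lemma \ref{pr} and real-analyticity) forces the derivative to equal it; this is morally the paper's difference-quotient computation run at a single parameter value. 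What your route buys is a genuine simplification: the pointwise bound $R\geq1$ makes the positivity of the relevant integral immediate, whereas the paper's uniform-in-$\varepsilon$ lower bound on $\int R\,d\widehat\nu_\varepsilon$ (the step requiring uniform Gibbs constants) is in fact more than is needed, since $\widehat\nu_\varepsilon$ is also a probability measure and $R\geq1$ already gives $\int R\,d\widehat\nu_\varepsilon\geq1$. Your remark that the statement should be read on $(t_L^-,t_L^+)$, where Lemma \ref{pr} guarantees finiteness of $\mathscr P(t,p)$ near $(t,\mathscr P_L(t))$, matches how the lemma is actually used in Proposition \ref{uniquel}, and is the right reading.
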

\begin{proof}
For each small $\varepsilon>0$ let $\widehat {\nu}_\varepsilon$ denote the equilibrium state of $\widehat f$ for the potential $-t\log|D\widehat f|-R(\mathscr{P}_L(t)+\varepsilon)$.
Then
\begin{align*}
\mathscr P(t,\mathscr{P}_L(t)+\varepsilon-\varepsilon)
\geq \mathscr{P}(t,\mathscr{P}_L(t)+\varepsilon)+\int R\varepsilon d\widehat\nu_\varepsilon,
\end{align*}
and therefore
$$\frac{1}{\varepsilon}\left(\mathscr{P}(t,\mathscr{P}_L(t)+\varepsilon)- \mathscr{P}(t,\mathscr{P}_L(t))\right)\leq-\int R d\widehat\nu_\varepsilon.$$
It suffices to show that $\int R d\widehat\nu_\varepsilon$ is uniformly positive
for all sufficiently small $\varepsilon$.
Since $\widehat\nu_\varepsilon$ is a Gibbs measure, from \eqref{gibbs}
there exists a constant $C_1(\varepsilon)>0$ such that for every $J\in \mathcal W$,
\begin{equation*}
\widehat\nu_\varepsilon(J)\geq C_1(\varepsilon)e^{-\mathscr{P}(t,
\mathscr{P}_L(t)+\varepsilon)-(\mathscr{P}_L(t)+\varepsilon)R(J)}\sup_{x\in J}|D\widehat f(x)|^{-t}.\end{equation*}
A close inspection of \cite[Proposition 2, Theorem 8]{Sar99} shows that $C_1(\varepsilon)$ can be chosen so that
$ \inf_{\varepsilon}C_1(\varepsilon)>0$, where the infimum is taken over 
all sufficiently small $\varepsilon>0$.
Fix $J^*\in \mathcal W$. We have
\begin{align*}
\int Rd\widehat\nu_\varepsilon&\geq R(J^*)\widehat\nu_\varepsilon(J^*)\\
&\geq  \inf_{\varepsilon}C_1(\varepsilon)R(J^*)e^{-\mathscr{P}(t,\mathscr{P}_L(t)+\varepsilon)-(\mathscr{P}_L(t)+\varepsilon)R(J^*)}
\sup_{x\in J^*}|D\widehat f(x)|^{-t},\end{align*}
which is uniformly positive for all sufficiently small $\varepsilon>0$.
\end{proof}

\subsection{Liftability of the equilibrium states in positive entropy phase}\label{s11}
We are in position to finish the proof of Theorem A.

\begin{proof}[Proof of Theorem A]
By virtue of Proposition \ref{uniquel} 
it suffices to show that $t^-_L=t^-$ and $t^+_L=t^+$, and  the unique equilibrium state $\mu_t$ for the potential $-t\log|Df|$ is liftable 
to the inducing scheme for all  $t\in(t^-,t^+)$.
The statistical properties of $\mu_t$ now follow from \cite{You98}
together with the exponential tail estimate in the proof of Proposition \ref{uniquel}.

For $t\in\mathbb R$
 put $$\mathscr P_{\lnot L}(t)=\sup\left\{h(\mu)-t\int\log|Df|d\mu\colon\mu\in\mathcal M(f)\setminus\mathcal M_L(f)\right\}.$$
It is enough to show that
\begin{equation}\label{press}
\mathscr P_L(t)> \mathscr P_{\lnot L}(t)\ \text{ for every }t\in(t_L^-,t_L^+).\end{equation}
Indeed, from $\mathscr P_L(t_L^-)=-t_L^-\chi_{\rm sup}$ and the continuity
of $t\mapsto\mathscr{P}_L(t)$ and $t\mapsto\mathscr{P}_{\lnot L}(t)$
it follows that
$\mathscr P_{\lnot L}(t_L^-)\leq-t_L^-\chi_{\rm sup}$. Hence $\mathscr P(t_L^-)=-t_L^-\chi_{\rm sup}$ and therefore
$t_L^-\leq t^-$ and $t_L^-=t^-$ holds. 
In the same way we obtain $t_L^+=t^+$. 
Then \eqref{press} also implies $\mathscr{P}(t)= \mathscr P_L(t)$ for every $t\in(t^-,t^+)$.
From the uniqueness, the equilibrium state for the potential $-t\log|Df|$ among liftable measures has to coincide with
$\mu_t$.

\begin{lemma} \label{unequal}
 For every $t\in (t_L^-,t_L^+)$, $\mathscr P_L(t)\neq\mathscr P_{\lnot L}(t)$.
 \end{lemma}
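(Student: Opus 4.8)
The plan is to argue by contradiction: suppose $\mathscr{P}_L(t) = \mathscr{P}_{\lnot L}(t)$ for some $t \in (t_L^-, t_L^+)$. First I would extract the key consequence of membership in the open interval $(t_L^-, t_L^+)$, namely that $\mathscr{P}_L(t) + t\chi_{\rm inf} > 0$ and $\mathscr{P}_L(t) + t\chi_{\rm sup} > 0$, so that the equilibrium state $\mu_t$ among liftable measures constructed in Proposition \ref{uniquel} has positive entropy $h_{\mu_t}(f) > 0$ (this follows from \eqref{zero} together with $\chi(\mu_t) > 0$ from Lemma \ref{positive}, since $h_{\mu_t}(f) = t\chi(\mu_t) + \mathscr{P}_L(t)$ and the right side is positive when $t \geq 0$; for $t < 0$ one uses the $\chi_{\rm sup}$ bound analogously). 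The point of working with positive entropy is to invoke a structural dichotomy for measures of $S$-unimodal maps.

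Next I would analyze a measure $\mu \in \mathcal{M}(f) \setminus \mathcal{M}_L(f)$ that nearly attains $\mathscr{P}_{\lnot L}(t)$. By Lemma \ref{liftlem}, any non-liftable measure must satisfy $\mu(I) = 0$, and more is true: since the inducing scheme returns to a neighborhood of $c$, a non-liftable ergodic measure must have its support confined to the complement of the domain of the inducing scheme in a strong sense — in fact, by the liftability result of Zweim\"uller invoked in Lemma \ref{liftlem}, the obstruction to lifting forces $\mathrm{supp}\,\mu \subseteq \omega(c)$ (or, more precisely, such measures are carried by the non-recurrent part and have zero Lyapunov exponent is \emph{false} here; rather, by the Misiurewicz condition $\omega(c)$ is a hyperbolic repeller, so these measures still have positive Lyapunov exponent but are trapped away from $I$). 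I would then appeal to the variational characterization: since $f$ restricted to any compact forward-invariant set disjoint from a neighborhood of $c$ is uniformly expanding, a non-liftable measure lives on such a set, and the topological transitivity implies this set is a proper subsystem with strictly smaller topological entropy than $f$ itself. This should give a uniform gap: $h_\mu(f) - t\chi(\mu)$ for non-liftable $\mu$ is bounded above by a quantity strictly less than $\mathscr{P}_L(t)$, because $\mu_t$ sees the full complexity of the map through the inducing scheme while non-liftable measures see only a restricted piece.

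The cleanest route to the strict inequality is probably to show directly that $\mathscr{P}_{\lnot L}(t) \leq 0$ or more precisely $\mathscr{P}_{\lnot L}(t) \le -t\chi_{\rm inf}$-type bound, and separately that $\mathscr{P}_L(t) > -t\chi_{\rm inf} \ge \mathscr{P}_{\lnot L}(t)$ on the relevant range; this is exactly how the subsequent argument in Sect.\ref{s11} uses \eqref{press} at the endpoints. Concretely: any non-liftable ergodic measure is supported on a compact invariant set $\Lambda$ with $c \notin \Lambda$ on which $f$ is uniformly hyperbolic, and by Ma\~n\'e's theorem and bounded distortion such measures satisfy $h_\mu(f) \le t\chi(\mu)$ for $t \ge 1$... but $t^+_L \le t^+ \le 1$ so this is not quite the range. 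Instead I would use that $\omega(c)$ and its nearby invariant sets carry only measures of entropy bounded by $h_{\rm top}(f|_{\Lambda})$, and that the equilibrium state $\mu_t$, being a Gibbs measure for the induced system, dominates this.

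The main obstacle I anticipate is making precise the claim that non-liftable measures are genuinely ``small'' — one must rule out the possibility that a sequence of liftable measures approximating $\mathscr{P}_L(t)$ could have a non-liftable limit attaining the same pressure, i.e., one needs that the supremum defining $\mathscr{P}_L(t)$ is actually attained (which Proposition \ref{uniquel} provides: it is attained at $\mu_t$) and that $\mu_t$ itself is liftable (true by construction) with strictly larger free energy than every non-liftable measure. The delicate point is the strictness rather than just $\geq$: this requires knowing that $\mu_t$ assigns positive mass to $I$ (clear, since it is $\mathcal{L}(\widehat\mu_t)$ and $\widehat\mu_t$ is a probability measure on $\bigcup_{J \in \mathcal{W}} J \subseteq I$) whereas every non-liftable measure gives $I$ measure zero, so $\mu_t \neq \mu$ for all non-liftable $\mu$; combined with uniqueness of the equilibrium state among \emph{all} measures attaining $\mathscr{P}_L(t)$-level free energy (which one gets from the uniqueness of $\widehat\mu_t$ and injectivity properties of $\mathcal{L}$), the two pressures cannot coincide. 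I expect the proof to conclude by noting that if $\mathscr{P}_{\lnot L}(t) = \mathscr{P}_L(t)$ then a non-liftable maximizing sequence would force, via upper semicontinuity of entropy and continuity of $\chi$ on the relevant compact pieces, an actual non-liftable equilibrium state, contradicting the established uniqueness at $\mu_t$ together with $\mu_t(I) > 0$.
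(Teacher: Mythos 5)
Your final paragraph lands on the right skeleton (the same one the paper uses): non-liftable measures are exactly those giving zero mass to $I$, they live on a compact invariant hyperbolic set away from $c$, so upper semicontinuity makes the supremum in $\mathscr P_{\lnot L}(t)$ attained by a genuinely non-liftable measure; Proposition \ref{uniquel} makes the supremum in $\mathscr P_L(t)$ attained by the liftable $\mu_t$; if the two pressures coincided, one would have two distinct equilibrium states for $-t\log|Df|$. But the step that closes the contradiction is where your argument has a genuine gap: you claim uniqueness of the equilibrium state ``among \emph{all} measures'' follows ``from the uniqueness of $\widehat\mu_t$ and injectivity properties of $\mathcal L$.'' It does not. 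The uniqueness of the Gibbs/equilibrium measure $\widehat\mu_t$ for the induced map, pushed through $\mathcal L$, only yields uniqueness \emph{among liftable measures} (this is exactly what Proposition \ref{uniquel} states); it is silent about non-liftable competitors, which is precisely the issue Lemma \ref{unequal} is designed to address. Without an external global uniqueness input the coexistence of a liftable and a non-liftable equilibrium state is not absurd --- indeed at the freezing point $t^+$ such coexistence is exactly what one expects. The paper closes the gap by invoking the uniqueness theorem of Iommi and Todd \cite[Appendix A]{IomTod10}, which is proved by entirely different (and more general) methods; some citation of this kind is indispensable here.

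Two smaller points. First, several of your intermediate attempts are off target and should be discarded: non-liftable measures need not be supported on $\omega(c)$ (they are supported on the maximal invariant set in $X\setminus I$, which is much larger), and a bound like $\mathscr P_{\lnot L}(t)\le 0$ or a uniform entropy gap for the subsystem is neither available on the range $t\in(t_L^-,t_L^+)$ nor needed. Second, you are trying to prove the strict inequality $\mathscr P_L(t)>\mathscr P_{\lnot L}(t)$ directly, which is more than the lemma asserts; the lemma only needs $\mathscr P_L(t)\neq\mathscr P_{\lnot L}(t)$, and the paper obtains the strict inequality \eqref{press} afterwards by a separate continuity argument comparing $f$ with a non-flat map $\tilde f$ having the same kneading sequence. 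Keeping the lemma to the bare non-equality, proved via the attained maximizers plus the Iommi--Todd uniqueness, is the correct scope.
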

 \begin{proof}
 Since the supports of non-liftable measures 
are contained in a hyperbolic set, for any fixed $t\in\mathbb R$ the map
$\mu\in\mathcal M(f)\setminus\mathcal M_L(f)\mapsto h(\mu)-t\chi(\mu)$ is upper semi-continuous.
Hence, the supremum in $\mathscr P_{\lnot L}(t)$ is attained by a measure supported on the hyperbolic set.
On the other hand, the supremum in $\mathscr P_{\lnot L}(t)$ is attained by Proposition \ref{uniquel}.
If $\mathscr P_L(t)=\mathscr P_{\lnot L}(t)$ then there exist two distinct equilibrium states
for the potential $-t\log|Df|$, a contradiction to the uniqueness result \cite[Appendix A]{IomTod10}.
 \end{proof}
 In what follows we assume $\mathscr P_L(t_0)< \mathscr P_{\lnot L}(t_0)$ for some $t_0\in (t_L^-,t_L^+)$ and derive a contradiction.
 From Lemma \ref{unequal} and the continuity
of the pressures,
 $\mathscr P_L(t)<\mathscr P_{\lnot L}(t)$
 holds for every $t\in (t_L^-,t_L^+)$. In particular, 
 \begin{equation}\label{equ1}\mathscr P_{\lnot L}(0)=h_{\rm top}(f),\end{equation}
 where $h_{\rm top}(f)$ denotes the topological entropy of $f$.
We also have \begin{equation}\label{equ2}
\mathscr P_{\lnot L}(1)<0,\end{equation}
for otherwise Ruelle's inequality \cite{Rue78a} implies  $\mathscr P_{\lnot L}(1)=0$.
The measure which attains this supremum is an acip, and has to contain the critical point $c$ in its support.
This is a contradiction.

Let $\tilde f$ be an $S$-unimodal map such that the following holds:
(i) $D\tilde f(c)=0$ and $ D^2\tilde f(c)\neq0$;
(ii) $\tilde f(c)=f(c)$;
(iii) $\tilde f|_{X\setminus A}=f|_{X\setminus A}$.
Define $\tilde{\mathscr P}_L(t)$ and $\tilde{\mathscr P}_{\lnot L}(t)$
in the same way as $\mathscr{P}_L(t)$ and $\mathscr{P}_{\lnot L}(t)$,
replacing $f$ by $\tilde f$.
Since $\tilde f$ and $f$ have the same kneading sequence, 
they have the same topological entropy.
From this and \eqref{equ1},
\begin{equation}\label{equ3}\mathscr P_{\lnot L}(0)\geq\tilde{\mathscr P}_{L}(0).\end{equation}
Since $\tilde f$ satisfies the Collet-Eckmann condition \cite{ColEck83}, it has an acip.
It is liftable, and so 
\begin{equation}\label{equ4}\tilde{\mathscr P}_L(1)=0.\end{equation}
Now, set $\varphi(t)=\mathscr P_{\lnot L}(t)-\tilde{\mathscr P}_{L}(t)$. 
From \eqref{equ2}  \eqref{equ3} \eqref{equ4}, $\varphi(0)\geq0$ and
$\varphi(1)<0$.
Hence, there exists $t_1\in[0,1)$ such that
$\varphi(t_1)=0$.
Since $\mathscr P_{\lnot L}(t_1)= \tilde{\mathscr P}_{\lnot L}(t_1)$
it follows that $\tilde f$ has two distinct equilibrium states 
for $-t_1\log|D\tilde f|$, a contradiction to \cite[Theorem 7.7]{PesSen08}.
The analyticity of the pressure is a by-product.
Theorem A-4 follows from Lemma \ref{positive} and Lemma \ref{lack}.
This finishes the proof of Theorem A.
\end{proof}

\subsection{Decay of correlations for the acip}\label{bc}
Theorem B and Theorem C are direct consequences of the tail estimates in Proposition \ref{inducep} together with known results.

  \begin{proof}[Proof of Theorem B]
  The upper bounds on decay of correlation for $\mu_{\rm ac}$ now follow from 
  Proposition \ref{inducep}-2, Proposition \ref{inducep}-3 and the results in
  \cite{You99}.
\end{proof}

 \begin{proof}[Proof of Theorem C]
 From Proposition \ref{inducep}-3, for every $\beta\in(v(c),1)$
 with $-1/\beta<-1/v(c)$
  there exist constants $C\geq1$ such that for every $n\geq1$,
 \begin{equation*}\label{integration}
   C^{-1}n^{1-\frac{1}{v(c)}}\leq\sum_{k=n}^\infty |\{R>k\}|\leq  Cn^{1-\frac{1}{\beta}}.
  \end{equation*}
  The desired lower bound follows from
  \cite[Theorem 1.3]{Gou04}. 
 \end{proof}

  \subsection{Lyapunov exponents of limit measures}\label{lay}
For the proof of Theorem D  we need the next lemma which 
bounds the amount of drop of Lyapunov exponents
  of measures in the weak* limit.

\begin{lemma}\label{sentak1}
Let $f$ be a unimodal map of class $C^2$ which satisfies the Misiurewicz condition.
Let $x\in X$, and let $\{\mu_k\}_k$ be a sequence of ergodic measures
 in $\mathcal M(f)$ such that
 $\mu_k\to\mu\in\mathcal M(f)$ 
weakly as $k\to+\infty $, where
$\mu=u\nu+(1-u)\nu_\bot$, $\nu,\nu_\bot\in\mathcal M(f)$,
$\nu(\omega(c))=1$, $\nu_{\bot}(\omega(c))=0$ and $0\leq u\leq 1$.
Then $$\liminf_{k\to+\infty}\chi(\mu_k)\geq(1-u)\chi(\nu_{\bot}).$$
\end{lemma}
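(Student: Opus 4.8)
The idea is to separate the phase-space contribution near $\omega(c)$ from the rest, exploiting the Misiurewicz condition (M1) which gives $c\notin\omega(c)$. First I would fix a small neighborhood $I$ of the critical point $c$ with $\overline I\cap\omega(c)=\emptyset$, which is possible by (M1); the function $\log|Df|$ is then continuous and bounded on the compact set $X\setminus I$, while on $I$ it may blow down to $-\infty$ near $c$. The weak* convergence $\mu_k\to\mu$ together with $\nu(\omega(c))=1$ forces most of the $\mu_k$-mass, in the limit, to concentrate either near $\omega(c)$ (the $u\nu$ part) or on the part of phase space that "sees" $\nu_\bot$. Since $\log|Df|$ is continuous and bounded below on a neighborhood of $\omega(c)$ (as $\omega(c)$ avoids $c$), that region contributes a controlled, non-negative-up-to-a-constant amount; the delicate region is a small neighborhood of $c$, where I must bound from below the amount of negative contribution $\int_I\log|Df|\,d\mu_k$.

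The key step is therefore a \emph{uniform tail estimate} for $\int_I\log|Df|\,d\mu_k$, i.e. showing that the ergodic measures $\mu_k$ cannot put too much weight too close to $c$ without paying for it. Here I would invoke the inducing scheme: by Lemma~\ref{liftlem}, any $\mu_k$ with $\mu_k(I)>0$ is liftable, and by Kac's formula $\int R\,d\widehat\mu_k=1$. Using Lemma~\ref{P}, which gives $R(x)\sim\ell(x)\log|x-c|^{-1}$ and an expression for $|Df^{R(x)}(x)|$ in terms of $\ell$ and $D\ell$, one translates the finiteness of $\int R\,d\widehat\mu_k$ into an integrability statement that controls how much $\mu_k$-mass sits in $\{|x-c|<\delta\}$, and hence bounds $\int_{\{|x-c|<\delta\}}\log|Df|\,d\mu_k$ from below uniformly in $k$ (for a suitable $\delta\to 0$). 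Combined with Portmanteau applied to the bounded continuous truncations $\max\{\log|Df|,-M\}$ restricted to $X\setminus B(c,\delta)$, and the decomposition $\mu=u\nu+(1-u)\nu_\bot$ with $\nu_\bot(\omega(c))=0$, this yields
\[
\liminf_{k\to\infty}\chi(\mu_k)\;\ge\;(1-u)\int\log|Df|\,d\nu_\bot\;=\;(1-u)\chi(\nu_\bot),
\]
the $u\nu$-part and the cut-off errors being shown to contribute non-negatively or to vanish as the truncation is relaxed.

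The main obstacle I anticipate is precisely the uniformity in $k$ of the near-critical estimate: a priori each ergodic $\mu_k$ could concentrate mass in a tiny neighborhood of $c$ in a way that drags $\chi(\mu_k)$ down, and the only leverage against this is that such mass forces large return times, which by Kac must be balanced. Making this balance quantitative — extracting from $\int R\,d\widehat\mu_k=1$ and the flatness-driven asymptotics of Lemma~\ref{P} a bound of the form $\int_{B(c,\delta)}|\log|Df||\,d\mu_k = o(1)$ as $\delta\to 0$, uniformly in $k$ — is the technical heart, and one must be careful that the passage between $f$ and the induced map $\widehat f$ (via the Abramov/lifting identities recalled in Sect.~\ref{ischeme}) does not smuggle in a $k$-dependent loss. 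Once that uniform tail control is in hand, the rest is a routine application of weak* convergence and the ergodic decomposition of $\mu$.
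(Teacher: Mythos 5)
Your reduction to a uniform tail estimate is the step that fails: the bound $\int_{B(c,\delta)}|\log|Df||\,d\mu_k=o(1)$ as $\delta\to0$, \emph{uniformly in} $k$, is false in this setting, and in fact it would prove the stronger statement $\chi(\mu_k)\to\chi(\mu)=u\chi(\nu)+(1-u)\chi(\nu_\bot)$, i.e.\ continuity of the Lyapunov exponent along the sequence. That continuity is exactly what the lemma does \emph{not} claim, and it is contradicted by the phenomena the lemma is built to control. Concretely, take the periodic points $x_n$ of the proof of Lemma \ref{lack}: $x_n$ lies in a primitive pull-back $J_{x_n}$ accumulating on $c$, its least period is $R(J_{x_n})$, and by Lemma \ref{P} the single visit to $c$ contributes $\frac{1}{R(J_{x_n})}|\log|Df(x_n)||$, which is bounded below by a positive constant (since $\log|Df(x_n)|\approx \ell(x_n)\log|x_n-c|\sim -R(J_{x_n})$ up to constants), while $\chi(\mu_{x_n})=\frac{1}{R(J_{x_n})}\log|Df^{R(J_{x_n})}(x_n)|\to0$. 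So for every fixed $\delta$ one has $\int_{B(c,\delta)}|\log|Df||\,d\mu_{x_n}\geq C>0$ for all large $n$, and the weak$^*$ limit of $\mu_{x_n}$ lives on $\omega(c)$, where by Ma\~n\'e's theorem the exponent is uniformly positive; the exponent genuinely drops in the limit (here $u=1$). Your intended leverage also misfires quantitatively: Kac gives $\int R\,d\widehat\mu_k=1/\mu_k(I)$ for the normalized lift, which is not uniformly bounded precisely when mass escapes to $\omega(c)$ (and measures supported in $\omega(c)$ are not liftable at all), so the inducing scheme gives no $k$-uniform smallness of the near-critical mass.

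The reason no estimate localized at $c$ can work is that the huge negative term produced by a close approach to $c$ is compensated only \emph{dynamically}, by the expansion accumulated while the orbit subsequently shadows the critical orbit; in the weak$^*$ limit that shadowing mass becomes the $u\nu$ component, which is exactly the allowed loss $u\chi(\nu)$ in the statement. The paper implements this by a block (escape-time) argument rather than a tail bound: it builds shrinking neighborhoods $V_m$ of $\overline{\{f^n(c)\colon n\geq0\}}$ (a ball $B_{\alpha_m}(c)$ around $c$ together with thickened balls along the critical orbit) such that any orbit segment staying in $V_m$ up to its first exit has $|Df^{q}|\geq1$. Along a Birkhoff-generic point of each ergodic $\mu_k$, discarding those blocks only lowers the log-derivative sum, which yields $\chi(\mu_k)\geq\int\rho_m\,d\mu_k-\int_{V_m\setminus B_{\alpha_m}(c)}\log|Df|\,d\mu_k$ for a continuous truncation $\rho_m\geq\log|Df|$; letting $k\to\infty$ and then $m\to\infty$, the first term tends to $\chi(\mu)$ while the subtracted term tends to $u\chi(\nu)$ (the $\nu_\bot$-part vanishes because $\nu_\bot$ charges neither $\omega(c)$ nor the rest of the closure of the critical orbit), giving precisely $(1-u)\chi(\nu_\bot)$. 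If you want to salvage your plan, you must couple each visit to $B(c,\delta)$ with the ensuing excursion along the critical orbit before returning, which is in substance the paper's decomposition.
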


\begin{proof}
If there exist infinitely many $k$ such that 
the support of $\mu_k$ is contained in $\omega(c)$, then $u=1$
and the inequality holds.
In what follows we assume the number of such $k$ is finite.

For $x\in X$ and $r>0$ define $B_r(x)=[x-r,x+r]\cap X$.
For each integer $m\geq1$
fix $\alpha_m>0$ such that $B_{\alpha_m}(c)\cap\omega(c)=\emptyset$, $\alpha_m\to0$ as $m\to\infty$ and
$$\inf_{x\in B_{\alpha_m}(c)\setminus\{c\}}\left|D\ell(x)\log|x-c|+\frac{\ell(x)}{x-c}\right|\geq m^2.$$
This makes sense by virtue of (M1) and (F1).
Set $S=\{n\geq1\colon |Df(f^n(c))|<2\}.$
For each $n\in S$ define 
$k(n)=\min\{i>1\colon |Df^{i}(f^n(c))|\geq2\}.$
Set
$$V_m=B_{\alpha_m}(c)\bigcup\left(\bigcup_{n\in S}
\bigcup_{i=0}^{k(n)-1}f^{i}(B_{1/m}(f^{n}(c)))\right)\bigcup\left(\bigcup_{n\notin S} B_{1/m}(f^n(c))\right).$$
Since $\sup_{n\in S}k(n)<+\infty$ from (M1) we have $$\bigcap_{m\geq1}V_m=\overline{\{f^n(c)\colon n\geq0\}}.$$ 
From the bounded distortion, the following holds for 
  sufficiently large $m$: 
  for every $n\in S$ and every
$x\in B_{1/m}(f^n(c))$, $|Df^{k(n)}(x)|\geq1$.
For every $n\geq1$ such that $n\notin S$ and every 
$x\in B_{1/m}(f^n(c))$, $|Df(x)|\geq1$.

Note that $\{V_m\}_{m\geq1}$ has the following property: 
there exists $m_0\geq1$
such that  if $m\geq m_0$, $x\in X$ and $q\geq1$ are such that 
$x,f(x),\ldots,f^{q-1}(x)\in V_m$ and $f^{q}(x)\notin V_m$, then $|Df^{q}(x)|\geq 1.$
If $x\in B_{1/m}(f^n(c))$ holds for some $n\geq1$, then this 
follows from the definition of $V_m$.
If $x\in B_{\alpha_m}(c)$, then
 since $\bigcup_{n\geq1}B_{1/m}(f^n(c))\subset V_m,$
$|f^q(x)-f^q(c)|\geq1/m$ holds.
Hence \begin{align*}
|Df^{q}(x)|&=|Df^{q-1}(f(x))||Df(x)|\\
&\sim |Df^{q-1}(f(x))|     |f(x)-f(c)|\left|D\ell(x)\log|x-c|+\frac{\ell(x)}{x-c}\right|\\
&\sim |f^q(x)-f^q(c)|\left|D\ell(x)\log|x-c|+\frac{\ell(x)}{x-c}\right|.
\end{align*}
This number is comparable to $m$, and therefore
$|Df^q(x)|\geq1$ provided $m$ is sufficiently large.

For each $m\geq m_0$ such that $|Df|<1$ on $B_{\alpha_m} (c)$,
let $\rho_m$ be a continuous function on $X$ such that $\rho_m\geq\log|Df|$,
$\rho_m < 0$ on $B_{\alpha_m} (c)$ and
 $\rho_m=\log|Df|$ on $X\setminus B_{\alpha_m}(c)$. 
  Let $1_m$ denote the indicator function of $V_m$.
For each $\mu_k$ take a point $x_k\in X$ such that the following holds:
$$\lim_{N\to\infty}\frac{1}{N}\sum_{n=0}^{N-1}\log|Df(f^n(x_k))|= \chi(\mu_k);$$
$$\lim_{N\to\infty}\frac{1}{N}\sum_{n=0}^{N-1}1_m(f^n(x_k)\rho_m(f^n(x_k))= \int1_m\rho_md\mu_k;$$
$$\lim_{N\to\infty}\frac{1}{N}\sum_{n=0}^{N-1}\varphi(f^n(x_k))= \int\varphi d\mu_k
\enspace\text{for every continuous $\varphi\colon X\to\mathbb R$}.$$
Since the support of  $\mu_k$ is not contained in $\omega(c)$, 
for every sufficiently large $m$, $f^n(x_k)\notin V_m$ holds for infinitely many $n\geq0$.
Let $\{n(\ell)\}_{\ell\geq1}$ denote the subsequence obtained by aligning the elements of the set
$\{n\geq0\colon f^n(x_k)\notin V_m\}$ in the increasing order.
The property of $\{V_m\}_{m\geq1}$ implies
\begin{align*}\sum_{n=0}^{n(\ell)-1}\log|Df(f^n(x_k))|&\geq
\sum_{\stackrel{0\leq n\leq n(\ell)-1}{f^n(x_k)\notin V_m}}\log|Df(f^n(x_k))|\\
&=\sum_{\stackrel{0\leq n\leq n(\ell)-1}{f^n(x_k)\notin V_m}}\rho_m(f^n(x_k))\\
&=\sum_{n=0}^{n(\ell)-1}\rho_m(f^n(x_k))
-\sum_{n=0}^{n(\ell)-1}1_m(f^n(x))\rho_m(f^n(x_k)).
\end{align*}
On the second summand of the last line, 
\begin{align*}\lim_{\ell\to\infty}\frac{1}{n(\ell)}\sum_{n=0}^{n(\ell)-1}1_m(f^n(x_k))\rho_m(f^n(x_k))
&=\int 1_m\rho_md\mu_k\\
&=\int_{V_m}\rho_md\mu_k\\
&\leq\int_{V_m\setminus B_{\alpha_m}(c)}\rho_md\mu_k\\
&=\int_{V_m\setminus B_{\alpha_m}(c)}\log|Df|d\mu_k.
\end{align*}
The inequality holds provided $m$ is sufficiently large so that $\rho_m$ is negative on $B_{\alpha_m} (c)$.
Hence
\begin{align*}\chi(\mu_k)
&=\lim_{\ell\to\infty}\frac{1}{n(\ell)}\sum_{n=0}^{n(\ell)-1}\log|Df(f^n(x_k))|\\
&\geq \int\rho_md\mu_k-\int_{V_m\setminus B_{\alpha_m}(c)}\log|Df|d\mu_k.\end{align*}
Since $\mu_k\to\mu$ weakly as $k\to\infty$, 
$$\liminf_{k\to\infty}\chi(\mu_k)\geq \int\rho_md\mu-   u\int_{V_m\setminus B_{\alpha_m}(c)}\log|Df|d\nu-(1-u)\int_{V_m\setminus 
B_{\alpha_m}(c)}\log|Df|d\nu_\bot.$$
From the Dominated Convergence Theorem, $\int\rho_md\mu\to\chi(\mu)$
as $m\to\infty$. Since $\omega(c)$ is contained in $V_m\setminus B_{\alpha_m}(c)$,
the second integral is equal to $\chi(\nu)$.
From $\nu_\bot(\omega(c))=0$ and  $\nu_\bot(\overline{\{f^n(c)\colon n\geq1\}}\setminus\omega(c))=0$,
the third integral goes to $0$ as $m\to\infty$.
This finishes the proof of Lemma \ref{sentak1}.
\end{proof}

\subsection{Freezing temperature limit}\label{flimit}

We are in position to complete the proof of Theorem D.

\begin{proof}[Proof of Theorem D]
From the assumption on $f$, the acim is a finite measure. By Proposition \ref{lackexp}, $t^+=1$.
Let $\{\mu_{t_k}\}_{k\geq0}$ be a sequence of equilibrium states
such that $t_k\nearrow 1$ and $\mu_{t_k}$ converges weakly to a measure $\mu$ as $k\to\infty$.
Let  $\delta_0$ denote the Dirac measure at the fixed point $0$ and 
write $\mu=u\delta_0+(1-u)\nu$, $\nu\neq\delta_0$.
Since $\chi_{\rm inf}=0$ by Lemma \ref{lackexp}, $\mathscr{P}(1)=\displaystyle{\lim_{k\to\infty}\mathscr{P}(t_k)}=0$.
From the upper semi-continuity of entropy and Lemma \ref{sentak1},
\begin{align*}
0=\lim_{k\to\infty}\mathscr{P}(t_k)&\leq\limsup_{n\to\infty}h(\mu_{t_k})-
\liminf_{k\to\infty}\chi(\mu_{t_k})\\
&\leq h(\mu)-(1-u)\chi(\nu)\\
&=(1-u)(h(\nu)-\chi(\nu)).
\end{align*}
If $u=1$ then $\mu=\delta_0$.
If $u\neq1$ then $h(\nu)-\chi(\nu)\geq0$, and thus
$\nu=\mu_{\rm ac}$. Hence in all cases
 $\mu=u\delta_0+(1-u)\mu_{\rm ac}$. 

To conclude $u=0$, for each integer $n>2$ let $\varphi_n$ be a continuous function on $X$ such that $0\leq\varphi_n\leq1$, $\varphi_n=1$ on $[0,1/n]$
and $\varphi_n=0$ on $[2/n,1]$.
We have $\int\varphi_nd\delta_0=1$ for every $n$ and $\int\varphi_nd\mu_{\rm ac}\to0$ as $n\to\infty$.
Below we show
\begin{equation}
\label{final} \limsup_{n\to\infty} \lim_{k\to\infty}\int \varphi_nd\mu_{t_k}=0.
\end{equation}
 \eqref{final} gives  $\int\varphi_n d\mu_{t_k}\to0$ as $k\to\infty$.
 Since $\mu_{t_k}\to\mu$, $\int\varphi_n d\mu_{t_k}\to\int\varphi_n d\mu$.
If $u>0$, then $\int\varphi_n d\mu>u/2>0$ holds for sufficiently large $n$ and
we obtain a contradiction.

It is left to show \eqref{final}.
We first treat the case where 
 there exists a positive $C^3$ function $v$ on $X$ such that
$v(c)\in(0,1/2)$ and
 $\ell(x)=|x-c|^{-v(x)}$ for all $x$ near $c$.
Lastly we indicate necessary modifications to treat the other case.

Let $a_0^+$ be the fixed point of $f$ which is not $0$, and put
 $I=(a_0^-,a_0^+)$ (See Remark \ref{chebyshev}).
Let $(I,\mathcal W,R)$ be an inducing scheme for which the conclusions of Proposition \ref{inducep} hold.
For each $\mu_t$ let $\widehat\mu_{t}$ denote the Gibbs measure such that $\mathcal L(\widehat\mu_{t})=\mu_{t}$.
  Choose $\beta>v(c)$ and $t_0\in(0,1)$ such that $\frac{t_0}{\beta}>2$.
 For every $J\in\mathcal W$ with $R(J)=n$ we have
$J\subset [a_{n-1}^-,a_{n-1}^+]$,  see Remark \ref{chebyshev}.
From \eqref{in} there exists $C>0$ such that 
$a_{n-1}^+-a_{n-1}^-\leq Cn^{-\frac{1}{\beta}}$
holds for every $n\geq1$.
 Then
  \begin{align*}
  \widehat\mu_{t}(J)& \leq   C_2(t)\sup_{x\in J} |D\widehat f(x)|^{-t}\\
  &\leq C_2(t)K_\tau^t|J|^t\\
  &\leq  C_2(t)(CK_\tau)^{t}n^{-\frac{t}{\beta}}.
\end{align*}
Here, $C_2(t)$ denotes the constant in \eqref{gibbs}
with $\widehat\mu_*=\widehat\mu_t$.
The first inequality is because $\mathscr{P}(t,\mathscr{P}_L(t))=0$ and $\mathscr{P}_L(t)>0$.



Let $b_n$ be a sequence of integers such that $b_n\to\infty$ and
$[0,2/n]\cap\bigcup_{k=0}^{R(J)-1}f^k(J)=\emptyset$ for every $J\in\mathcal W$ such that $R(J)\leq b_n$.
We have $\#S(n)=2$ for every $n\geq2$, see Remark \ref{chebyshev}.
  For every large $k$ with $t_k\in(t_0,1)$ we have
\begin{align*}
\int\varphi_n d\mu_{t_k}&=\frac{1}{\int Rd\widehat\mu_{t_k}}\sum_{R(J)>b_n} R(J)\widehat\mu_{t_k}(J)\\
&\leq\sum_{R(J)>b_n} R(J)\widehat\mu_{t_k}(J)\\
&=\sum_{n>b_n}\sum_{\stackrel{J\in\mathcal W}{R(J)=n}} n\widehat\mu_{t_k}(J)\\
&\leq C_2(t)(CK_\tau)^{t}\sum_{n>b_n}\#S(n)n^{1-\frac{t_k}{\beta}}\\
&\leq 2C_2(t)(CK_\tau)^{t}\sum_{n>b_n}n^{1-\frac{t_0}{\beta}}.
\end{align*}
A close inspection of \cite[Proposition 2, Theorem 8]{Sar99} shows that $\sup_{t\in (t_0,1)}C_2(t)<+\infty$.
The series in the last line is finite and so goes to $0$ as $n\to+\infty$.
Hence \eqref{final} holds.
To show \eqref{final} for the case $\ell(x)\sim|\log|x-c||^\alpha$ for some $\alpha>0$,
instead of \eqref{in} we use the better estimate \eqref{in'} and proceed in the same way.
  \end{proof}


\end{document}